\documentclass[journal,transmag]{IEEEtran}

\usepackage[utf8]{inputenc} 
\usepackage[T1]{fontenc}    
\usepackage{hyperref}       
\usepackage{url}            
\usepackage{booktabs}       
\usepackage{amsfonts}       
\usepackage{nicefrac}       
\usepackage{microtype}      
\usepackage{xcolor}         
\usepackage{graphicx}
\usepackage{amsmath}
\usepackage{bm}
\usepackage{bbm}
\usepackage{dsfont}
\usepackage{xcolor}
\usepackage{enumerate}

\usepackage{longtable}
\usepackage{pifont}  
\usepackage{bm}
\usepackage{bbm}
\usepackage{dsfont}

\usepackage{amsmath, amssymb, amsfonts}  

\usepackage[mathscr]{eucal}
\usepackage{subfigure}
\usepackage{theorem}

\usepackage{algorithm}

\newtheorem{theorem}{Theorem}

\newtheorem{proposition}{Proposition}
\newtheorem{lemma}{Lemma}

\newtheorem{assumption}{Assumption}

{\theorembodyfont{\upshape}
\newtheorem{remark}{Remark}

}

\def\qed{\hfill \vrule height 5pt width 5pt depth 0pt \medskip}
\newcommand{\proof}{\noindent {\bf Proof. }}

\def\diag{{\rm diag}}
\def\sign{{\rm sgn}}

\def\rank{{\rm rank}}

\newcommand{\bp}{\bm{p}}
\newcommand{\bq}{\bm{q}}
\newcommand{\br}{\bm{r}}
\newcommand{\bs}{\bm{s}}
\newcommand{\bu}{\bm{u}}
\newcommand{\bv}{\bm{v}}
\newcommand{\bx}{\bm{x}}
\newcommand{\by}{\bm{y}}
\newcommand{\bw}{\bm{w}}
\newcommand{\bz}{\bm{z}}

\newcommand{\bK}{{F_K}}
\newcommand{\bQ}{{F_Q}}
\newcommand{\bV}{{F_V}}

\newcommand{\beq}{\begin{equation}}
\newcommand{\eeq}{\end{equation}}
\newcommand{\beqa}{\begin{eqnarray}}
\newcommand{\eeqa}{\end{eqnarray}}
\newcommand{\beqan}{\begin{eqnarray*}}
\newcommand{\eeqan}{\end{eqnarray*}}

\newcommand{\pde}[2]{ \frac{\partial #1}{\partial #2} }

\newcommand{\bite}{\begin{itemize}}
\newcommand{\eite}{\end{itemize}}
\newcommand{\benu}{\begin{enumerate}}
\newcommand{\eenu}{\end{enumerate}}

\definecolor{darkpastelgreen}{rgb}{0.01, 0.75, 0.24}

\title{Asymptotic Analysis of Discrete-Time Self-Attention Dynamics in Transformers \thanks{C. Altafini is with the Division of Automatic Control, Dept. of Electrical Engineering, Link\"oping University, SE-58183, Link\"oping, Sweden. email: {\tt\small claudio.altafini@liu.se}. Work supported in part by the Swedish Research Council (grant n. 2024-04772) and by the Wallenberg AI, Autonomous Systems and Software Program (WASP) funded by the Knut and Alice Wallenberg Foundation.}
}

\author{Claudio Altafini}

\begin{document}

\maketitle
%

\begin{abstract}
In machine learning, a self-attention dynamics describes the attention mechanism of transformers, and is obtained treating the layer index of a transformer as time index. 
Even though naturally discrete-time, this dynamical system is most often studied in continuous-time.
Scope of this paper is to provide a thorough investigation of the discrete-time version of the self-attention dynamics: a complete classification of its equilibria is given, and their stability properties are studied, in both the single-head and the multihead case. We show that in both cases the main attractor of the dynamics is a consensus equilibrium aligned with the principal eigenvector of the value matrix (in the single-head case, a corresponding matrix of weights in the multihead case), even though other locally asymptotically stable bipartite equilibria can be show to exist at least in the single-head situation. 
The behavior is confirmed by looking at the learned weight matrices of a real transformer.

\end{abstract}


\section{Introduction}

The core algorithm inside a large language model is a so-called attention-based transformer \cite{vaswani2017attention}.
This architecture, introduced in 2017, has rapidly become the standard in many machine learning problems, also beyond natural language processing. 
A transformer is composed of multiple layers having identical structure, acting on ``tokens'', that is, vectorial representations of the elementary units in which a set of data can be decomposed (words for language processing, patches of pixels for image processing, etc.). 
Inside a layer are two main subblocks: one is the attention mechanism and the other a feedforward neural network (FFN).
The attention mechanism quantifies how much importance each token is giving to the other tokens, which justify why it is normally referred to as ``self-attention'' (the tokens' attention is a function of the tokens themselves).
It is implemented as a softmax function of an inner product involving the tokens multiplied by two learned weight matrices called query and key.
The attention coefficients provide the weights of a convex combination of the tokens, which is in turn multiplied by a third matrix of weights called the value matrix. 
In a so-called multihead transformer, the attention mechanism inside a layer is ``multiplexed'' in such a way that different parts of a token can use different attention weights.
The results at the various heads are then ``concatenated'' using a fourth matrix of weights called the output matrix. 
The FFN subblock of a layer is instead normally composed of a 2-layer ``perceptron'', that is, a static map containing two learned weight matrices and bias terms with in the middle an activation function. 
Other ingredients of a transformer layer are residual connections, in which to the token value exiting one of the two subblocks above is added the value of the token before the subblock, and normalization operations, which we simplify here as $ L_2 $ normalizations of the token vectors, in place of the original ``layernorm'' operation (an affine, elementwise operation).

A popular approach to build mathematical models of transformers is to interpret the layer index as time axis, and to describe the changes that the tokens (the ``state variables'') are subject to when passing through these layers as a dynamical system \cite{dutta2021redesigning,sander2022sinkformers,lu2019understanding,geshkovski2023emergence,geshkovski2025mathematical,wu2024role}. 
Even though a natural setting for such model is discrete-time and difference equations, most analyses carried out so far have opted for a continuous-time setting with ordinary differential equations \cite{abellaconsensus,altafini2026multistability,geshkovski2023emergence,geshkovski2025mathematical}, in which the finite number of layers is replaced by a continuum of layers. 
The main reason is that most properties that one can investigate with a dynamical system are asymptotic properties, like for instance the existence and stability of the equilibria, which anyway requires the time axis (ie., the number of layers) to go to $ \infty$.
Another reason is that continuous-time dynamics are more amenable to be investigated using e.g. mean-field methods, quite popular in this context  \cite{bruno2025emergence,burger2025analysis,geshkovski2024dynamic}.

Apart from continuous-time vs discrete-time, in the literature the setting of choice for a mathematical analysis of self-attention dynamics entails several simplifications: the initial embedding and positional encoding are disregarded; a single head is normally considered instead of a multihead; all layers are assumed to have the same weights (weight-sharing or ``universal'' transformers \cite{dehghani2018universal}, which leads to a time-invariant dynamical system); the attention matrix is full (each token attend to all other tokens); the layernorm is replaced by an $ L_2 $ norm (which renders the tokens of unit length, that is, makes them evolve on the unit sphere); the weight matrices are restricted to special structure (sometimes equal to the identity \cite{geshkovski2025mathematical}, other times symmetric \cite{altafini2026multistability}); the FFN part is ignored. 

For such simplified continuous-time model, it has been shown in \cite{geshkovski2023emergence} that the tokens tend to cluster into identical vectors.
In particular, \cite{abella2024asymptotic,abellaconsensus} rephrased this feature as a consensus-like property, a familiar framework in the multiagent system literature \cite{olfati2007consensus}. 
Experimentally, this phenomenon is well-known under the names of  oversmoothing, rank collapse, or token-uniformity \cite{dong2021attention,dovonon2024setting,noci2022signal,nguyen2023mitigating,scholkemper2024residual,shi2022revisiting,zhai2023stabilizing}.
Further analysis carried out in \cite{altafini2026multistability} provides a complete classification of the possible equilibria, and an analysis of their stability and convergence properties, showing that the eigenvectors of the value matrix play a prominent role: most token trajectories align in fact with the eigenvector associated to the principal eigenvalue of the value matrix, at least in the cases in which the  value matrix is symmetric and with a positive strictly dominant eigenvalue.

While it is mentioned in several papers that the discrete-time model of a self-attention dynamics behaves like its continuous-time counterpart, an explicit, thorough stability analysis of the discrete-time case is still missing (an exception is \cite{abella2024asymptotic} where discrete-time consensus is proven in some special cases).
Such an analysis is the main contribution of this paper. 
What we obtain is that indeed the behavior of the discrete-time self-attention model reflects that of its continuous-time counterpart for what concerns the classification of the equilibria and their stability properties. 
Consensus (when all token vectors become identical), bipartite consensus (when some tokens converge to a point on the unit sphere and the others converge to the antipodal point), polygonal equilibria, and clustering (non-antipodal) equilibria are the four classes of possible equilibria. 
The first two classes can occur only at the eigenvectors of the value matrix, and in particular consensus can be asymptotically stable only when it is aligned with the principal eigenvector of the value matrix, while all other consensus points can be shown to be unstable. 
Bipartite consensus equilibria can be (sometimes) locally asymptotically stable, and coexist with the ``main'' consensus attractor, leading to a multistable system as in the continuous-time case \cite{altafini2026multistability}. 
Proving these asymptotic properties requires to compute explicitly the Jacobian of the system and its eigenvalues/eigenvectors, which is significantly more challenging than in the continuous-time case. 
These explicit calculations, which constitute the main technical content of the paper, yield several nontrivial results, most notably the fact that convergence to consensus is always associated to the principal (most positive) eigenvalue of the value matrix, rather than to the eigenvalue of largest absolute value (i.e., equal to the spectral radius).

As a further extension w.r.t. the existing literature, we can provide a stability analysis also for the multihead case. In particular, we can still show asymptotic stability of  the consensus equilibrium at the principal eigenvector of a matrix which is the multihead equivalent of the value matrix (we call this the ``layer matrix'', and it is the product of a sum of suitably zero-padded value matrices at the various heads with the output matrix), meaning that the dynamical behavior of the multihead system is similar to that of the single head case. 

An advantage of considering a discrete-time self-attention dynamics is that the learned weight matrices of a real transformer can be used to test the model.
While in most transformers these weight matrices change from layer to layer (leading to a time-varying dynamics), the special class of universal transformers \cite{dehghani2018universal} has the property of sharing weights across layers (leading to the time-invariant dynamics investigated in this paper). 
A simple example of such universal architecture is ALBERT  (``A Lite BERT'')  \cite{lan2019albert}. 
We show in the paper that under (some of) the simplifying assumptions mentioned above ($ L_2 $ regularization, no FFN), indeed the discrete-time self-attention dynamics obtained by choosing the ALBERT weight matrices converges to a consensus point aligned with the principal eigenvector of our layer matrix.
This confirms that the theoretical analysis is sound, and can provide useful predictions also on the finite-time horizon given by the number of layers of a real transformer.

\section{Model formulation}
\label{sec:single-head-analysis}
We consider a simplified model of a transformer consisting of $T$ layers, each characterized by a single head and by shared weight matrices (universal transformer), and in which we disregard the FFN part.  A residual connection is included in the model, while as normalization we use $ L_2 $ normalization, which amounts to considering unit vectors, i.e., vectors that evolve on the unit sphere. 
To uniform ourselves with the standard dynamical systems notation, we treat tokens (i.e., state vectors) as  columns vectors, see also \cite{abella2024asymptotic,geshkovski2025mathematical}. 
Consider $n$ tokens represented as unit length column vectors $ \bx_i \in \mathbb{S}^{d-1} \subset \mathbb{R}^d$, $ i=1, \ldots, n $. 
Denote $ \bQ,\bK\in \mathbb{R}^{m\times d}$ the query and key matrices (for simplicity and without loss of generality we hereafter assume $ m=d$)  and $ \bV \in \mathbb{R}^{d \times d }$ the value matrix. 
Because of our column-token representation, these matrices correspond to the transposes of the weight matrices normally considered in the machine learning  literature \cite{vaswani2017attention, Likhosherstov2021OnTE, chowdhury2022learningtransformerkernel, ustaomeroglu2025theoreticalstudyhyperselfattention} (more on this in Section~\ref{sec:ext-multi-head}). 
Since we are dealing with a single head, a fourth matrix (denoted output matrix) is not required, and is therefore for now omitted.

The following discrete-time difference equation encodes the single-head self-attention dynamics we consider, including a residual connection and an $ L_2 $ layer normalization:
\begin{equation}
\label{eq:update}
\bx_i^+ = \frac{\bx_i +  \eta \bV \sum_{j=1}^n A_{ij}(\bx)  \bx_j }{\| \bx_i + \eta \bV\sum_{j=1}^n A_{ij}(\bx)  \bx_j \|},\quad i=1, \ldots, n
\end{equation}
where $ A_{ij}(\bx) = \frac{e^{\langle \bQ \bx_i, \bK \bx_j \rangle } }{\sum_{\ell=1}^n e^{\langle \bQ \bx_i, \bK \bx_\ell \rangle } } $ is the attention that the token $ \bx_i $ gives to the token $ \bx_j$, computed through a softmax function ($ \bx $ is the stack of $ \bx_1, \ldots, \bx_n $ vectors) and 
where we have added an extra parameter $ \eta>0 $ (``step size''), which balances the importance of the self-attention part against the residual connection \cite{noci2022signal}.  
The (row stochastic) attention matrix $ A(\bx) $ is assumed full, i.e., each token attends to every other token.
In \eqref{eq:update} we are implicitly interpreting the tokens as layer-indexed state variables: $ \bx_i = \bx_i(t) $ and $ \bx_i^+ = \bx_i(t+1)$, for some layer index $ t = 1, \ldots, T$.
We let $\|\cdot \|$ denote the Euclidean vector norm and the operator norm for matrices.
For the $i$-th token, define the tangent projector on $ \mathbb{S}^{d-1}$ at $\bx_i$ by $ \Pi_{\bx_i} := I -\bx_i  \bx_i ^T$, which satisfies $\Pi_{\bx_i} \bx_i =0$ and $\Pi_{\bx_i} ^T=\Pi_{\bx_i} $.

We are interested in investigating the asymptotic behavior of \eqref{eq:update} when the number of layers $T$ goes to $ \infty$, and in particular in computing the equilibria of \eqref{eq:update} and studying their stability properties. 
For that, it is convenient to investigate first a simpler system, which, for analogy with the continuous-time case, we call the multiagent discrete-time Oja system \cite{altafini2026multistability}.
This system has the same structure as \eqref{eq:update} but lacks the attention matrix.

\section{Multiagent discrete-time Oja system}
\label{sec:multi-oja}
When we consider uniform attention \cite{noci2022signal}, i.e., we take $ A(\bf{x})=\frac{1}{n} \mathds{1} \mathds{1}^T$, then instead of \eqref{eq:update} we obtain the simpler system
\begin{equation}
\label{eq:update-moja}
\bx_i^+ = \frac{\bx_i +  \frac{\eta}{n} \bV \sum_{j=1}^n   \bx_j }{\| \bx_i + \frac{\eta}{n} \bV\sum_{j=1}^n \bx_j \|}.
\end{equation}
Let $ \by= \bV \bm{m} ( \bx)  =  \frac{1}{n}\bV \sum_{j=1}^n  \bx_j \in \mathbb{R}^d$ be the total influence of all agents on agent $i$ (which is the same for all agents).
Splitting \eqref{eq:update-moja} into parts before/after the normalization we can also write:
\beq
\tilde{\bx}_i = \bx_i + \eta \by 
\quad \text{and} \quad \bx_i^+ = \frac{\tilde{\bx}_i}{\| \tilde{\bx}_i \|} .
\label{eq:update-moja2}
\eeq

We make the following assumption which holds throughout the rest of this and next section.

\begin{assumption} 
\label{ass:V-sym}
The value matrix $ \bV\in \mathbb{R}^{d\times d } $ is nonsingular and symmetric, of eigenvalues $ \lambda_1>  \lambda_2 \geq \ldots \geq \lambda_d $ with $ \lambda_1 >0 $ simple and positive.
\end{assumption}
Let $ \bv_1, \ldots, \bv_d $ be the associated eigenvectors, normalized s.t. $ \| \bv_k \|=1$.

\subsection{Equilibria}

Let us first give our definition of consensus and bipartite consensus equilibria, which we request to be collinear with the eigenvectors of $\bV$ (the reason will become clear later on when we study the full model \eqref{eq:update}). 
The points $ \bx_1, \ldots, \bx_n \in \mathbb{S}^{d-1} $ are said to be in a {\em consensus equilibrium} if $ \bx_i = \bx_j = \bv_k $ $ \forall \, i, j = 1, \ldots, n$ (or $ \bx_i = \bx_j = - \bv_k $ $ \forall \, i, j = 1, \ldots, n$) and for some $ k=1, \ldots, d$. 
They are said to be in a {\em bipartite consensus equilibrium} if $ \bx_i =  \bv_k  $ for $ i \in  \mathcal{V}_1 \subset \mathcal{V} $  and $ \bx_i =  - \bv_k  $ for $ i \in  \mathcal{V} \setminus \mathcal{V}_1  $, for some $ k=1, \ldots, d$.

\begin{lemma}
\label{lem:moja-equil1}
Let $ \bx = [ (\bx_1)^T \, \ldots  (\bx_1)^T ]^T$ be an equilibrium point of the system \eqref{eq:update-moja}. 
Then $ \bx $  belongs to one of the following classes:
\benu
\item consensus: $ \bx_i = \bv_k $ $ \forall \, i$, and $ k=1, \ldots, d$;
\item bipartite consensus: $ \bx_i = \pm \bv_k $ $ \forall \, i$ and $ k=1, \ldots, d$;
\item polygonal equilibria: $ \{ {\bx_i \in \mathbb{S}^{d-1}} \; \text{s.t.} \;  \bV \sum_{j=1}^n  \bx_j =0\}. $
\eenu
\end{lemma}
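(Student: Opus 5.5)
At an equilibrium we have $\bx_i^+ = \bx_i$ for every $i$. Using the splitting \eqref{eq:update-moja2}, this means $\bx_i + \eta \by = \|\tilde{\bx}_i\| \bx_i$, that is,
\[
\eta \by = (\|\tilde{\bx}_i\| - 1)\, \bx_i =: c_i \bx_i, \qquad i=1,\ldots,n,
\]
where $\by = \bV \bm{m}(\bx)$ is the same vector for all agents. We are implicitly assuming $\tilde{\bx}_i \neq 0$, which is needed for \eqref{eq:update-moja} to be well defined. The plan is to split on whether $\by$ vanishes.

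\textbf{Case $\by = 0$.} Then $\bV \sum_j \bx_j = 0$ by definition. Since $\bV$ is nonsingular by Assumption~\ref{ass:V-sym}, this is equivalent to $\sum_j \bx_j = 0$. This is exactly class 3, the polygonal equilibria.

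\textbf{Case $\by \neq 0$.} Then every $c_i \neq 0$ and $\bx_i = \eta \by / c_i$. Because $\|\bx_i\| = 1$, each $\bx_i$ equals $\pm \bu$ with $\bu := \by/\|\by\|$. So the tokens split into a set $\mathcal{V}_1$ with $\bx_i = \bu$ and its complement with $\bx_i = -\bu$. Hence
\[
\bm{m}(\bx) = \tfrac{1}{n}\bigl(|\mathcal{V}_1| - (n - |\mathcal{V}_1|)\bigr)\bu =: \alpha \bu .
\]
Here $\alpha \neq 0$, since otherwise $\by = 0$. Substituting into $\by = \bV \bm{m}(\bx)$ gives $\|\by\| \bu = \alpha \bV \bu$, i.e. $\bV \bu = (\|\by\|/\alpha)\, \bu$. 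Thus $\bu$ is a unit eigenvector of $\bV$, so $\bu = \pm \bv_k$ for some $k$. Strictly, if $\lambda_k$ is repeated, $\bu$ is some unit vector in the eigenspace, and the eigenvectors $\bv_k$ should be read accordingly.

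It remains to sort the sign patterns. If all tokens carry the same sign, we get consensus (class 1). If both signs occur, we get bipartite consensus (class 2). This gives the classification.

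There is no real obstacle. The only delicate points are two bookkeeping details: the normalization must be well defined ($\tilde{\bx}_i \neq 0$), and $\alpha = 0$ must not occur in the second case. The latter is automatic, because $\alpha = 0$ would force $\by = 0$, which belongs to the first case. Optionally, one can also check the converse, namely that every point in the three classes is an equilibrium, as long as $\tilde{\bx}_i \neq 0$. This amounts to observing that $\tilde{\bx}_i$ is then a multiple of $\bx_i$. The multiple is positive provided $1 \pm \eta \lambda_k \alpha > 0$, which holds for small $\eta$; otherwise the antipodal sign flips. That converse is not needed for the stated implication.
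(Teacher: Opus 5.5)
Your proof is correct and follows the same route as the paper. The fixed-point condition forces $\by$ to be either zero or collinear with every $\bx_i$, and then $\by = \bV\bm{m}(\bx)$ with $\bm{m}(\bx)$ parallel to the common direction shows that this direction is an eigenvector of $\bV$. Your global split on $\by = 0$ versus $\by \neq 0$, with consensus and bipartite consensus sorted by the tokens' sign pattern and the explicit factor $\alpha$ in $\bm{m}(\bx) = \alpha\bu$, is somewhat more careful than the paper's per-agent sign of $\varphi_i$ and its ``identical'' treatment of the bipartite case.
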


\proof
An equilibrium point $ \bx_i $ satisfies $ \bx_i^+ = \bx_i$ for all $i$, hence, from \eqref{eq:update-moja2}, $ \bx_i = \frac{\tilde{\bx}_i}{\| \tilde{\bx}_i \|}  $, which implies that $ \tilde{\bx}_i$ and $ \bx_i $ must be collinear, i.e., $ \tilde{\bx}_i = \varphi  \bx_i $ for some $ \varphi \in \mathbb{R}$.
If $ \by =   \frac{1}{n}\bV \sum_{j=1}^n  \bx_j$, then from \eqref{eq:update-moja2}, $ \tilde{\bx}_i = \bx_i + \eta \by  $ holds iff also $\by  $ and $ \bx_i $ are collinear or $ \by =0$.
This condition can be written equivalently using projections as 
 \[
\Pi_{\bx_i} \by =  (I - \bx_i \bx_i^T )\by   =0 .
 \]
 There are 3 cases in which this  can happen:
\benu
\item consensus: $ \by = \varphi_i \bx_i $, for some scalar $ \varphi_i >0$;
\item bipartite consensus: $ \by = -\varphi_i \bx_i $, for some scalar $\varphi_i >0$;
\item polygonal equilibria: $ \by =0$.
\eenu
In fact, in the first two cases, $ \Pi_{\bx_i} \bx_i = \pm  \varphi_i (\bx_i - \bx_i  \bx_i^T \bx_i ) = 0$, since $ \bx_i^T \bx_i =1$. The third case follows trivially.
To show that consensus must be an eigenvector of $\bV$, observe that at this equilibrium point we have  $ \by= \bV \bx_i $, since $ \bx_i= \bx_j $.
Combining with $ \by= \varphi_i \bx_i $, we have $  \bV \bx_i =\varphi_i \bx_i $, i.e., $ \bx_i $ is an eigenvector of $\bV$ and $ \varphi_i$ one of its eigenvalues. The argument for bipartite consensus is identical. 

\qed

\begin{remark}
For a given $ \bv_k$ there are $ 2^n $ possible consensus or bipartite consensus equilibria.
These are always paired by a global symmetry w.r.t. the origin, i.e., if $ \bx_i = \bv_k $ $ \forall\, i$  is a consensus equilibrium point, its antipodal point $ \bx_i = - \bv_k $ $ \forall\, i$ is also a consensus equilibrium point, and similarly for the bipartite consensus equilibria.
This is implicitly assumed from now on, but  not explicitly written, to avoid overloading the formulation of the results.
\end{remark}

\subsection{Stability}
The stability properties of the multiagent Oja system \eqref{eq:update-moja} are summarized in the following theorem. 

\begin{theorem}
\label{thm:moja-DT}
For the system \eqref{eq:update-moja}, under Assumption~\ref{ass:V-sym}, if 
$ \eta< \min \left( \frac{2}{ |\lambda_1 + \lambda_d |}, \, \frac{1}{2 \max \{  \lambda_1 , | \lambda_d | \} }  \right) $, 
the principal consensus equilibrium $ \bx_i=\bv_1 $ $\forall \, i$ is asymptotically stable, while all other consensus equilibria, all bipartite consensus equilibria (including those aligned with $ \bv_1$), and all polygonal equilibria are unstable.
The trajectories of all tokens $ \bx_i(t)$ converge to $ \bv_1 $ (or to $ - \bv_1 $) for almost all initial conditions $ \bx_i(0)$. 
\end{theorem}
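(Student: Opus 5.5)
The plan is a local linearization of the map \eqref{eq:update-moja} at each class of equilibria given by Lemma~\ref{lem:moja-equil1}, followed by a global argument for the almost-everywhere convergence. Write the map as $F_i(\bx)=\tilde{\bx}_i/\|\tilde{\bx}_i\|$ with $\tilde{\bx}_i=\bx_i+\eta\by$. Its differential is
\[
dF_i = \frac{1}{\|\tilde{\bx}_i\|}\,\Pi_{\bx_i^+}\Big(d\bx_i+\frac{\eta}{n}\bV\sum_j d\bx_j\Big).
\]
At an equilibrium $\bx_i^+=\bx_i$, so the Jacobian acts on the tangent space $\prod_i T_{\bx_i}\mathbb{S}^{d-1}$. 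I will split each perturbation into the average $\bar\bu=\frac1n\sum_j d\bx_j$ and the differences $d\bx_i-d\bx_j$, and diagonalize in the eigenbasis of $\bV$.

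\emph{Consensus at $\bv_k$.} Here $\tilde\bx_i=(1+\eta\lambda_k)\bv_k$ and the tangent space is $\mathrm{span}\{\bv_\ell\}_{\ell\neq k}$ for every token. Since $\Pi_{\bv_k}$ commutes with $\bV$, the Jacobian is block diagonal.
\begin{itemize}
\item On disagreement directions ($\sum_j d\bx_j=0$) the eigenvalue is $1/|1+\eta\lambda_k|$.
\item On the average direction along $\bv_\ell$ the eigenvalue is $(1+\eta\lambda_\ell)/|1+\eta\lambda_k|$.
\end{itemize}
The condition $\eta<1/(2\max\{\lambda_1,|\lambda_d|\})$ gives $1+\eta\lambda_\ell\in(1/2,3/2)$, so all these quantities are positive. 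For $k=1$ every eigenvalue then has modulus $<1$: we have $1/(1+\eta\lambda_1)<1$ because $\lambda_1>0$, and $(1+\eta\lambda_\ell)/(1+\eta\lambda_1)<1$ because $\lambda_1$ is simple and strictly largest. For $k\ge2$ the direction $\bv_1$ gives $(1+\eta\lambda_1)/(1+\eta\lambda_k)>1$, so the equilibrium is unstable. This is where the principal (most positive) eigenvalue rather than the spectral radius matters. I would also check where $2/|\lambda_1+\lambda_d|$ enters: plausibly it keeps the factors $|1+\eta\lambda_\ell|$ ordered, or it rules out sign flips at bipartite points.

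\emph{Bipartite consensus at $\bv_k$.} Let $p=|\mathcal V_1|$ with $0<p<n$ and set $s=(2p-n)/n$, so that $\by=s\lambda_k\bv_k$. The normalizers are $|1\pm\eta s\lambda_k|$. Because the tokens sit at $\pm\bv_k$, their tangent spaces coincide. I would exhibit an explicit unstable direction, namely perturbations of a single group along $\bv_1$ (or along $\bv_k$ rotated coherently). The resulting small $2\times2$ block per eigendirection $\bv_\ell$ has entries involving $\eta\lambda_\ell p/n$ and $\eta\lambda_\ell(n-p)/n$, and I would show its spectral radius exceeds $1$. The case $s=0$ ($p=n/2$) is a polygonal equilibrium and is treated below.

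\emph{Polygonal equilibria.} Here $\by=0$ and $\tilde\bx_i=\bx_i$, so the Jacobian is $\Pi\,(I+\frac{\eta}{n}(\mathds 1\mathds 1^T\otimes\bV))$ restricted to the tangent space. The disagreement modes have eigenvalue $1$, and I expect an average mode with eigenvalue $1+\eta\lambda_1$ times a projection factor. To exhibit it I would take the common perturbation $d\bx_i=\Pi_{\bx_i}\bv_1$. A neutral eigenvalue $1$ is not enough for instability, so I would instead use a Lyapunov-type function: $\|\bm m(\bx)\|$ or $\bv_1^T\bm m$ grows away from $\bm m=0$.

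\emph{Global convergence.} For the almost-sure statement I would use the candidate function $V(\bx)=\bm m(\bx)^T\bV\bm m(\bx)$, or a related quantity, and show it is nondecreasing along trajectories for small $\eta$. Combined with LaSalle's principle on the compact $(\mathbb{S}^{d-1})^n$, this shows that trajectories converge to the equilibrium set. The map is a local diffeomorphism for small $\eta$, so the center--stable manifold theorem for maps implies that the set of initial conditions attracted to the unstable equilibria has measure zero. I expect the main obstacle to be the two steps that are not a simple block diagonalization: establishing strict instability, with an eigenvalue $>1$ rather than neutral, at the bipartite and polygonal equilibria, and constructing the monotone quantity needed for the global claim.
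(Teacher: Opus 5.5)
Your consensus analysis is correct. It is the paper's Lemmas~\ref{lem:moja-F-eigenvalues}--\ref{lem:moja-consensus-stab} done intrinsically. Note that $\eta<1/(2\max\{\lambda_1,|\lambda_d|\})$ already implies $\eta<2/|\lambda_1+\lambda_d|$, so the latter bound plays no role. Your global function $\bm{m}^T\bV\bm{m}$ equals $-2W$ for the paper's $W$ of Lemma~\ref{lem:layp-moja}. Where the paper just says ``generically'', you invoke a center--stable manifold theorem. That is the right tool, but it needs an eigenvalue of modulus $>1$ at every non-principal equilibrium, which is exactly the step you leave open. This is a genuine gap.

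Part of it can be closed as follows.
\begin{itemize}
\item \emph{Polygonal points.} Let $\sigma(d\bx)=\sum_i d\bx_i$. Then $J$ is the identity on $\ker\sigma$, and $\sigma J=(I+\frac{\eta}{n}P\bV)\sigma$ with $P=\sum_i\Pi_{\bx_i}\succeq 0$. If the $\bx_i$ are not all $\pm\bu$ for a single $\bu$, then $P\succ 0$ and $\sigma$ is onto. Moreover $P\bV$ is similar to $P^{1/2}\bV P^{1/2}$, which has a positive eigenvalue by Sylvester's law of inertia since $\lambda_1>0$. This gives a real eigenvalue $>1$.
\item \emph{Bipartite points on $\bv_k$.} Set $s=(n_1-n_2)/n$, $p_i=n_i/n$ and $c_{1,2}=(1\pm\eta s\lambda_k)^{-1}$. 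Group-constant perturbations along $\bv_\ell$, $\ell\neq k$, evolve by $M_\ell=\diag(c_1,c_2)\big(I+\eta\lambda_\ell\mathds{1}[p_1\;\,p_2]\big)$.
\begin{itemize}
\item For $k\neq 1$, $\det M_1=c_1c_2(1+\eta\lambda_1)>1$.
\item For $k=1$, your suggestion to perturb along $\bv_1$ is unavailable, since $\bv_1$ is normal rather than tangent to the sphere at $\pm\bv_1$. Instead, for any $\ell\neq 1$ one gets $1-\mathrm{tr}M_\ell+\det M_\ell=c_1c_2\,\eta^2s^2\lambda_1(\lambda_\ell-\lambda_1)<0$ whenever $s\neq 0$, hence an eigenvalue $>1$.
\end{itemize}
\end{itemize}

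What cannot be closed is the balanced case: $n/2$ tokens at $\bu$ and $n/2$ at $-\bu$. This is a polygonal point, and a bipartite one if $\bu=\bv_k$. There the spectrum of $J$ is $\{1\}\cup\{1+\eta\mu\}$, with $\mu$ ranging over the eigenvalues of $\bV$ compressed to $\bu^\perp$. If $\lambda_2<0$ and $\bu$ is near $\bv_1$ (e.g.\ $\bu=\bv_1$), all $\mu<0$, so no eigenvalue leaves the unit disk. Your planned argument then has nothing to work with.

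This is not a technicality. Take $n=d=2$, $\bV=\diag(1,-\frac12)$ and any $\eta<\frac12$.
\begin{itemize}
\item The circle $\bx_2=-\bx_1$ consists of equilibria.
\item At $\bx_1=(\cos\theta,\sin\theta)$ the multiplier along the circle is $1$.
\item The transverse multiplier is $1+\eta(\sin^2\theta-\frac12\cos^2\theta)$, which is $<1$ for small $|\theta|$.
\end{itemize}
Hence $(\bv_1,-\bv_1)$ is Lyapunov stable, and an open set of initial data converges to antipodal pairs. This contradicts both the claimed instability of bipartite points aligned with $\bv_1$ and the almost-everywhere convergence to $\pm\bv_1$.

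The paper's Lemma~\ref{lem:moja-bip-consensus-stab} (for $k=1$) and Lemma~\ref{lem:moja-polygonal-stab} obtain the eigenvalue $1+\eta\lambda_1$ only by perturbing along $\bv_1$. That direction is not tangent at $\pm\bv_1$, nor at $\bs_i$ unless $\bs_i\perp\bv_1$.

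So neither your route nor the paper's can prove the statement as written. One must exclude these configurations, for instance by assuming $n$ odd, or $\lambda_2>0$ (then interlacing gives $\mu\ge\lambda_2>0$). For even $n\ge 4$ with $\lambda_2<0$, instability at these points can only come from higher-order terms, so a center-manifold type argument would be needed there.
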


The theorem is proven through a series of lemmas. 
Let $ \delta_{ih} $ be a delta function ($ \delta_{ih}=1 $ if $ i=h$ and $ \delta_{ih}=0$ otherwise). 
Let  the right-hand side of \eqref{eq:update-moja}  be $ f_i(\bx)  =  \frac{\bx_i +  \frac{\eta}{n} \bV \sum_{j=1}^n   \bx_j }{\| \bx_i + \frac{\eta}{n} \bV\sum_{j=1}^n \bx_j \|}$, and denote $ f(\bx) =\begin{bmatrix} f_1(\bx)^T & \ldots & f_n(\bx)^T\end{bmatrix}^T $ the $ (nd) $-dimensional vector field associated to the stacked state vector $ \bx= \begin{bmatrix} \bx_1^T & \ldots & \bx_n ^T \end{bmatrix}^T$. 
The Jacobian $ J(\bx) = \pde{f(\bx)}{\bx} = \begin{bmatrix}\pde{f_i(\bx)}{\bx_h} \end{bmatrix} $ is composed of a diagonal part and a part that repeats itself on all $ (i, h)$ blocks.

\begin{lemma}
\label{lem:Jac-moja}
The Jacobian $ J(\bx) $ of the system \eqref{eq:update-moja} has the following blocks:
\beq
\begin{split}
\pde{f_i(\bx)}{\bx_h}  =& \frac{\frac{\eta}{n} \bV}{\| \bx_i +  \frac{\eta}{n} \bV \sum_j \bx_j \| } 
-  ( \bx_i + \frac{\eta}{n} \bV \sum_j \bx_j ) \cdot \\
& \cdot \frac{ \frac{\eta}{n} \left( \bx_i^T \bV + \frac{\eta}{n} \left( \sum_j \bx_j^T \bV^T (I - \bx_i \bx_i^T )\bV \right) \right) }
{\| \bx_i +  \frac{\eta}{n} \bV \sum_j \bx_j \|^2 } \\
& + \left( \frac{I}{\| \bx_i +  \frac{\eta}{n} \bV \sum_j \bx_j \| }  
-  ( \bx_i + \frac{\eta}{n} \bV \sum_j \bx_j ) \cdot \right. \\
&\left.  \cdot \frac{ \frac{\eta}{n} \left( \sum_j \bx_j^T \bV^T (I -\frac{\eta}{n} \bx_i \sum_\ell \bx_\ell^T \bV ) \right)}
{\| \bx_i +  \frac{\eta}{n} \bV \sum_j \bx_j \|^2 }\right) \delta_{ih}
\end{split}
\label{eq:Jacob-moja}
\eeq
\end{lemma}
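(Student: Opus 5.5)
The plan is to differentiate the map $f_i(\bx)=\tilde{\bx}_i/\|\tilde{\bx}_i\|$ with the chain rule, using the splitting \eqref{eq:update-moja2}. Write $\tilde{\bx}_i=\bx_i+\frac{\eta}{n}\bV\sum_j\bx_j$. Its derivative with respect to $\bx_h$ is
\[
\pde{\tilde{\bx}_i}{\bx_h}=\frac{\eta}{n}\bV+I\,\delta_{ih}.
\]
For the normalization map $N(\bz)=\bz/\|\bz\|$ the standard formula is
\[
\pde{N}{\bz}=\frac{I}{\|\bz\|}-\frac{\bz\bz^T}{\|\bz\|^3}.
\]
Composing, we get
\[
\pde{f_i}{\bx_h}=\frac{1}{\|\tilde{\bx}_i\|}\pde{\tilde{\bx}_i}{\bx_h}-\frac{\tilde{\bx}_i\,\tilde{\bx}_i^T}{\|\tilde{\bx}_i\|^3}\pde{\tilde{\bx}_i}{\bx_h}.
\]
This splits naturally into a part common to all $(i,h)$ blocks, coming from $\frac{\eta}{n}\bV$, and a part multiplied by $\delta_{ih}$, coming from $I$. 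That split matches the two lines of \eqref{eq:Jacob-moja}.

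The first term of each part gives $\frac{\eta}{n}\bV/\|\tilde{\bx}_i\|$ and $I/\|\tilde{\bx}_i\|$ directly. For the second terms, the step is to rewrite the row vectors $\tilde{\bx}_i^T\bV$ and $\tilde{\bx}_i^T$, each divided by one power of $\|\tilde{\bx}_i\|$, in the form appearing in the statement. Here $\|\tilde{\bx}_i\|^2=1+2\frac{\eta}{n}\bx_i^T\bV\sum_j\bx_j+\frac{\eta^2}{n^2}\|\bV\sum_j\bx_j\|^2$, and the unit-sphere constraint $\bx_i^T\bx_i=1$ will be used. For the common block, expanding $\tilde{\bx}_i^T\frac{\eta}{n}\bV$ gives $\frac{\eta}{n}\big(\bx_i^T\bV+\frac{\eta}{n}\sum_j\bx_j^T\bV^T\bV\big)$. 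The statement instead contains $\bV^T(I-\bx_i\bx_i^T)\bV$, i.e. an extra $-\frac{\eta}{n}\sum_j\bx_j^T\bV^T\bx_i\bx_i^T\bV$. I expect this to come from interpreting the derivative as the Jacobian restricted to the product of spheres: components along $\bx_i$ are dropped using $\bx_i^T\bx_i=1$, and the exact constant in front of $\tilde{\bx}_i$ is redistributed against $\|\tilde{\bx}_i\|$.

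Similarly, for the diagonal block $\tilde{\bx}_i^T=\bx_i^T+\frac{\eta}{n}\sum_\ell\bx_\ell^T\bV^T$. The statement's factor $\frac{\eta}{n}\sum_j\bx_j^T\bV^T(I-\frac{\eta}{n}\bx_i\sum_\ell\bx_\ell^T\bV)$ should come out the same way, after identifying and removing the radial contribution $\bx_i^T$, which is annihilated on tangent directions.

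The main obstacle is exactly this bookkeeping. One must decide which terms are absorbed using $\bx_i^T\bx_i=1$ and tangency $\bx_i^T\delta\bx_i=0$, and which are kept verbatim, so that the final expression reproduces \eqref{eq:Jacob-moja} term by term. I would first derive the unconstrained Jacobian cleanly and then verify the identity with the stated form by multiplying both on the right by an arbitrary tangent vector $\bw_h\in T_{\bx_h}\mathbb{S}^{d-1}$, i.e. with $\bx_h^T\bw_h=0$. If a residual discrepancy remains, I would check whether it is proportional to $\tilde{\bx}_i$ times a row vector vanishing at the equilibria of Lemma~\ref{lem:moja-equil1}, where the Jacobian will actually be used.
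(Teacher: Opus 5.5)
Your chain-rule formula is the correct ambient Jacobian, but the reconciliation step fails: the stated expression is not equal to it, even on tangent vectors.

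Tangency removes only one piece, the diagonal term $-\tilde\bx_i\bx_i^T\delta_{ih}/\|\tilde\bx_i\|^3$ coming from the $\bx_i^T$ part of $\tilde\bx_i^T$. The paper drops this implicitly by substituting $\bx_i^T\bx_i=1$ inside the norm before differentiating.

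The factor $\bV^T(I-\bx_i\bx_i^T)\bV$ and the correction $-\frac{\eta}{n}\bx_i\sum_\ell\bx_\ell^T\bV$ are not a tangent projection. They are the derivative of the term $-\frac{\eta^2}{2}(\bx_i^T\by)^2$ in the second-order binomial truncation $\|\tilde\bx_i\|\approx 1+\eta\bx_i^T\by+\frac{\eta^2}{2}(\|\by\|^2-(\bx_i^T\by)^2)$. The paper differentiates this truncation and inserts it into the quotient rule, explicitly disregarding terms of order higher than $2$ in $\eta$. In your exact formula these terms correspond to the first-order part of $1/\|\tilde\bx_i\|=1-\eta\bx_i^T\by+O(\eta^2)$ multiplying $\frac{\eta}{n}\bx_i^T\bV$ and $\frac{\eta}{n}\sum_j\bx_j^T\bV^T$.

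For instance, $-\frac{\eta^2}{n^2}(\bx_i^T\bV\sum_j\bx_j)\,\bx_i^T\bV\bw_h$ does not vanish just because $\bx_h^T\bw_h=0$. The higher-order terms of $1/\|\tilde\bx_i\|$ then leave a non-radial $O(\eta^3)$ residual at generic $\bx$. Concretely, take $n=1$, $\bV=\diag(2,1)$, $\eta=0.1$, $\bx=(1,1)^T/\sqrt2$, $\bw=(1,-1)^T/\sqrt2$. Both the stated and the true Jacobian map $\bw$ to $(I+\eta\bV)\bw/\|\tilde\bx\|$ minus a multiple of $\tilde\bx/\|\tilde\bx\|^2$. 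The coefficient is $0.1$ in the stated formula and $0.115/\sqrt{1.325}\approx 0.0999$ in the true one.

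The missing idea is therefore the truncation in $\eta$. The lemma is an identity only up to $O(\eta^3)$ terms, and modulo the radial term; that is exactly what the paper's proof establishes. Your derivation becomes a proof once you add this step. In the row vector $\tilde\bx_i^T(\frac{\eta}{n}\bV+I\delta_{ih})/\|\tilde\bx_i\|$, drop $\bx_i^T\delta_{ih}$, expand $1/\|\tilde\bx_i\|$ to first order, discard $\eta^3$ terms, and use the symmetry of $\bV$. The result then matches the statement term by term.

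Your fallback check does hold at the equilibria of Lemma~\ref{lem:moja-equil1}:
\begin{itemize}
\item At (bipartite) consensus every row vector involved is a multiple of $\bv_k^T$, so all of them vanish on tangent vectors.
\item At polygonal equilibria $\sum_j\bx_j=0$, since $\bV$ is nonsingular, so the two non-radial row vectors coincide.
\end{itemize}
This is all that the subsequent stability analysis on tangent vectors needs. It is, however, a statement at the equilibria, not the general identity claimed in the lemma.
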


\proof
Let us first express the series expansion of the normed term:
\[
\begin{split}
& \| \bx_i +  \frac{\eta}{n} \bV \sum_j \bx_j \| =  \left(  (  \bx_i +  \eta \by )^T (  \bx_i + \eta \by) \right)^{\frac{1}{2}}  \\
& = \left( 1 + 2  \eta \bx_i ^T \by + \eta^2 \| \by \|^2   \right)^{\frac{1}{2}} \\
& \approx  1 + \frac{1}{2} \left( 2  \eta \bx_i ^T \by + \eta^2 \| \by \|^2  \right) - \frac{1}{2}  \eta^2 ( \bx_i^T \by ) ^2 + O(\eta^3)  \\
& =  1 + \frac{\eta}{n} \bx_i^T \bV \sum_j \bx_j \\
& + \frac{\eta^2}{2 n^2} \left( \sum_j \bx_j^T \bV^T \bV \sum_j \bx_j - ( \bx_i^T \bV \sum_j \bx_j)^2 \right) \! \!  + O(\eta^3) 
\end{split}
\]
where we have used the binomial series expansion ($ (1 + \xi)^{\frac{1}{2}}\approx 1 + \frac{\xi}{2} - \frac{ \xi^2}{8}  + O(\xi^3)  $), disregarding terms of order higher than 2 in $ \eta$.
Differentiating this term
\[
\begin{split}
& \pde{\| \bx_i +  \frac{\eta}{n} \bV \sum_j \bx_j \|}{\bx_h } = \\
&  =  \frac{\eta}{n} \bx_i^T \bV  + \frac{\eta^2}{n^2}  \sum_j \bx_j^T \bV^T ( I - \bx_i \bx_i^T ) \bV   \\
& + \left( \frac{\eta}{n} \sum_j \bx_j^T \bV^T - \frac{\eta^2}{n^2}  \sum_j \bx_j^T \bV^T \bx_i \sum_\ell \bx_\ell^T \bV \right) \delta_{ih}
\end{split}
\]
Inserting this expression into
\[
\begin{split}
\pde{f_i(\bx)}{\bx_h}  =& \frac{(I \delta_{ih} + \frac{\eta}{n} \bV)  \| \bx_i +  \frac{\eta}{n} \bV \sum_j \bx_j \|}
{\| \bx_i +  \frac{\eta}{n} \bV \sum_j \bx_j \|^2} \\
&  - \frac{ (  \bx_i +  \frac{\eta}{n} \bV \sum_j \bx_j ) \pde{\| \bx_i +  \frac{\eta}{n} \bV \sum_j \bx_j \|}{\bx_h }  }
{\| \bx_i +  \frac{\eta}{n} \bV \sum_j \bx_j \|^2}
\end{split}
\]
the result follows.
\qed

\begin{lemma}
\label{lem:moja-F-eigenvalues}
For the system \eqref{eq:update-moja}, under Assumption~\ref{ass:V-sym}, at a consensus equilibrium $ \bx_i=\bv_k $ $\forall \, i$ the Jacobian $ J(\bx) $ has the following 3 classes of eigenvalues:
\benu
\item if $ 1+ \eta \lambda_k >0$, $ \frac{1-\eta \lambda_k + \eta^2 \lambda_k^2 }{1+ \eta \lambda_k } $ of multiplicity $ n$, while if $ 1+ \eta \lambda_k <0 $, $ \frac{-1-3 \eta \lambda_k + \eta^2 \lambda_k^2 }{1+ \eta \lambda_k } $ of multiplicity $ 1$ and $ \frac{-1-\eta \lambda_k + \eta^2 \lambda_k^2 }{1+ \eta \lambda_k } $ of multiplicity $ n-1$.
\item $ \frac{1+ \eta \lambda_h }{| 1+ \eta \lambda_k |} $ of multiplicity 1, $ h=1, \ldots, k-1, k+1, \ldots, d$. 
\item $  \frac{1 }{| 1+ \eta \lambda_k |} $ of multiplicity $ nd-d-n+1$.
\eenu
\end{lemma}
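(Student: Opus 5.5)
The plan is to evaluate the Jacobian of Lemma~\ref{lem:Jac-moja} at the consensus point and exploit its Kronecker structure. At $\bx_i=\bv_k$ for all $i$ we have $\sum_j \bx_j = n\bv_k$ and $\by=\lambda_k\bv_k$. Hence $\bx_i+\frac{\eta}{n}\bV\sum_j\bx_j=(1+\eta\lambda_k)\bv_k$. Set $c:=1+\eta\lambda_k$ and $s:=|c|$, so that the norm appearing in \eqref{eq:Jacob-moja} equals $s$ and $s^2=c^2$.

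\textbf{Simplifying the blocks.} Using symmetry of $\bV$, $\bv_k^T\bV=\lambda_k\bv_k^T$ and $\bv_k^T(I-\bv_k\bv_k^T)=0$, the second-order term in the first line of \eqref{eq:Jacob-moja} vanishes, since $\bv_k^T\bV(I-\bv_k\bv_k^T)\bV=0$. The off-diagonal block, common to all $(i,h)$, becomes
\[
B=\frac{\eta}{n}\Big(\frac{\bV}{s}-\frac{c\,\lambda_k}{s^2}\bv_k\bv_k^T\Big).
\]
In the $\delta_{ih}$ term we have $\sum_j\bx_j^T\bV^T(I-\frac{\eta}{n}\bx_i\sum_\ell\bx_\ell^T\bV)=n\lambda_k(1-\eta\lambda_k)\bv_k^T$. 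This gives the extra diagonal block
\[
D=\frac{I}{s}-\frac{c\,\eta\lambda_k(1-\eta\lambda_k)}{s^2}\bv_k\bv_k^T .
\]
Therefore $J=I_n\otimes D+(\mathds{1}\mathds{1}^T)\otimes B$. Both $D$ and $B$ are diagonal in the orthonormal eigenbasis $\bv_1,\ldots,\bv_d$ of $\bV$.

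\textbf{Diagonalizing.} Take eigenvectors of the form $\bw\otimes\bv_h$.

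For $\bw\perp\mathds{1}$, the eigenvalue is that of $D$ on $\bv_h$. For $h\neq k$ this is $1/s$, giving multiplicity $(n-1)(d-1)$. For $h=k$ it is $\frac1s-\frac{c\eta\lambda_k(1-\eta\lambda_k)}{s^2}$, with multiplicity $n-1$.

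For $\bw=\mathds{1}$, the eigenvalue is that of $D+nB$ on $\bv_h$. For $h\neq k$ this is $(1+\eta\lambda_h)/s$, each with multiplicity one, which gives class 2. For $h=k$ it is $\frac{1+\eta\lambda_k}{s}-\frac{c\eta\lambda_k}{s^2}-\frac{c\eta\lambda_k(1-\eta\lambda_k)}{s^2}$.

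The $1/s$ eigenvalues add up to $(n-1)(d-1)=nd-d-n+1$, which is class 3.

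\textbf{Sign cases for the $\bv_k$ eigenvalues.} It remains to split on the sign of $c$ for the two eigenvalues in the $\bv_k$ direction.

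If $c>0$, then $s=c$. Both eigenvalues reduce to $(1-\eta\lambda_k+\eta^2\lambda_k^2)/c$, giving multiplicity $(n-1)+1=n$.

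If $c<0$, then $s=-c$, so $1/s=-1/c$ and $c/s^2=1/c$. The $\bw\perp\mathds{1}$ eigenvalue becomes $(-1-\eta\lambda_k+\eta^2\lambda_k^2)/c$, with multiplicity $n-1$. The $\bw=\mathds{1}$ eigenvalue becomes $(-1-3\eta\lambda_k+\eta^2\lambda_k^2)/c$, with multiplicity $1$.

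\textbf{Expected difficulty.} The main care point is the bookkeeping in simplifying \eqref{eq:Jacob-moja}. One must verify that the quadratic-in-$\eta$ correction terms collapse to multiples of $\bv_k\bv_k^T$ or vanish. One must also track $|c|$ versus $c$ consistently, since the expansion in Lemma~\ref{lem:Jac-moja} is itself only second order in $\eta$. Once the Kronecker form is established, the spectral count is routine.
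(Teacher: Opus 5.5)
Your proposal is correct and follows essentially the paper's route. Evaluating \eqref{eq:Jacob-moja} at consensus gives exactly the paper's Kronecker form $J(\bv_k)=I\otimes D+\mathds{1}\mathds{1}^T\otimes B$; your $B,D$ match the paper's blocks via $1/|c|=\sign(c)/c$ and $c/|c|^2=1/c$, and the eigenvalues are read off on vectors built from the eigenvectors $\bv_h$ of $\bV$. The only difference is cosmetic: you use the single complete basis $\bw\otimes\bv_h$, with $\bw\in\{\mathds{1}\}\cup\mathds{1}^\perp$, for both signs of $1+\eta\lambda_k$, which makes the multiplicity count immediate, whereas the paper uses single-slot vectors $[0\,\ldots\,\bv_k\,\ldots\,0]$ when $1+\eta\lambda_k>0$.
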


\proof
When computing the Jacobian \eqref{eq:Jacob-moja} at the consensus $ \bx_i=\bv_k $ $\forall \, i$, we get
\beq
 \begin{split}
 J(\bv_k) & = \left. \pde{f(\bx)}{\bx} \right|_{\bx_i = \bv_k} \\
 & = \mathds{1} \mathds{1}^T \otimes \left(  \frac{\sign(1+ \eta \lambda_k) \frac{\eta}{n} \bV - \frac{\eta}{n} \lambda_k \bv_k \bv_k^T }{1 + \eta \lambda_k }  \right) \\
 & \;\;  + I \otimes \left(   \frac{\sign(1+ \eta \lambda_k)  I - \eta \lambda_k (1-\eta \lambda_k ) \bv_k \bv_k^T }{1 + \eta \lambda_k } \right).
 \end{split}
 \label{eq:JacobF-moja}
 \eeq
 where we have used $ \bv_k^T ( I - \bv_k \bv_k^T) =0$.
 The first term represents a factor present in all entries of $J(\bv_k) $, while the second one is present only on the diagonal.
 For $ J(\bv_k) $ there are 3 classes of eigenvectors:
 \benu
 \item The eigenvectors of the first class depend on the sign of $ 1+ \eta \lambda_k $. If $ 1+ \eta \lambda_k >0$, then choose $ \bp = [ 0 \ldots 0 \; \bv_k \; 0 \ldots 0 ] $ and compute the two components of $ J(\bv_k) $:
 \[
 \begin{cases}
&  \frac{\sign(1+ \eta \lambda_k) \frac{\eta}{n} \bV - \frac{\eta}{n} \lambda_k \bv_k \bv_k^T }{1 + \eta \lambda_k } \bv_k 
= \frac{\frac{\eta}{n} (1 -1)}{1+ \eta \lambda_k } \bv_k  =0 \\ 
& 
 \frac{\sign(1+ \eta \lambda_k)  I - \eta \lambda_k (1-\eta \lambda_k ) \bv_k \bv_k^T }{1 + \eta \lambda_k } \bv_k =
  \frac{ 1 - \eta \lambda_k ( 1- \eta \lambda_k ) }{1+ \eta \lambda_k } \bv_k
  \end{cases}
\]
meaning that 
\[
J(\bv_k ) \bp = \frac{ 1 - \eta \lambda_k  + \eta^2 \lambda_k^2  }{1+ \eta \lambda_k } \bp
\]
i.e., $ \frac{ 1 - \eta \lambda_k  + \eta^2 \lambda_k^2  }{1+ \eta \lambda_k }  $ is an eigenvalue of $ J(\bv_k)$ of multiplicity $ n$ (as there are $n$ possible slots for $ \bv_k $ in $ \bp$).
If instead $ 1+ \eta \lambda_k <0 $, then choose the following $ n$ eigenvectors
\[
\bp = \begin{cases} \bp_1 =  [\bv_k \; \bv_k \ldots \bv_k ]  \\
 \bp_\ell = [\phi_1^\ell \bv_k \;  \phi_2^\ell \bv_k \ldots \phi_n^\ell \bv_k ] \; \text{s.t.} \; \sum_i \phi_i^\ell =0 , \\
 \qquad \qquad \qquad  \qquad \qquad \qquad  \;\; \ell=2, \ldots, n
 \end{cases}
 \]
 In correspondence of $ \bp_1 $, one gets 
 \[
 \begin{cases}
&  \frac{\sign(1+ \eta \lambda_k) \frac{\eta}{n} \bV - \frac{\eta}{n} \lambda_k \bv_k \bv_k^T }{1 + \eta \lambda_k } \bv_k 
=\frac{\eta}{n}  \frac{ - 2 \lambda_k }{1+ \eta \lambda_k } \bv_k   \\ 
& 
 \frac{\sign(1+ \eta \lambda_k)  I - \eta \lambda_k (1-\eta \lambda_k ) \bv_k \bv_k^T }{1 + \eta \lambda_k } \bv_k =
  \frac{ -1 - \eta \lambda_k + \eta^2 \lambda_k^2 }{1+ \eta \lambda_k } \bv_k
  \end{cases}
\]
hence 
\[
\begin{split}
J(\bv_k) \bp_1 & = \eta  \frac{ - 2 \lambda_k }{1+ \eta \lambda_k } \bp_1 +  \frac{ -1 - \eta \lambda_k + \eta^2 \lambda_k^2  }{1+ \eta \lambda_k } \bp_1 \\
& = \frac{-1-3 \eta \lambda_k + \eta^2 \lambda_k^2 }{1+ \eta \lambda_k }  \bp_1 
\end{split}
\]
while in correspondence of $ \bp_\ell $ one gets
\[
\begin{split}
J(\bv_k) \bp_\ell & = \begin{bmatrix} \sum_i \phi_i^\ell \left( \frac{\eta}{n}   \frac{ - 2 \lambda_k }{1+ \eta \lambda_k } \right) \bv_k +  \frac{ -1 - \eta \lambda_k + \eta^2 \lambda_k^2  }{1+ \eta \lambda_k } \phi_1 \bv_k \\
\vdots \\
\sum_i \phi_i^\ell \left( \frac{\eta}{n}   \frac{ - 2 \lambda_k }{1+ \eta \lambda_k } \right) \bv_k +  \frac{ -1 - \eta \lambda_k + \eta^2 \lambda_k^2  }{1+ \eta \lambda_k } \phi_n \bv_k
\end{bmatrix} \\
& =  \frac{ -1 - \eta \lambda_k + \eta^2 \lambda_k^2  }{1+ \eta \lambda_k } \bp_\ell 
\end{split}
\]
i.e., $  \frac{ -1 - \eta \lambda_k + \eta^2 \lambda_k^2  }{1+ \eta \lambda_k } $ is an eigenvalue of multiplicity $ n-1$. 

\item For the second class of eigenvectors choosing $ \bq_h = [ \bv_h\, \ldots \, \bv_h] $ for $ h\neq k$, leads to 
\[
 \begin{cases}
&  \frac{\sign(1+ \eta \lambda_k) \frac{\eta}{n} \bV - \frac{\eta}{n} \lambda_k \bv_k \bv_k^T }{1 + \eta \lambda_k } \bv_h 
=\frac{\eta}{n}  \frac{  \lambda_h }{|1+ \eta \lambda_k |} \bv_h  \\ 
& 
 \frac{\sign(1+ \eta \lambda_k)  I - \eta \lambda_k (1-\eta \lambda_k ) \bv_k \bv_k^T }{1 + \eta \lambda_k } \bv_h =
  \frac{ 1  }{|1+ \eta \lambda_k |} \bv_h
  \end{cases}
\]
and hence to 
\[
J(\bv_k) \bq_h = \frac{1 + \eta \lambda_h }{|1+ \eta \lambda_k |} \bq_h 
\]
i.e., $ \frac{1 + \eta \lambda_h }{|1+ \eta \lambda_k |} $ is an eigenvalue of $ J(\bv_k)$. There are $ d-1 $ such eigenvalues. 
\item The remaining $ nd-d-n+1 $ eigenvectors are assembled by considering vectors $ \br =\begin{bmatrix} (\bz^1)^T & (\bz^2)^T & \ldots  & (\bz^n)^T \end{bmatrix}^T$ s.t. $ \bv_k^T \bz^i =0 $ for all $ i =1, \ldots, n $ and all $ h=1, \ldots, k-1, k+1, \ldots, d $ and $ \bq_h^T \br =0 $. Since the number of such constraints is $ d-1+n$, there exist $ nd- n-d+1 $ such vectors $ \bm{r}$, and they can always be chosen so that  $ \sum_{i=1}^n  \bz^i =0 $.
Computing,
\[
 \begin{cases}
&  \frac{\sign(1+ \eta \lambda_k) \frac{\eta}{n} \bV - \frac{\eta}{n} \lambda_k \bv_k \bv_k^T }{1 + \eta \lambda_k } \bz^i 
= \frac{\eta}{n} \frac{ \bV }{|1+ \eta \lambda_k |} \bz^i  \\ 
& 
 \frac{\sign(1+ \eta \lambda_k)  I - \eta \lambda_k (1-\eta \lambda_k ) \bv_k \bv_k^T }{1 + \eta \lambda_k } \bz^i =
  \frac{ 1  }{|1+ \eta \lambda_k |} \bz^i
  \end{cases}
\]
hence
\[
J(\bv_k) \br =   \frac{ 1  }{|1+ \eta \lambda_k |} \begin{bmatrix}  \frac{\eta}{n} \bV \sum_i \bz_i + \bz_1 \\
\vdots \\
 \frac{\eta}{n} \bV \sum_i \bz_i + \bz_n
\end{bmatrix} =   \frac{ 1  }{|1+ \eta \lambda_k |} \br
\]
\end{enumerate}
\qed

\begin{remark}
\label{rem:two-classes}
The discrete-time system \eqref{eq:update} can be seen as a map from the manifold $ (\mathbb{S}^{d-1})^n $ to itself. 
Hence its tangent space is $ T_{\bx}  (\mathbb{S}^{d-1})^n $, of dimension $ n (d-1)$, while the Jacobian $ J(\bx)$ computed above is in dimension $ nd $. 
Such Jacobian is what is sometimes called the ``extrinsic Jacobian'' living in the ambient manifold $ \mathbb{R}^{ nd \times nd}$ rather than in $ T_{\bx}  (\mathbb{S}^{d-1})^n $. 
What decides stability is however the ``intrinsic Jacobian'' living on  $ T_{\bx}  (\mathbb{S}^{d-1})^n $. For Lemma~\ref{lem:moja-F-eigenvalues}, if we look at the eigenvectors, we can easily deduce that the first class of eigenvectors (of cardinality $n$) does not belong to $ T_{\bx}  (\mathbb{S}^{d-1})^n $, while the other two classes are always in $ T_{\bx}  (\mathbb{S}^{d-1})^n $, hence it is only these last two classes that decide the stability character of the equilibria.
\end{remark}

\begin{lemma}
\label{lem:moja-consensus-stab}
For the system \eqref{eq:update-moja}, under Assumption~\ref{ass:V-sym}, and, if $ - \lambda_d > \lambda_1 $,  for $ \eta< \frac{2}{ |\lambda_1 + \lambda_d |} $, then the principal consensus equilibrium point, $ \bx_i=\bv_1 $ $\forall \, i$, is asymptotically stable.
For $ k=2, \ldots, d $, the consensus equilibria $ \bx_i=\bv_k $ $\forall \, i$ are all unstable. 
\end{lemma}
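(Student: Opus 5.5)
We argue by linearization, using the spectrum of the extrinsic Jacobian from Lemma~\ref{lem:moja-F-eigenvalues}. By Remark~\ref{rem:two-classes}, only the eigenvalues of classes 2 and 3 matter. Their eigenvectors span $T_{\bx}(\mathbb{S}^{d-1})^n$, a space of dimension $(d-1)+(nd-d-n+1)=n(d-1)$. The class-1 eigenvectors are radial (normal to the sphere), so they can be discarded.

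Local asymptotic stability at a consensus point then follows if all intrinsic eigenvalues have modulus strictly less than one. Instability follows if at least one of them has modulus strictly greater than one, by the discrete-time linearization principle on the manifold, i.e., in local coordinates.

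\emph{Stability at $\bv_1$.} Take $k=1$. Since $\lambda_1>0$, we have $1+\eta\lambda_1>1$. The class-3 eigenvalue is then $1/(1+\eta\lambda_1)\in(0,1)$.

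The class-2 eigenvalues are $\frac{1+\eta\lambda_h}{1+\eta\lambda_1}$ for $h=2,\ldots,d$. For the upper bound, $\lambda_h<\lambda_1$ (strict, by simplicity of $\lambda_1$) gives $\frac{1+\eta\lambda_h}{1+\eta\lambda_1}<1$. For the lower bound, the condition $\frac{1+\eta\lambda_h}{1+\eta\lambda_1}>-1$ is equivalent to $\eta(\lambda_1+\lambda_h)>-2$. This holds trivially whenever $\lambda_1+\lambda_h\ge 0$. Otherwise the worst case is $h=d$, and the condition becomes $\eta<\frac{2}{|\lambda_1+\lambda_d|}$, which is precisely the hypothesis. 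This case can only arise when $-\lambda_d>\lambda_1$, which is why the statement is conditioned on it.

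Hence all intrinsic eigenvalues lie in $(-1,1)$, and $\bx_i=\bv_1$ for all $i$ is asymptotically stable.

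\emph{Instability at $\bv_k$, $k\ge 2$.} Consider the class-2 eigenvalue with $h=1$, namely $\frac{1+\eta\lambda_1}{|1+\eta\lambda_k|}$. We split on the sign of $\lambda_k$.

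If $\lambda_k\ge -2/\eta$, then $|1+\eta\lambda_k|\le \max(1+\eta\lambda_k,\,1)$. Since $\lambda_1>\lambda_k$ and $\lambda_1>0$, this gives $|1+\eta\lambda_k|<1+\eta\lambda_1$. The eigenvalue therefore exceeds $1$, and the consensus at $\bv_k$ is unstable.

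Two subcases make this precise:
\begin{itemize}
\item If $\lambda_k\ge 0$, the bound is immediate from $\lambda_k<\lambda_1$.
\item If $\lambda_k<0$ with $1+\eta\lambda_k\ge 0$, then $|1+\eta\lambda_k|<1<1+\eta\lambda_1$.
\end{itemize}

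\emph{The main obstacle} is the large-step regime $1+\eta\lambda_k<-1$, i.e. $\eta|\lambda_k|>2$. Here the denominator $|1+\eta\lambda_k|$ may exceed $1+\eta\lambda_1$, so the ratio need not exceed one. I would handle it with the step-size bound: $\eta<\frac{1}{2\max\{\lambda_1,|\lambda_d|\}}$, assumed in Theorem~\ref{thm:moja-DT}, forces $\eta|\lambda_k|<1/2$ and excludes this regime.

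If only the bound $\eta<\frac{2}{|\lambda_1+\lambda_d|}$ is available, I would instead look for another unstable direction:
\begin{itemize}
\item the class-3 eigenvalue $1/|1+\eta\lambda_k|$, which is larger than $1$ exactly when $-2<\eta\lambda_k<0$;
\item or a class-2 eigenvalue with $\lambda_h$ close to $\lambda_k$.
\end{itemize}
Failing that, I would state the instability claim under the smallness condition of Theorem~\ref{thm:moja-DT}.
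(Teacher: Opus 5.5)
Your stability argument at $\bv_1$ is the same as the paper's. The instability half, however, has a genuine gap.

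In the regime $1+\eta\lambda_k<-1$ you assert that $\frac{1+\eta\lambda_1}{|1+\eta\lambda_k|}$ need not exceed one. Under the lemma's hypotheses this assertion is false. Your remedies also fall short:
\begin{itemize}
\item Importing the extra bound $\eta<\frac{1}{2\max\{\lambda_1,|\lambda_d|\}}$ from Theorem~\ref{thm:moja-DT} proves only a weaker statement. The lemma's sole step-size hypothesis is $\eta<\frac{2}{|\lambda_1+\lambda_d|}$, and it imposes none at all when $-\lambda_d\le\lambda_1$.
\item The search for other unstable directions is never carried out. Moreover, the class-3 candidate you name cannot work in this regime, since $1/|1+\eta\lambda_k|<1$ whenever $1+\eta\lambda_k<-1$.
\end{itemize}

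The missing observation is that the stated hypothesis already closes this case, through the very inequality you used for stability. If $1+\eta\lambda_k<0$, then $|1+\eta\lambda_k|=-1-\eta\lambda_k$. Hence $\frac{1+\eta\lambda_1}{-1-\eta\lambda_k}>1$ is equivalent to $\eta(\lambda_1+\lambda_k)>-2$.
\begin{itemize}
\item If $\lambda_1+\lambda_k\ge 0$, this is trivial. This covers every $k$ when $-\lambda_d\le\lambda_1$, which is why no step-size condition is needed in that case.
\item If $\lambda_1+\lambda_k<0$, then $-\lambda_d\ge-\lambda_k>\lambda_1$, so the hypothesis $\eta<\frac{2}{|\lambda_1+\lambda_d|}$ is in force. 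Since $\lambda_1+\lambda_d\le\lambda_1+\lambda_k<0$, we get $\eta|\lambda_1+\lambda_k|\le\eta|\lambda_1+\lambda_d|<2$.
\end{itemize}
Thus the class-2 eigenvalue with $h=1$ lies strictly outside the unit disk for every $k\ge 2$ and every admissible $\eta$. This is exactly the paper's argument: it checks $\eta(\lambda_1+\lambda_k)>-2$ at the largest admissible step $\eta=\frac{2}{|\lambda_1+\lambda_d|}$, which reduces it to $\lambda_1+\lambda_k\ge\lambda_1+\lambda_d$.
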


\proof
Consider the  principal consensus equilibrium point $ \bv_1$ and the eigenvalues of $ J(\bv_1)$ computed in Lemma~\ref{lem:moja-F-eigenvalues}. 
Notice from Remark~\ref{rem:two-classes} that only the second and third class of eigenvalues matter for the stability.
Under Assumption~\ref{ass:V-sym}, $ 1 + \eta \lambda_1 >1$, hence the 3rd class of eigenvalues is inside the unit disk. When $ | \lambda_d | < \lambda_1 $, then also the second class is inside the disk for all $ h=2, \ldots, d$, regardless of the sign of $ \lambda_h$, while when $ - \lambda_d >\lambda_1 $ we need to have $  \frac{1+ \eta \lambda_d }{ 1+ \eta \lambda_1 } >-1 $, i.e., $ \eta <  \frac{-2}{\lambda_1 + \lambda_d } = \frac{2}{| \lambda_1 + \lambda_d |} $. 

When $ k>1$ (consensus at non-principal eigenvectors), then one of the eigenvalues of $ J(\bv_k) $ is $ \frac{1 + \eta \lambda_1 }{|1+ \eta \lambda_k |}  $ which is always $ >1 $. This is obvious when $ \lambda_k \geq 0 $, as $ \lambda_k< \lambda_1$. What needs to be checked is the case of $ \lambda_k $ negative with $ - \lambda_k \gg \lambda_1 $. 

For this case we consider the two situations
\benu
\item If $ 1 + \eta \lambda_k >0 $, then we have $ \frac{1 + \eta \lambda_1 }{ 1+ \eta \lambda_k }>1$ because this is equivalent to $ 1 + \eta \lambda_1 > 1+ \eta \lambda_k $. 

\item If $ 1 + \eta \lambda_k <0 $, then we must show that it is $ - \frac{1 + \eta \lambda_1 }{ 1+ \eta \lambda_k }>1$, i.e., that $ 1 + \eta \lambda_1 > - 1 - \eta \lambda_k$, or $ \eta (\lambda_1 + \lambda_k) >-2$. As $ \lambda_1 + \lambda_k<0$, the expression holds if it holds for the largest possible value of $ \eta $.
Taking $ \eta = \frac{2}{| \lambda_1 + \lambda_d|}$, we obtain the following inequality $ - \frac{2(\lambda_1 + \lambda_k)}{\lambda_1 + \lambda_d}>-2 $, or, equivalently, $ \lambda_1 + \lambda_k > \lambda_1 + \lambda_d$ which is always true. 
\eenu
In both situations $ J(\bv_k) $ has an eigenvalue outside the unit disk. 
\qed

At a bipartite consensus point $ \bx_i = \pm \bv_k $, split the $n$ tokens into two sets $ \mathcal{V}_1 $ and $ \mathcal{V}_2 $, $ \mathcal{V}_1\cup \mathcal{V}_2 = \{ 1, \ldots, n \}$,  according to whether $ \bx_i =\bv_k $ or $ \bx_i =- \bv_k$ at equilibrium. Assume that $ n_1 = | \mathcal{V}_1|  $ tokens are equal to $ \bv_k $ and $ n_2 = | \mathcal{V}_2 | $ equal to $ -\bv_k$, with $ n_1 + n_2 =n$. 
Let $ \nu=\frac{n_1-n_2}{n}$, where we consider w.l.o.g. $ n_1 \geq n_2 $ (hence $ \nu\geq 0$). 

%
 
\begin{lemma}
\label{lem:moja-bip-consensus-stab}
For the system \eqref{eq:update-moja}, under Assumption~\ref{ass:V-sym}, the bipartite consensus equilibria, $ \bx_i=\pm \bv_k $ $\forall \, i$ are all unstable $ \forall \, k =1, \ldots, d$ and $ \forall \, \eta>0$.
\end{lemma}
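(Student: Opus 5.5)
The plan is to linearize the map \eqref{eq:update-moja} intrinsically at a bipartite point and to exhibit, in every case, a tangent direction whose multiplier has modulus larger than $1$. As in Remark~\ref{rem:two-classes}, only tangent directions matter.

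\textbf{Setting up the linearization.} Write $\bx_i=s_i\bv_k$ with $s_i=+1$ on $\mathcal{V}_1$ and $s_i=-1$ on $\mathcal{V}_2$. Then $\by=\nu\lambda_k\bv_k$, and $\tilde{\bx}_i=(s_i+\beta)\bv_k$ with $\beta:=\eta\nu\lambda_k$.

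For this to be an equilibrium with both groups nonempty we need $\mathrm{sgn}(s_i+\beta)=s_i$ for all $i$, i.e. $|\beta|<1$. If instead $|\beta|\ge 1$, the minority group is mapped onto the antipode (or the map is undefined), so the point is not an equilibrium. The case $\nu=0$ gives $\by=0$, which is a polygonal equilibrium and is treated with that class, so I assume $\nu>0$.

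Perturb $\bx_i=s_i\bv_k+\varepsilon\bz^i$ with $\bz^i\perp\bv_k$, and set $\bar\bz=\frac1n\sum_j\bz^j$. To first order the normalization only rescales the tangent part, so the intrinsic linear map is
\[
\bz^i\mapsto \frac{\bz^i+\eta\bV\bar\bz}{|s_i+\beta|}.
\]
It leaves invariant the subspaces $\bz^i=c_i\bv_h$ for each $h\neq k$. On such a subspace it is the $n\times n$ matrix $M_h=D^{-1}(I+\alpha_h\mathds{1}\mathds{1}^T)$, where $D=\diag(|s_i+\beta|)$ and $\alpha_h=\eta\lambda_h/n$.

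\textbf{Easy instability.} Take $c$ supported on two tokens of the same group with $c_i=-c_j$. Then $\bar\bz=0$, and the multiplier is $1/|s_i+\beta|$.
\begin{itemize}
\item If $\lambda_k>0$, this equals $1/(1-\beta)>1$ on $\mathcal{V}_2$.
\item If $\lambda_k<0$, it equals $1/(1-|\beta|)>1$ on $\mathcal{V}_1$.
\end{itemize}
This settles instability whenever the relevant group has at least two elements. That group is $\mathcal{V}_2$ when $\lambda_k>0$ and $\mathcal{V}_1$ when $\lambda_k<0$; the latter always has $n_1\ge 2$ once $n\ge3$, since $n_1\ge n_2$ and $\nu>0$.

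\textbf{The singleton case.} The remaining case is $\lambda_k>0$ with $n_2=1$. Restrict $M_h$ further to vectors constant on $\mathcal{V}_1$, which gives a $2\times 2$ matrix with $a=1+\beta$, $b=1-\beta$:
\[
\begin{bmatrix}(1+\alpha n_1)/a & \alpha/a\\ \alpha n_1/b & (1+\alpha)/b\end{bmatrix}.
\]
I will compute its characteristic polynomial $p(\mu)$ at $\mu=1$. A short computation gives
\[
ab\,p(1)=\beta\bigl(\alpha(n_1-1)-\beta\bigr)=\tfrac{\eta^2\nu(n_1-1)}{n}\,\lambda_k(\lambda_h-\lambda_k),
\]
using $\nu=(n_1-1)/n$. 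Hence, whenever some $h$ has $\lambda_h<\lambda_k$, we get $p(1)<0$, so there is a real eigenvalue $\mu>1$ and the point is unstable. This covers every $k<d$.

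\textbf{Main obstacle.} The residual case is $k=d$ with all $\lambda_h>\lambda_d>0$ and $n_2=1$. Here I would first check $\det=(1+\alpha n)/(1-\beta^2)>1$ (since $\alpha>0$), which already forces an eigenvalue of modulus $>1$ and closes the case. As a fallback I would use the Perron–Frobenius property: $M_h$ is a positive matrix, so its Perron root is at least $\max_i (M_h)_{ii}\ge (1+\alpha)/(1-\beta)>1$. This last argument works for any $h$ with $\lambda_h>0$, and in fact gives a uniform alternative to the $p(1)$ computation whenever such an $h$ exists. Dimension $d=1$, where no tangent directions exist, would be excluded or noted separately.
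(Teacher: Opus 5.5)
Your argument is correct for $\nu>0$ (every $k$, every $\eta>0$), and it takes a genuinely different route from the paper's.

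\textbf{Comparison with the paper.} The paper writes one linearization for all tokens, $\bw_i^+=(1-\eta\nu\lambda_k)\bw_i+\frac{\eta}{n}\bV\sum_j\bw_j$. It gets this by expanding $1/(1+\eta\nu\lambda_k)$ to first order in $\eta$ and applying the $\mathcal{V}_1$ normalization factor to every token. It then projects on $\bv_1$ and reads off the mean-mode eigenvalue $1-\eta\nu\lambda_k+\eta\lambda_1>1$. Its destabilizing mechanism is therefore the positive eigenvalue $\lambda_1$ acting on the common mode.

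Yours is the normalization itself. The group whose unnormalized vector shrinks, $|s_i+\beta|=1-|\beta|<1$, amplifies any zero-sum tangent perturbation supported on it, for every $h$. The $2\times 2$ reduction and the determinant bound then handle a singleton group.

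Your linearization is exact in $\eta$, treats the two groups' normalizations separately, and never leaves the tangent space. The last point matters for $k=1$: there the paper's $\bv_1$-perturbation violates $\bv_k^T\bw_i=0$, so the paper's argument does not actually cover that case, while yours does. You also correctly note that bipartite points are fixed points only when $\eta\nu|\lambda_k|<1$, which the paper overlooks.

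What the paper's idea buys, once made exact, is a case-free argument for $k\neq 1$. It is precisely your Perron fallback with $h=1$, $M_1=D^{-1}(I+\frac{\eta\lambda_1}{n}\mathds{1}\mathds{1}^T)$, which needs no split on the sign of $\lambda_k$ or on group sizes.

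\textbf{The deferred case $\nu=0$.} For $k\neq 1$ you need not defer at all: $D=I$, and $M_1$ has eigenvalue $1+\eta\lambda_1>1$.

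For $k=1$, however, the intrinsic spectrum is $\{1\}\cup\{1+\eta\lambda_h\}_{h\ge 2}$. This is inconclusive when all $\lambda_h<0$ ($h\ge2$) and $\eta<2/|\lambda_d|$. The paper's polygonal lemma cannot absorb this case either, because its proof assumes $|\zeta_1^i|<1$, i.e.\ that $\bs$ is not an eigenvector.

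In fact, for $n=2$ (e.g.\ $\bV=\diag(1,-1)$, $\eta<2$) the point $(\bv_1,-\bv_1)$ lies on the manifold $\{\bx_2=-\bx_1\}$ of polygonal equilibria. This manifold is normally attracting: tangent multipliers are $1$ and transverse ones are $1+\eta\lambda_h\in(-1,1)$. So the point is Lyapunov stable, though not asymptotically stable. That sub-case of the statement therefore holds only in the weaker sense ``not asymptotically stable.'' What your proof, like the paper's, really establishes is the lemma for $\nu>0$, plus $\nu=0$ with $k\neq1$.

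\textbf{Minor point.} ``This covers every $k<d$'' should read ``every $k$ with $\lambda_k>\lambda_d$.'' Ties with $\lambda_d$ fall into your determinant case, which handles them because all eigenvalues are then positive.
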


\proof
At a bipartite consensus equilibrium there exists at least one index $ i \in \mathcal{V}_1 $. Consider the perturbation $ \bx_i = \bv_k + \bw_i  $ with $ \bv_k^T \bw_i =0$ $ \forall \, i$. For this perturbation, the unnormalized next iterate is 
\[
\begin{split}
\tilde{\bx}_i & = \bv_k+\bw_i + \frac{\eta}{n} \bV\sum_j (\bv_k + \bw_j) \\
& = ( 1+ \eta \nu \lambda_k ) \bv_k + \bw_i + \frac{\eta}{n} \bV \sum_j \bw_j 
\end{split}
\]
and its norm
\[
\| \tilde{\bx}_i \| = \left( \alpha^2\left( 1+ \frac{\| \bu_i \|^2 }{\alpha^2} \right) \right)^{\frac{1}{2}}
\]
where $ \alpha := 1 + \eta \nu \lambda_k $, $ \bu_i := \bw_i + \frac{\eta}{n} \bV \sum_j \bw_j $, and we have used $ \bv_k^T \bv_k =1 $ and $ \bv_k^T \bw_i =0 $.
Hence, using the binomial series expansion ($ (1 + \xi)^{-\frac{1}{2}}\approx 1 - \frac{\xi}{2}  + h.o.t. $), 
\[
\begin{split}
\bx_i^+ &= \frac{\tilde{\bx}_i}{\| \tilde{\bx}_i \|}
= \frac{ \alpha \bv_k + \bu_i }{  \alpha \left( 1+ \frac{\| \bu_i \|^2 }{\alpha^2} \right) ^{\frac{1}{2}} } 
\approx \bv_k + \frac{\bu_i}{\alpha} + h.o.t. \\
&= \bv_k + \frac{ \bw_i + \frac{\eta}{n} \bV \sum_j \bw_j }{ 1+ \eta \nu \lambda_k }  + h.o.t.\\
& \approx  \bv_k + ( 1- \eta \nu \lambda_k ) ( \bw_i + \frac{\eta}{n} \bV \sum_j \bw_j ) 
\end{split}
\]
where we have used the expansion $ \frac{1}{1+ \eta \nu \lambda_k } \approx 1-  \eta \nu \lambda_k$. 
The linearized equation is then 
\[
\bw_i^+ =  ( 1- \eta \nu \lambda_k )  \bw_i + \frac{\eta}{n} \bV \sum_j \bw_j .
\]
Expanding all $ \bw_i $ in the orthonormal basis $ \bv_1, \ldots, \bv_d $, and denoting $ \bw_i = \sum_\ell \xi^i_\ell \bv_\ell $, 
\[
\bw_i^+ = \sum_\ell (\xi^i_\ell)^+ \bv_\ell =  ( 1- \eta \nu \lambda_k ) \sum_\ell \xi^i_\ell \bv_\ell + \frac{\eta}{n} \sum_{j, \ell} \lambda_\ell \xi^j_\ell  \bv_\ell .
\]
Projecting along $ \bv_1 $, 
\[
(\xi^i_1)^+ =  ( 1- \eta \nu \lambda_k ) \xi^i_1 +\frac{\eta}{n} \lambda_1 \sum_j \xi^j_1 .
\]
If $ \bm{\xi}_1 := [ \xi_1^1 \ldots \xi_1^n ]^T$, then these $ n$ difference equations can be written in vector form as
\[
\bm{\xi}_1^+ =  ( 1- \eta \nu \lambda_k ) \bm{\xi}_1 + \frac{\eta}{n} \lambda_1 \mathds{1} \mathds{1}^T \bm{\xi}_1 .
\]
We need to compute the eigenvalues of the $ n \times n$ matrix $ M(k,1) = ( 1- \eta \nu \lambda_k )I + \frac{\eta}{n} \lambda_1 \mathds{1} \mathds{1}^T $. 
One such eigenvalues is associated to the consensus vector $ \mathds{1} \in \mathbb{R}^d$:
\[
 M(k,1) \mathds{1} = ( 1- \eta \nu \lambda_k ) \mathds{1} +  \eta  \lambda_1 \mathds{1} = ( 1- \eta \nu \lambda_k +\eta \lambda_1 ) \mathds{1} .
 \]
 If $ k\neq 1 $ then either $ \lambda_k >0 $ and $ \lambda_k < \lambda_1$, or $ \lambda_k<0$. Since $ \nu<1$, in both cases $ 1- \eta \nu \lambda_k +\eta \lambda_1 >1 $ and the bipartite equilibrium point is unstable. 
 If $ k=1$, then $ 1+\eta(1-\nu) \lambda_1 >1$, i.e., also in this case we have found an eigenvalue outside the unit disk, and hence the equilibrium point is unstable. 
\qed

\begin{lemma}
\label{lem:moja-polygonal-stab}
For the system \eqref{eq:update-moja}, under Assumption~\ref{ass:V-sym}, all polygonal equilibria are unstable.
\end{lemma}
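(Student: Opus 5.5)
At a polygonal equilibrium we have $\by = \frac{1}{n}\bV\sum_j \bx_j = 0$. Since $\bV$ is nonsingular (Assumption~\ref{ass:V-sym}), this is the same as $\sum_j \bx_j = 0$. I would linearize the map \eqref{eq:update-moja} on the tangent space $T_{\bx}(\mathbb{S}^{d-1})^n$, as in the proof of Lemma~\ref{lem:moja-bip-consensus-stab}, and exhibit a tangent direction with an eigenvalue of modulus larger than $1$.

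\textbf{Linearization.} Take perturbations $\bx_i + \bw_i$ with $\bx_i^T\bw_i = 0$. Because $\by = 0$ at the equilibrium, the unnormalized iterate is
\[
\tilde{\bx}_i = \bx_i + \bw_i + \tfrac{\eta}{n}\bV\textstyle\sum_j \bw_j .
\]
Its norm is $1 + \tfrac{\eta}{n}\bx_i^T\bV\sum_j\bw_j + O(\|\bw\|^2)$. Normalizing and keeping first-order terms gives
\[
\bw_i^+ = \bw_i + \tfrac{\eta}{n}\,\Pi_{\bx_i}\bV \textstyle\sum_j \bw_j .
\]
This is simply the extrinsic Jacobian of Lemma~\ref{lem:Jac-moja} evaluated at $\by = 0$ and restricted to the tangent space: the diagonal blocks reduce to $I$ and the common part to $\frac{\eta}{n}\Pi_{\bx_i}\bV$.

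\textbf{Spectrum.} Write $\bs = \sum_j \bw_j$. The linear map is the identity plus a perturbation that depends on $\bw$ only through $\bs$. Its kernel $\{\bw:\ \bs = 0\}$ is invariant and carries eigenvalue $1$. The remaining eigenvalues are $1 + \mu$, where $\mu$ ranges over the eigenvalues of the reduced $d\times d$ map $\bs \mapsto \frac{\eta}{n}\sum_i \Pi_{\bx_i}\bV\bs$. So I would study the matrix
\[
M = \sum_i \Pi_{\bx_i}\bV = n\bV - \sum_i \bx_i\bx_i^T\bV = (nI - S)\bV, \qquad S := \sum_i \bx_i\bx_i^T .
\]
There is one caveat. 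We need the corresponding eigenvector $\bw$ to be tangent, but this is automatic: for $\mu \ne 0$ we have $\bw_i = \mu^{-1}\frac{\eta}{n}\Pi_{\bx_i}\bV\bs$, which satisfies $\bx_i^T\bw_i = 0$.

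\textbf{Key step: $M$ has an eigenvalue with positive real part.} The matrix $S$ is positive semidefinite with $\operatorname{tr} S = n$. I claim $nI - S$ is positive definite. If $S\bu = n\bu$ with $\|\bu\| = 1$, then $\sum_i (\bx_i^T\bu)^2 = n$, which forces $\bx_i = \pm\bu$ for all $i$. That is a bipartite configuration, and $\sum_j\bx_j = 0$ then forces $n$ even with $n_1 = n_2$. This degenerate subcase (a ``balanced bipartite'' polygon) must be handled separately, as below. Otherwise $P := nI - S \succ 0$, so $M = P\bV$ is similar to $P^{1/2}\bV P^{1/2}$. By Sylvester's law of inertia, $M$ has real eigenvalues with the same inertia as $\bV$. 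Since $\lambda_1 > 0$, $M$ has a positive eigenvalue $\mu$, giving a Jacobian eigenvalue $1 + \frac{\eta}{n}\mu > 1$. Hence the equilibrium is unstable for every $\eta > 0$.

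\textbf{Degenerate subcase.} When $\bx_i = \pm\bu$ with $n_1 = n_2$, the matrix $P = n(I - \bu\bu^T)$ is only semidefinite. Here $\bu$ need not be an eigenvector of $\bV$. Restrict to $\bs \perp \bu$. There the reduced map is $n\,\Pi_{\bu}\bV$, a symmetric compression of $\bV$ to $\bu^\perp$. By Cauchy interlacing, its largest eigenvalue is at least $\lambda_2$, which could be negative. To cover that case I would instead pick $\bs \in \bu^\perp$ and maximize $\bs^T\bV\bs$ over $\bu^\perp$. If $\dim \ge 2$, i.e.\ $d \ge 3$, this subspace intersects $\operatorname{span}\{\bv_1, \bv_2\}$ nontrivially… but that alone does not guarantee positivity.

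I expect this degenerate balanced-bipartite case to be the main obstacle. I would try first to exploit that $\bu$ is arbitrary at such points: the equilibria then form a continuum. The aim is to show the compression $\Pi_{\bu}\bV\Pi_{\bu}$ has a positive eigenvalue unless $\bu = \bv_1$. If the only positive eigenvalue of $\bV$ is $\lambda_1$ and $\bu = \bv_1$, this fails, and I would fall back on second-order terms to conclude instability.
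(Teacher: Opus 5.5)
\textbf{Verdict: the generic case is correct, but there is a genuine gap.} The balanced bipartite case $\bx_i=\pm\bu$, $n_1=n_2$ is left open. Your plan for it cannot work, and for $n=2$ the statement is actually false there.

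Your generic case is sound, and sounder than the paper's proof. The paper perturbs every token along $\bv_1$, which is not tangent unless $\zeta_1^i=\bs_i^T\bv_1=0$, and then projects onto $\bv_1$ although that subspace is not invariant. The number it obtains, $\frac{\eta}{n}\lambda_1\sum_i(1-(\zeta_1^i)^2)$, is only the Rayleigh quotient $\frac{\eta}{n}\bv_1^TM\bv_1$ of your reduced matrix $M=(nI-S)\bV$, not an eigenvalue of the Jacobian. Your exact reduction of the tangent linearization to $M$, plus congruence and Sylvester inertia when $nI-S\succ0$, is the rigorous version of that idea.

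\textbf{Why your plan for the degenerate case fails.} Your aim (a positive eigenvalue of the compression unless $\bu=\bv_1$) is false. If $\lambda_1$ is the only positive eigenvalue of $\bV$, then $\bV$ is negative definite on $\bv_1^\perp$. By continuity it is then negative definite on $\bu^\perp$ for every $\bu$ in an open neighbourhood of $\pm\bv_1$, not just at $\bu=\pm\bv_1$. For all such $\bu$ the tangent linearization has eigenvalue $1$ with multiplicity $(n-1)(d-1)$, and its other eigenvalues $1+\eta\theta_j$ lie in $(0,1)$ for small $\eta$. So it is inconclusive on a whole open family of polygonal equilibria. Also, for $n=2$ every polygonal equilibrium is of this form, so your main argument covers nothing there.

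\textbf{Why no second-order argument can rescue it.} For $n=2$ the lemma is false. Take $d=2$ and $\bV=\diag(1,-1)$, which satisfies Assumption~\ref{ass:V-sym}. Set $\bx_1=(\cos\alpha,\sin\alpha)^T$ and $\bx_2=-(\cos\beta,\sin\beta)^T$ near the polygonal equilibrium $(\bv_1,-\bv_1)$. Up to terms of second order in $(\alpha,\beta)$, one step gives $\alpha^+-\beta^+=(1-\eta)(\alpha-\beta)$ and $\alpha^++\beta^+=\alpha+\beta$. Hence the continuum $\{(\bu,-\bu)\}$ of fixed points is normally attracting, and each of its points near $\pm\bv_1$ is Lyapunov stable. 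This includes points with $\bu$ not an eigenvector, where the paper's proof asserts instability. The same conclusion shows up in $W$: since $\bx_1+\bx_2\perp\bx_1-\bx_2\approx2\bu$, we have $W\geq0$ near the continuum, vanishing only on it.

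\textbf{What the lemma needs.} It needs an extra hypothesis. One option is $\lambda_2>0$, in which case your interlacing bound $\theta_1\geq\lambda_2>0$ finishes the degenerate case. The other is $n\geq3$; for even $n\geq4$ the degenerate case is true, but it needs a nonlocal argument rather than a local expansion, as follows.
\begin{itemize}
\item With at least two tokens at each of $\pm\bu$, you can perturb by an arbitrarily small amount so that $\sum_j\bx_j=\epsilon\bv_1$, which makes $W<0$.
\item For $\eta\leq 1/(2\|\bV\|)$, Lemma~\ref{lem:layp-moja} keeps $W(\bx(t))\leq W(\bx(0))<0$. It forces every limit point to be an equilibrium with $W<0$, that is $\bx_i=\pm\bv_k$ with $n_1\neq n_2$.
\item Any such configuration has some token at distance at least $1$ from its position in the balanced configuration. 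So the trajectory must leave a fixed neighbourhood, which gives instability.
\end{itemize}
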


\proof
At a polygonal equilibrium point $ \bx=\bs =[\bs_1 \, \dots \, \bs_n] $, consider the perturbation $ \bx_i = \bs_i + \bw_i  $ with $ \bs_i^T \bw_i =0$ $ \forall \, i$. For this perturbation, the unnormalized next iterate is 
\[
\begin{split}
\tilde{\bx}_i & = \bs_i+\bw_i + \frac{\eta}{n} \bV\sum_j (\bs_j + \bw_j) \\
& =\bs_i + \frac{\eta}{n} \underbrace{\bV\sum_j \bs_j}_{=0} + \bw_i + \frac{\eta}{n}  \bV \sum_j \bw_j 
\end{split}
\]
and its norm
\[
\| \tilde{\bx}_i \| = \left( 1+ \| \bu_i \|^2 +2  \frac{\eta}{n}  \bs_i^T \bV \sum_j \bw_j  \right)^{\frac{1}{2}}
\]
where $ \bu_i := \bw_i + \frac{\eta}{n} \bV \sum_j \bw_j $, and we have used $ \bs_i^T \bw_i =0 $.
Hence, using the binomial series expansion ($ (1 + \xi)^{-\frac{1}{2}}\approx 1 - \frac{\xi}{2}  + h.o.t. $), 
\[
\begin{split}
\bx_i^+ &= \frac{\tilde{\bx}_i}{\| \tilde{\bx}_i \|}
= \frac{  \bs_i + \bu_i }{ \left( 1+ \| \bu_i \|^2 + 2  \frac{\eta}{n}  \bs_i^T \bV \sum_j \bw_j  \right) ^{\frac{1}{2}} } \\
&\approx (\bs_i + \bu_i) \left(1- \frac{\| \bu_i \|^2}{2} -   \frac{\eta}{n}  \bs_i^T \bV \sum_j \bw_j \right)  + h.o.t. \\
&= \bs_i + \bu_i -   \frac{\eta}{n}  \bs_i^T \bV \sum_j \bw_j \bs_i  + h.o.t.\\
& = \bs_i + \bw_i + \frac{\eta}{n} (I - \bs_i \bs_i^T) \bV \sum_j \bw_j + h.o.t.
\end{split}
\]
The linearized equation is then 
\[
\bw_i^+ = \bw_i + \frac{\eta}{n} (I - \bs_i \bs_i^T) \bV \sum_j \bw_j .
\]
Expanding all $ \bw_i $ and $ \bs_i $ in the orthonormal basis $ \bv_1, \ldots, \bv_d $, and denoting $ \bw_i = \sum_\ell \xi^i_\ell \bv_\ell $ and $ \bs_i = \sum_\ell \zeta^i_\ell \bv_\ell $, 
\[
\begin{split}
\bw_i^+ & = \sum_\ell (\xi^i_\ell)^+ \bv_\ell \\
& =  \sum_\ell \xi^i_\ell \bv_\ell + \frac{\eta}{n} \left( I - \sum_h \zeta^i_h \bv_h \sum_\ell \zeta^i_\ell \bv_\ell^T \right)  \sum_{j, \ell} \lambda_\ell \xi^j_\ell  \bv_\ell \\
& =  \sum_\ell \xi^i_\ell \bv_\ell + \frac{\eta}{n} \left( I - \sum_h \zeta^i_h \bv_h \sum_\ell \zeta^i_\ell  \lambda_\ell \sum_j  \xi^j_\ell  \right).
\end{split}
\]
Recall that $ \bV$ has always at least one positive eigenvalue $\lambda_1 >0$. We can always choose $ \bm{w}$ s.t. each $ \bm{w}_j $ has a nonzero component in the direction of the associated eigenvector $ \bv_1$: $ \xi^j_1\neq 0$.
Projecting along $ \bv_1 $
\[
(\xi^i_1)^+ = \xi^i_1 + \frac{\eta}{n} \left( 1-{\zeta_1^i}  \sum_\ell \zeta^i_\ell  \lambda_\ell \sum_j  \xi^j_\ell \right).
\]
If the perturbation is such that $ \bw_i $ has only a component along $ \bv_1 $, $ \xi_1^i\neq 0$, $ \xi_j^i=0$ $ \forall \, j \neq i$, then
\[
(\xi^i_1)^+ = \xi^i_1 + \frac{\eta}{n} ( 1-{\zeta_1^i}^2) \lambda_1 \sum_j \xi^j_1 .
\]
If $ \bm{\xi}_1 :=  \begin{bmatrix} \xi_1^1 &\ldots &\xi_1^n\end{bmatrix}^T$ and $  \bm{\zeta}_1 = \begin{bmatrix} \zeta^1_1 & \ldots & \zeta^n_1 \end{bmatrix}^T$, then in vector form
\[
\begin{split}
\bm{\xi}_1^+ & =  \bm{\xi}_1 + \frac{\eta}{n} \lambda_1 \left( I- Z_1^2 \right)\mathds{1} \mathds{1}^T \bm{\xi}_1 \\
& = \left( I+ \frac{\eta}{n} \lambda_1 \left( I- Z_1^2 \right)\mathds{1} \mathds{1}^T \right) \bm{\xi}_1 .
\end{split}
\]
where $ Z_1 = \diag(\bm{\zeta}_1)$. 
Since $ \bm{s} $ is not an eigenvector of $\bV$, it is $ |\zeta^i_1 |< 1$. From $ \lambda_1 >0$, it is then $\rho(I +\eta \lambda_1(I-Z_1^2) \mathds{1} \mathds{1}^T)>1 $, meaning that when $ \bm{\xi}_1 = \epsilon \mathds{1}$ for some scalar coefficient $ \epsilon$ the system diverges, hence the polygonal equilibrium point $ \bs$ is unstable. 
\qed 

\begin{remark}
    In both Lemma~\ref{lem:moja-bip-consensus-stab} and Lemma~\ref{lem:moja-polygonal-stab} the perturbation $ \bw_i$ used in the proof is ``intrinsic'', i.e., it belongs to the tangent space $ T_{\bv_k}\mathbb{S}^{d-1} $ resp. $ T_{\bs_i}\mathbb{S}^{d-1} $ at the equilibrium point.
\end{remark}
For the multiagent Oja system \eqref{eq:update-moja} it is also possible to show that the trajectories are generically converging to an equilibrium point. 

\begin{lemma}
\label{lem:layp-moja}
The function $ W(\bx) = -\frac{1}{2 n^2} \left( \sum_{j=1}^n \bx_j^T \bV \sum_{j=1}^n \bx_j  \right)$ is a Lyapunov function for  \eqref{eq:update-moja}. When $  \eta\leq \frac{1}{2 \|\bV\|}  $ it guarantees that  \eqref{eq:update-moja} converges to one of the equilibria determined in Lemma~\ref{lem:moja-equil1}.
\end{lemma}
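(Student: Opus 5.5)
The plan is to show that $W$ decreases strictly along trajectories except at equilibria, and then to use LaSalle's invariance principle for maps on the compact manifold $(\mathbb{S}^{d-1})^n$.

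\textbf{Setup.} Write $\bm{m}=\frac1n\sum_j \bx_j$ and $\by=\bV\bm{m}$, so that $W=-\frac12 \bm{m}^T\bV\bm{m}$. Let $\bm{m}^+=\frac1n\sum_i\bx_i^+$ and $\Delta=\bm{m}^+-\bm{m}$. Since $\bV$ is symmetric,
\[
W(\bx^+)-W(\bx) = -\by^T\Delta -\tfrac12 \Delta^T\bV\Delta ,
\]
which gives the bound
\[
W(\bx^+)-W(\bx)\le -\by^T\Delta+\tfrac12\|\bV\|\,\|\Delta\|^2 .
\]

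\textbf{Decrease of $W$.} Put $N_i=\|\bx_i+\eta\by\|$. Then
\[
\bx_i^+-\bx_i=\frac{\eta\by+(1-N_i)\bx_i}{N_i}.
\]
The residual $1-N_i$ is handled through the identity
\[
N_i^2-1=2\eta\bx_i^T\by+\eta^2\|\by\|^2 ,
\]
so that $1-N_i=-(2\eta\bx_i^T\by+\eta^2\|\by\|^2)/(1+N_i)$.

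It is cleaner to project onto the tangent space: $\bx_i^+-\bx_i$ is $\eta\,\Pi_{\bx_i}\by/N_i$ plus a component along $\bx_i$ that is of second order in $\|\Pi_{\bx_i}\by\|$. Indeed, $\bx_i^+$ lies on the sphere, and the chord length satisfies $\|\bx_i^+-\bx_i\|^2=2(1-\bx_i^T\bx_i^+)$.

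The target is the inequality
\[
\by^T(\bx_i^+-\bx_i)\ \ge\ c\,\eta\,\|\Pi_{\bx_i}\by\|^2
\]
for some $c>0$, to be obtained from $\by^T\bx_i^+-\by^T\bx_i=\frac{\eta\|\by\|^2-(N_i-1)\,\by^T\bx_i}{N_i}$. Substituting $N_i-1$ gives
\[
\by^T(\bx_i^+-\bx_i)=\frac{\eta\bigl(\|\by\|^2(1+N_i-\eta\bx_i^T\by)-2(\bx_i^T\by)^2\bigr)}{N_i(1+N_i)} .
\]
Then use $\|\by\|\le\|\bV\|$ and $\eta\|\bV\|\le\frac12$, which gives $N_i\in[\frac12,\frac32]$. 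Combined with $(\bx_i^T\by)^2\le\|\by\|^2$, these reduce the numerator to a multiple of $\|\by\|^2-(\bx_i^T\by)^2=\|\Pi_{\bx_i}\by\|^2$ plus controlled remainders.

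Separately, Jensen's inequality gives $\|\Delta\|^2\le\frac1n\sum_i\|\bx_i^+-\bx_i\|^2$, and a chord-length bound gives $\|\bx_i^+-\bx_i\|\le C\eta\|\Pi_{\bx_i}\by\|$. Combining everything,
\[
W(\bx^+)-W(\bx)\le -\frac{\eta}{n}\sum_i\Bigl(c-\tfrac{C^2}{2}\eta\|\bV\|\Bigr)\|\Pi_{\bx_i}\by\|^2 ,
\]
which is nonpositive when $\eta\le\frac1{2\|\bV\|}$.

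\textbf{Main obstacle.} The hard step is the constant bookkeeping in this last inequality: showing that the condition $\eta\le 1/(2\|\bV\|)$ is exactly enough to make the coefficient negative. The delicate case is $\bx_i^T\by<0$ close to $-\|\by\|$, where $N_i$ is small. I would first try to work directly with the exact identity above and bound $1+N_i-\eta\bx_i^T\by\ge 2-\eta\|\by\|$ or similar. If the constants fail, I would fall back on a slightly smaller admissible $\eta$.

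\textbf{Conclusion.} $W$ is continuous and bounded below on the compact forward-invariant set $(\mathbb{S}^{d-1})^n$, and the map is continuous. LaSalle's principle for discrete-time systems therefore says that every $\omega$-limit set lies in the largest invariant subset of $\{W(\bx^+)=W(\bx)\}$. On that set $\Pi_{\bx_i}\by=0$ for all $i$, which is exactly the characterization of equilibria in the proof of Lemma~\ref{lem:moja-equil1}. Hence trajectories converge to the set of consensus, bipartite consensus, or polygonal equilibria. Since $W$ is constant on each $\omega$-limit set, the limit points lie in this equilibrium set, as claimed.
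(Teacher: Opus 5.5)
Your route differs from the paper's. The paper expands $\bx_i^+$ to second order in $\eta$, discards the $O(\eta^3)$ terms, and controls only the truncated increment. It obtains $\frac{\eta}{n}(-1+2\eta\|\bV\|)\sum_j\|g_j\|^2$, whose coefficient vanishes at $\eta=1/(2\|\bV\|)$, so the summability it then invokes fails at the endpoint. Your exact identities avoid truncation altogether: $W(\bx^+)-W(\bx)=-\by^T\Delta-\frac12\Delta^T\bV\Delta$, the closed form of $\by^T(\bx_i^+-\bx_i)$, and the chord length. Your LaSalle ending plays the role of the paper's summability argument; both give convergence to the equilibrium set rather than to a single point.

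The step you flag, however, is a genuine gap as sketched. Constants $c$, $C$ that hold uniformly over all configurations cannot close the last inequality at $\eta=1/(2\|\bV\|)$. For instance, take $\lambda_1=-\lambda_d=\|\bV\|$. Near consensus at $\bv_1$ one has $N_i\to 3/2$, and the best possible $c$ is $5/9$. Near consensus at $\bv_d$ one has $N_i\to 1/2$, and $C^2$ must be at least $4$. Hence $c-\frac{C^2}{2}\eta\|\bV\|<0$.

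Your fallback $1+N_i-\eta\bx_i^T\by\ge 2-\eta\|\by\|$ is worse. It leaves a remainder $-\eta\|\by\|^3$ that is not proportional to $\|\Pi_{\bx_i}\by\|^2$ and does not vanish at consensus equilibria. So no choice of $\eta>0$ rescues it; any remainder must itself be $O(\|\Pi_{\bx_i}\by\|^2)$.

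What closes the argument is the identity $N_i^2-(1+\eta r_i)^2=\eta^2\|\Pi_{\bx_i}\by\|^2$, where $r_i:=\bx_i^T\by$. Set $D_i:=N_i+1+\eta r_i$, which is at least $1$ since $N_i\ge\frac12$ and $1+\eta r_i\ge\frac12$. The identity gives $\bx_i^+-\bx_i=\frac{1}{N_i}\bigl(\eta\Pi_{\bx_i}\by-\frac{\eta^2}{D_i}\|\Pi_{\bx_i}\by\|^2\bx_i\bigr)$, and hence, exactly,
\[
\by^T(\bx_i^+-\bx_i)=\frac{\eta(1+N_i)}{N_iD_i}\,\|\Pi_{\bx_i}\by\|^2,\qquad
\|\bx_i^+-\bx_i\|^2=\frac{2\eta^2}{N_iD_i}\,\|\Pi_{\bx_i}\by\|^2 .
\]
Combine these token by token, keeping the common factor $1/(N_iD_i)$, rather than through separate uniform constants. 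Using $N_i\le 3/2$, $D_i\le 3$ and $1+N_i-\eta\|\bV\|\ge 1$, for $\eta\le 1/(2\|\bV\|)$ you get
\[
W(\bx^+)-W(\bx)\le-\frac{\eta}{n}\sum_{i=1}^n\frac{1+N_i-\eta\|\bV\|}{N_iD_i}\,\|\Pi_{\bx_i}\by\|^2\le-\frac{2\eta}{9n}\sum_{i=1}^n\|\Pi_{\bx_i}\by\|^2 .
\]
This is the strict, uniform decrease your LaSalle step needs, endpoint included. The same computation in fact gives strict decrease for every $\eta<1/\|\bV\|$.
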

\proof
Since the state space is compact,  all functions arising in the expansion of the Lyapunov function $ W(\bx)  $ (and hence also of its increment) are uniformly bounded. For the second order truncation of the increment $ W(\bx^+) - W(\bx) $, it is possible to compute an explicit bound on $ \eta $ for which the increment is guaranteed to be nonincreasing.
From \eqref{eq:update-moja2},  using the binomial series expansion $ (1 + \xi)^{-\frac{1}{2}}\approx 1 - \frac{\xi}{2}  + \frac{3}{8} \xi^2 + O(\xi^3) $  
\[
\begin{split}
 & \bx_i^+  = \frac{\tilde{\bx}_i}{\| \tilde{\bx}_i \| } \\
 & = (\bx_i + \eta \by )  \left(1- \eta \bx_i^T \by  + \frac{\eta^2}{2} \left( 3 (\bx_i^T \by )^2 - \| \by\|^2   \right) + O(\eta^3)  \right) \\
 & = \bx_i +\eta g_i + \eta^2 \left( \frac{3}{2} r_i^2 \bx_i  - \frac{1}{2} \| \by\|^2 r_i - r_i \by \right) + O(\eta^3)  
 \end{split}
 \]
 where we have indicated $ r_i := \bx_i^T \by $ and $ g_i= g(\bx_i ) := (I - \bx_i \bx_i^T) \by $.
 The increment of $ W$ up to second order in $ \eta $ is then 
 \[
 \begin{split}
 & W(\bx^+) - W(\bx) = \\
 & =  -\frac{1}{2 n^2} \left( \sum_{j=1}^n (\bx_j^+)^T \bV \sum_{j=1}^n \bx_j^+  - \sum_{j=1}^n \bx_j^T \bV \sum_{j=1}^n \bx_j  \right) \\
 = & - \frac{\eta^2}{2 n^2} \left( 2 n \by^T \sum_j \left( \frac{3}{2} r_i^2 \bx_i - \frac{1}{2} \| \by\|^2 \bx_i - r_i \by \right) \right. \\
 & \left. + \sum_j g_j^T \bV \sum_j g_j \right) - \frac{\eta}{n} \sum_j g_j^T \by + O(\eta^3) \\
 = &  - \frac{\eta^2}{ n} \left(  \sum_j  \left( \frac{3}{2} r_j^2 \by^T \bx_j - \frac{1}{2} \| \by\|^2 \by^T \bx_j - r_j \by^T \by \right) \right. \\
 & \left. + \frac{1}{2n}  \sum_j g_j^T \bV \sum_j g_j \right)  - \frac{\eta}{n} \sum_j \| g_j \|^2 + O(\eta^3)  \\
 = & - \frac{\eta^2}{ n} \left( \frac{3}{2} \sum_j r_i  (r_i^2 - \| \by \|^2 ) + \frac{1}{2n}  \sum_j g_j^T \bV \sum_j g_j  \right) \\
 &  - \frac{\eta}{n} \sum_j \| g_j \|^2 + O(\eta^3)  \\
  = & \;  \frac{\eta^2}{ n} \left( \frac{3}{2} \sum_j r_i  \| g_j \|^2 - \frac{1}{2n}  \sum_j g_j^T \bV \sum_j g_j  \right) \\
 &  - \frac{\eta}{n} \sum_j \| g_j \|^2 + O(\eta^3)  \\
 \end{split}
 \]
 where we have used idempotency of the projection $ (I - \bx_i \bx_i^T)$ to get
 \[
 \begin{split}
 g_i^T \by & = \by^T (I - \bx_i \bx_i^T) \by \\
 & = \by^T (I - \bx_i \bx_i^T)(I - \bx_i \bx_i^T) \by = \| g_i\|^ 2\\
 \|\by\|^2 - r_i^2 & = \by^T \by - \by^T \bx_i \bx_i^T \by = \by^T (I - \bx_i \bx_i^T) \by \\
 & = \by^T (I - \bx_i \bx_i^T) (I - \bx_i \bx_i^T) \by = g_i^T g_i = \| g_i \|^2 .
 \end{split}
 \]
 Since $ \| \sum_j g_j \|^2 \leq n \sum_j \| g_j \|^2 $, it is
 \[
 - n \| \bV \| \sum_j \| g_j \|^2 \leq \sum_j g_j^T \bV \sum_j g_j \leq n \| \bV \| \sum_j \| g_j \|^2 .
 \]
  Furthermore
 \[
 - \| \bV \| \leq - \| \by \| \leq r_i \leq  \| \by \| \leq \| \bV \| .
 \]
 These lead to the inequality
 \[
 \begin{split}
  W(\bx^+)  & - W(\bx)   \leq  - \frac{\eta}{n} \sum_j \| g_j \|^2 \\
 & +\frac{\eta^2}{n} \left( \frac{3}{2} \| \bV \| \sum_j \| g_j \|^2 + \frac{1}{2} \| \bV \| \sum_j \| g_j \|^2 \right) 
 \end{split}
 \]
 or
 \[
 W(\bx^+) - W(\bx) \leq \frac{\eta}{n}  \left(  - 1 + 2 \eta \| \bV \| \right) \sum_j \| g_j \|^2.
 \]
 Hence when $ \eta\leq \frac{1}{2 \|\bV\|} $ we have $  W(\bx^+) - W(\bx) \leq 0$. 
 Since the sequence $ W(\bx (k) ) $ (where $k$ is the time index) is nonincreasing and bounded below, it must converge. Moreover, summability of the increments implies $ \sum_{k=0}^\infty \sum_{i=1}^n \| g(\bx_i(k)) \|^2 < \infty $, hence $  \| g(\bx_i(k)) \| \to 0 $ for each $i$. 
 At an equilibrium point $ \bx $ it must then be $ g(\bx_i) = (I - \bx_i \bx_i) \by =0 $ for all $ i$, i.e., $ \by $ must be collinear with $ \bx_i $. Recalling Lemma~\ref{lem:moja-equil1}, this implies that $ \bx $ must be a consensus or a bipartite consensus equilibrium point. 
 \qed

\noindent {\bf Proof of Theorem~\ref{thm:moja-DT}}
The proof of the theorem is simply the composition of the results of Lemmas~\ref{lem:moja-F-eigenvalues},~\ref{lem:moja-consensus-stab},~\ref{lem:moja-bip-consensus-stab},~\ref{lem:moja-polygonal-stab} and~\ref{lem:layp-moja}.
Lemmas~\ref{lem:moja-consensus-stab},~\ref{lem:moja-bip-consensus-stab},~\ref{lem:moja-polygonal-stab} say that only the consensus point $ \bv_1 $ corresponding to the principal eigenvalue of $\bV$ is locally asymptotically stable, while all other equilibria are unstable. 
Since all trajectories of \eqref{eq:update-moja} have a limit point, generically such limit point must be $ \bv_1$. 
\qed

\begin{remark}
The condition $ \eta< \min \left(\frac{2}{ |\lambda_1 + \lambda_d |}, \, \frac{1}{2 \max \{  \lambda_1 , | \lambda_d | \} }  \right) $ is sufficient but not necessary. 
Recall from Lemma~\ref{lem:moja-consensus-stab} that the condition $\eta< \frac{2}{ |\lambda_1 + \lambda_d |} $ is needed only when $ | \lambda_d|> \lambda_1$. 
\end{remark}

\section{Single-head self-attention dynamics}
\label{sec:self-att}

Let us now consider the self-attention system \eqref{eq:update}, which includes the attention matrix $ A(\bx)$. For this model, an ``individual'' weighted average with state dependent weights given by the attention coefficients, $ \bm{m}_i =\bm{m}_i(\bx)  =   \sum_{j=1}^n  A_{ij}(\bx) \bx_j $ replaces the simple average used in the multiagent Oja model. 
Consequently, the total action of all agents on agent $i$ varies from agent to agent:  $\by_i = \by_i (\bx) =  \bV\sum_{j=1}^n A_{ij}(\bx)   \bm{\bx}_j $. 

\subsection{Equilibria}

In addition to the three classes of equilibria already obtained for the multiagent Oja flow, in the self-attention dynamics we have an extra class, due to the fact that  $\by_i $ now differs from agent to agent.

\begin{lemma}
\label{lem:self-equil1}
The system \eqref{eq:update} has the following classes of equilibria
\benu
\item consensus: $ \bx_i = \bv_k $ $ \forall \, i$, and $ k=1, \ldots, d$;
\item bipartite consensus: $ \bx_i = \pm \bv_k $ $ \forall \, i$ and $ k=1, \ldots, d$;
\item polygonal equilibria: $ \{ \bx_i \in \mathbb{S}^{d-1} \; \text{s.t.} \;  \bV \sum_{j=1}^n A_{ij}(\bx) \bx_j =0\} $, $ i=1, \ldots, n$;
\item clustering equilibria: $  \{ \bx_i \in \mathbb{S}^{d-1} \; \text{s.t.} \;  \varphi_i \bx_i  = \bV \sum_{j=1}^n A_{ij}(\bx) \bx_j   \}$ for some scalars $ \varphi_i$, $ i=1, \ldots, n$, $ k=1, \ldots, d$.
\eenu

\end{lemma}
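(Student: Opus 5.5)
The plan mirrors the proof of Lemma~\ref{lem:moja-equil1}, now with the agent-dependent influence $\by_i = \bV \sum_{j=1}^n A_{ij}(\bx)\bx_j$ in place of the common $\by$. Writing \eqref{eq:update} as $\tilde{\bx}_i = \bx_i + \eta \by_i$, $\bx_i^+ = \tilde{\bx}_i/\|\tilde{\bx}_i\|$, an equilibrium requires $\tilde{\bx}_i$ to be collinear with $\bx_i$. Since $\tilde{\bx}_i - \bx_i = \eta \by_i$, this holds iff $\Pi_{\bx_i}\by_i = 0$ for every $i$. Equivalently, either $\by_i = 0$ or $\by_i = \varphi_i \bx_i$ for some scalar $\varphi_i \neq 0$ (with $1+\eta\varphi_i \neq 0$ so that the normalization is defined). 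I would then classify these conditions.

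\textbf{Case $\by_i=0$ for all $i$.} This is exactly the polygonal class, $\bV\sum_j A_{ij}(\bx)\bx_j=0$ for all $i$.

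\textbf{Case $\by_i=\varphi_i\bx_i$ in general.} This is the definition of the clustering class, so at this level of generality it holds by construction. Agents with $\by_i = 0$ fit the same description with $\varphi_i = 0$, so mixed situations are also covered.

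\textbf{Consensus and bipartite consensus.} What needs an argument is that the special configurations in which all $\bx_i$ equal $\pm$ a common unit vector $\bx$ force $\bx$ to be an eigenvector of $\bV$.
\begin{itemize}
\item At consensus, $\bx_i = \bx$ for all $i$. Row stochasticity of $A$ gives $\sum_j A_{ij}\bx_j = \bx$, so $\by_i = \bV\bx$. The condition $\bV\bx = \varphi_i\bx$ then says $\bx$ is an eigenvector $\bv_k$. Assumption~\ref{ass:V-sym} (nonsingularity) rules out $\varphi_i = 0$, so consensus never falls into the polygonal class.
\item At bipartite consensus, $\bx_j = \sigma_j\bx$ with $\sigma_j \in \{\pm1\}$. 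Then $\sum_j A_{ij}\bx_j = \big(\sum_j A_{ij}\sigma_j\big)\bx =: c_i\bx$, so $\by_i = c_i \bV\bx$ and the condition becomes $c_i\bV\bx = \varphi_i\sigma_i\bx$. If some $c_i \neq 0$, $\bx$ is an eigenvector $\bv_k$. If all $c_i = 0$, the configuration is polygonal rather than a bipartite eigenvector equilibrium.
\end{itemize}
Conversely, for $\bx = \pm\bv_k$ the collinearity condition is clearly satisfied, so every such configuration is an equilibrium. This gives classes 1 and 2.

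\textbf{Main obstacle.} The bipartite case is where the attention matters. The coefficients $c_i = \sum_j A_{ij}\sigma_j$ differ from agent to agent, unlike the single $\nu$ of the Oja case, and they depend on $\bx$ through the softmax. One has to check that no sign or degeneracy issue breaks the eigenvector conclusion. It suffices that a single $c_i$ be nonzero, and the degenerate all-zero situation is absorbed into the polygonal class. Everything else is direct bookkeeping of the collinearity condition.
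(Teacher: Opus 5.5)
Your proposal is correct for the classification and follows the paper's route: you rerun the collinearity argument of Lemma~\ref{lem:moja-equil1} with the agent-dependent $\by_i$, and the clustering class holds by construction. Your explicit bipartite computation via $c_i=\sum_j A_{ij}(\bx)\sigma_j$, which absorbs the all-$c_i=0$ case into the polygonal class, is more careful than the paper's ``the argument is identical''.

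The one thing to correct is your converse, together with the parenthetical $1+\eta\varphi_i\neq 0$. A token is fixed only if $1+\eta\varphi_i>0$, since otherwise normalization returns $-\bx_i$. For example, a bipartite configuration at $\pm\bv_k$ with $1+\eta\lambda_k\gamma_1^k<0<1+\eta\lambda_k\gamma_2^k$ (possible for $\lambda_k<0$ and large $\eta$) is sent in one step to consensus at $-\bv_k$. So without a step-size bound such as that of Lemma~\ref{lemm:simplif-eta}, not every $\pm\bv_k$ configuration is an equilibrium. This does not affect the necessity direction, which is what the lemma asserts.
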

\proof
Once we replace $ \by $ with $\by_i  =  \sum_{j=1}^n A_{ij}(\bx)  \bV \bm{\bx}_j $, the proof of the first three cases is identical to that of Lemma~\ref{lem:moja-equil1}. 
The clustering equilibria correspond to $ \by_i \in \ker(I - \bx_i \bx_i^T) $, or, equivalently, $ \by_i $ aligned with $ \bx_i $, $ i=1, \ldots, n$, but not necessarily aligned with an eigenvector $ \bv_k$, i.e., $ \by_i = \varphi_i \bx_i $ for some scalar $ \varphi_i$, but possibly $ \by_i \nparallel  \bv_k $ $ \forall \, k$.  
\qed

\begin{remark}
Clustering equilibria own their name to the fact that typically multiple agents are found at the same value. In particular we say that $ \bx_1, \ldots, \bx_n$ are at an $ m$-clustering equilibrium point if $ \bx_i \in \{ \bm{w}_1, \ldots, \bm{w}_m \}$ with $ 1 \leq m \leq n$, i.e., if $ \bx_1, \ldots, \bx_n$ cluster at the $m$ vectors $ \bm{w}_1, \ldots, \bm{w}_m$. Notice that some $ \bm{w}_i $ can be eigenvectors of $\bV$.
\end{remark}

\begin{remark}
A clustering equilibrium can be a consensus, i.e, $ \bx_i  = \bx_j $ for all $ i,j$, but with $ \bx_i $ which is not aligned with any eigenvector of $\bV$, i.e., $ \bx_i \nparallel  \bv_k $ $ \forall \, k$. We refer to these as 1-clustering, while the ``consensus'' characterization is reserved for the case $ \bx_i \parallel \bv_k $ for some $k$. A 2-clustering instead typically is s.t. $ \bx_i \in \{ \bm{w}_1, \bm{w}_ 2\}$ $ \forall \, i$, with $ \bm{w}_1 \neq - \bm{w}_2 $.
\end{remark}

\begin{proposition}
\label{prop:clustering-eq}
The system  \eqref{eq:update} has a clustering equilibrium $ \bx \in (\mathbf{S}^{d-1})^n $ iff $ \exists $ scalars $ \theta_1, \ldots, \theta_n $ s.t. the matrix $I_n \otimes I_d  -(\Theta \otimes I_d) (A(\bx) \otimes I_d)(I_n\otimes \bV)$ is singular, where $ \Theta=\diag(\theta_1, \ldots, \theta_n )$. 
\end{proposition}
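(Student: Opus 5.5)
The plan is to rewrite the clustering condition of Lemma~\ref{lem:self-equil1} as a single linear system in the stacked vector $\bx$. By Lemma~\ref{lem:self-equil1}, $\bx$ is a clustering equilibrium iff for every $i$ there is a scalar $\varphi_i$ with $\varphi_i \bx_i = \bV \sum_j A_{ij}(\bx)\bx_j$. Since the tokens are on the unit sphere, $\varphi_i = \bx_i^T \by_i$. For now assume $\varphi_i \neq 0$; the case $\varphi_i=0$ corresponds to $\by_i=0$, i.e. to the polygonal case, and is set aside. Then define $\theta_i := 1/\varphi_i$, so that the condition becomes
\[
\bx_i = \theta_i \sum_{j=1}^n A_{ij}(\bx)\, \bV \bx_j, \qquad i=1,\ldots,n .
\]

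Next I will stack these $n$ equations using Kronecker products. The vector $\big(\sum_j A_{ij}\bV\bx_j\big)_i$ equals $(A(\bx)\otimes I_d)(I_n\otimes \bV)\bx$. Premultiplying by $\Theta\otimes I_d$ inserts the factor $\theta_i$ in block $i$. The whole set of equilibrium conditions is therefore equivalent to
\[
\left( I_n\otimes I_d - (\Theta\otimes I_d)(A(\bx)\otimes I_d)(I_n\otimes\bV)\right)\bx = 0 .
\]

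For the ``only if'' direction, a clustering equilibrium gives a nonzero vector $\bx$ (each block has unit norm) in the kernel of this matrix, evaluated at the same $\bx$. The matrix is therefore singular for that choice of $\theta_i$.

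For the ``if'' direction, I must read the statement as a fixed-point condition. The matrix depends on $\bx$ through $A(\bx)$, so singularity alone only gives some kernel vector. We need a kernel vector that coincides with the $\bx$ at which $A$ is evaluated and whose blocks have unit norm. Given such an $\bx$, each block reads $\bx_i = \theta_i \by_i$. If $\theta_i = 0$, then $\bx_i = 0$, which contradicts $\bx_i \in \mathbb{S}^{d-1}$; hence $\theta_i \neq 0$ and $\by_i = \theta_i^{-1}\bx_i$. This says $\by_i$ is collinear with $\bx_i$, i.e. $\Pi_{\bx_i}\by_i = 0$, which is exactly the clustering condition of Lemma~\ref{lem:self-equil1}.

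The main obstacle is precisely this self-consistency. Singularity at an arbitrary $\bx$ does not by itself produce an equilibrium: a kernel vector with blocks rescaled to unit norm changes $A(\bx)$. I expect to handle it by stating explicitly that singularity is required at the point $\bx$ under consideration, with $\bx$ itself spanning part of the kernel. The block normalization can then be absorbed into the free scalars $\theta_i$, since rescaling block $i$ by $c_i>0$ amounts to replacing $\theta_i$ by an appropriate multiple only when $A$ is unchanged. That holds at the equilibrium point itself, which is the interpretation I will adopt. Finally, I will remark that the eigenvector (consensus and bipartite consensus) cases are included as special instances, with $\theta_i = 1/\lambda_k$ up to the factor $\pm\nu$.
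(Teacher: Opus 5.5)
Your argument is the paper's own: you rewrite the clustering condition as $\bx_i=\theta_i\bV\sum_j A_{ij}(\bx)\bx_j$, stack it as $\bigl(I_n\otimes I_d-(\Theta\otimes I_d)(A(\bx)\otimes I_d)(I_n\otimes\bV)\bigr)\bx=0$, and read off singularity. Your caution about the ``if'' direction is justified. It is exactly what the paper's one-line ``nontrivial solutions iff singular'' step glosses over. Since $(\Theta\otimes I_d)(A(\bx)\otimes I_d)(I_n\otimes\bV)=(\Theta A(\bx))\otimes\bV$ and $A(\bx)\mathds{1}=\mathds{1}$, the choice $\Theta=\lambda_1^{-1}I_n$ puts $\mathds{1}\otimes\bv_1$ in the kernel for \emph{every} $\bx$. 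So the equivalence holds only in the fixed-point reading you adopt, where $\bx$ itself is in the kernel.

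Three small repairs are needed.

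First, drop the rescaling remark. Even with $A$ frozen, a non-uniform blockwise rescaling cannot be absorbed into the $\theta_i$: since $A$ is full, block $j$ enters every row. You also do not need the remark once $\bx$ itself is the kernel vector.

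Second, in the self-attention model the bipartite factor is $\gamma^k$, not $\nu$; $\nu$ belongs to the uniform-attention model. Concretely, $\theta_i=1/(\lambda_k\gamma_1^k)$ or $1/(\lambda_k\gamma_2^k)$ according to whether $i\in\mathcal{V}_1$ or $i\in\mathcal{V}_2$.

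Third, $\varphi_i=0$ for only \emph{some} $i$ is not the polygonal case, since Lemma~\ref{lem:self-equil1} requires $\by_i=0$ for all $i$. Such equilibria admit no finite $\theta_i$. An example is a bipartite point with $n_1\alpha_1^k=n_2\alpha_2^k$ and $n_1\neq n_2$, which gives $\gamma_1^k=0\neq\gamma_2^k$. These equilibria must be excluded explicitly from the ``only if'' direction; the paper makes the same exclusion tacitly.
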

\proof 
In vector form, the clustering equilibrium condition $ \bx_i =  \theta_i  \bV \sum_{j=1}^n A_{ij}(\bx)  \bm{\bx}_j $ becomes $ \bx = (\Theta \otimes I_d) (A(\bx) \otimes I_d)(I_n\otimes \bV)  \bx$, where $ \bx^T = [ \bx_1^T \, \ldots\, \bx_n^T ]$. 
Rewriting it as $  (I_n \otimes I_d -(\Theta \otimes I_d) (A(\bx) \otimes I_d)(I_n\otimes \bV)) \bx =0 $, the algebraic equation has nontrivial solutions if and only if the matrix $I_n \otimes I_d  -(\Theta \otimes I_d) (A(\bx) \otimes I_d)(I_n\otimes \bV)$ is singular.
\qed

\begin{remark}
From the proof of Proposition~\ref{prop:clustering-eq}, it is $ \bx = (\Theta \otimes I_d) (A(\bx) \otimes I_d)(I_n\otimes \bV)  \bx$, where $ \bx^T = [ \bx_1^T \, \ldots\, \bx_n^T ]$. It follows that if $ \bx_1, \ldots, \bx_n$ form an $m$-clustering equilibrium point, then also the antipodal point  $ -\bx_1, \ldots, -\bx_n $ is an $ m$-clustering equilibrium point because $ A(\bx) = A(-\bx)$. If in a clustering equilibrium only some (but not all) $ \bx_i $ flip sign, then the new state is typically no longer an equilibrium point.
\end{remark}

While the attention matrix $ A(\bx) $ is a function of the state even at the equilibrium point (with the exception of a consensus state), its rank is however fixed for various classes of equilibria. 

\begin{lemma}
\label{lem:cons-Aij}
For a consensus state $ \bx_i = \bv_k $ $ \forall \, i$ we have $ A_{ij}(\bx) = \frac{1}{n} $, i.e., the attention matrix $ A(\bx) $ is the rank-1 uniform matrix $ A(\bx)  =\frac{1}{n} \mathds{1} \mathds{1}^T $. For a bipartite consensus state $ \bx_i  = \pm \bv_k $ $ \forall \, i$, the attention matrix $A(\bx)$ has rank 2.
For an $m$-clustering equilibrium $ \bx$ the rank of $ A(\bx)$ is $m$. \end{lemma}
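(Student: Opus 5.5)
The plan is to read off the structure of $A(\bx)$ directly from the softmax formula. The key observation is that $A(\bx) = D(\bx)^{-1} E(\bx)$. Here $E_{ij}(\bx) = e^{\langle \bQ \bx_i, \bK \bx_j\rangle}$ and $D(\bx) = \diag(E(\bx)\mathds{1})$. Since $D(\bx)$ is a positive diagonal matrix, $\rank A(\bx) = \rank E(\bx)$, and it suffices to compute the rank of the Gram-exponential matrix $E(\bx)$. The rows and columns of $E$ are indexed by tokens, and entry $(i,j)$ depends only on the pair of values $(\bx_i,\bx_j)$. Whenever tokens take only $m$ distinct values $\bw_1,\ldots,\bw_m$, we can therefore write $E = P \, \hat E \, P^T$. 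In this factorization, $P\in\{0,1\}^{n\times m}$ is the assignment matrix with $P_{i a}=1$ iff $\bx_i = \bw_a$, and $\hat E_{ab} = e^{\langle \bQ \bw_a, \bK \bw_b\rangle}$ is $m\times m$. Because every cluster is nonempty, $P$ has full column rank $m$, so $\rank E = \rank \hat E$.

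For consensus, $m=1$: every entry of $E$ equals the same positive number $e^{\langle \bQ\bv_k,\bK\bv_k\rangle}$, so each row of $A$ is $\frac1n\mathds{1}^T$, i.e.\ $A=\frac1n\mathds{1}\mathds{1}^T$, of rank 1. For bipartite consensus, $\bw_1=\bv_k$ and $\bw_2=-\bv_k$. Setting $c=\langle \bQ\bv_k,\bK\bv_k\rangle$, we get
\[
\hat E = \begin{bmatrix} e^{c} & e^{-c} \\ e^{-c} & e^{c}\end{bmatrix},
\]
whose determinant is $e^{2c}-e^{-2c}$. This is nonzero iff $c\neq 0$, giving rank 2; in the degenerate case $c=0$ the rank drops to 1. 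We also need both groups $\mathcal{V}_1$ and $\mathcal{V}_2$ nonempty; otherwise the state is a consensus. The statement implicitly excludes both degenerate cases, and I would note them explicitly as genericity conditions.

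For a general $m$-clustering, the claim reduces to nonsingularity of $\hat E$. This is the main obstacle, since it is not automatic: with $\bQ^T\bK$ arbitrary, $\hat E$ can be singular for special configurations. The case $\bQ^T\bK=0$ is the trivial example, giving rank 1. My first attempt would be to treat $\det \hat E$ as a real-analytic function of $(\bw_1,\ldots,\bw_m)$ and of the weights, and show it is not identically zero. For instance, at $\bQ^T\bK = \beta I$ with distinct unit vectors $\bw_a$, $\hat E$ is the kernel matrix of the Gaussian-type exponential kernel $e^{\beta\langle \bw_a,\bw_b\rangle}$, which is strictly positive definite on distinct points of the sphere. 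Hence $\det\hat E\neq 0$ there, and by analyticity $\det \hat E\neq 0$ for generic weights and configurations. The lemma's rank-$m$ statement should thus be proven as holding generically, namely whenever $\hat E$ is nonsingular. In that case $\rank A(\bx)=\rank \hat E = m$, and in all cases $\rank A(\bx)\le m$ from the factorization.
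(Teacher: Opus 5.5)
Your proof is correct. It rests on the same structural fact as the paper's proof: the entries of $A(\bx)$ depend only on the cluster labels of $i$ and $j$. Your version is sharper, though.

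The paper writes out \eqref{eq:A-bipart} and calls rank $2$ ``obvious''. For $m$-clusterings it counts $m^2$ distinct numerators and $m$ denominators and invokes a ``block counting argument''. That reasoning only justifies $\rank A(\bx)\le m$.

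Your factorization $A(\bx)=D(\bx)^{-1}P\hat E P^T$, with $P$ of full column rank, supplies the missing step $\rank A(\bx)=\rank \hat E$. The caveats you raise are genuine. If $\bv_k^T\bQ^T\bK\bv_k=0$ (for instance for every $k$ when $\bQ^T\bK$ is skew-symmetric), then $\alpha_1^k=\alpha_2^k=1$ and $\beta_1^k=\beta_2^k=n$. In that case \eqref{eq:A-bipart} collapses to $\frac1n\mathds{1}\mathds{1}^T$, yet the state is still a bipartite consensus equilibrium, so the lemma's rank-$2$ claim fails as stated. Likewise, having $m^2$ distinct numerator values does not by itself make $\hat E$ nonsingular.

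Two small points on your genericity remark:
\begin{itemize}
\item The positive-definite example needs $\beta>0$. Then $e^{\beta\langle \bw_a,\bw_b\rangle}=e^{\beta}e^{-\beta\|\bw_a-\bw_b\|^2/2}$ is a Gaussian kernel.
\item Analyticity gives $\det\hat E\neq 0$ only for generic $(\bQ^T\bK,\bw_1,\ldots,\bw_m)$. This does not automatically carry over to $m$-clustering equilibria, which form a special, weight-dependent set of configurations.
\end{itemize}
Neither point affects correctness, because your final statement is the conditional one: $\rank A(\bx)\le m$ always, with equality iff $\hat E$ is nonsingular.
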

\proof
At a consensus equilibrium $ \bx_i = \bv_k $ for all $ i $, and $ A_{ij}(\bx) $ is composed of all equal terms
\[ 
A_{ij}(\bx) = \frac{e^{  \bv_k^T \bQ^T\bK\bv_k  } }{\sum_{\ell=1}^n e^{\bv_k^T \bQ^T\bK \bv_k } }= \frac{1}{n}.
\]
At a bipartite consensus point $ \bx_i = \pm \bv_k $, split the $n$ tokens into two sets $ \mathcal{V}_1 $ and $ \mathcal{V}_2 $, $ \mathcal{V}_1\cup \mathcal{V}_2 = \{ 1, \ldots, n \}$,  according to whether $ \bx_i =\bv_k $ or $ \bx_i =- \bv_k$ at equilibrium. Assume that $ n_1 = | \mathcal{V}_1|  $ tokens are equal to $ \bv_k $ and $ n_2 = | \mathcal{V}_2 | $ equal to $ -\bv_k$, with $ n_1 + n_2 =n$.

Four different entries appear in $ A_{ij}(\bx)$, two due to the denominator (depending on whether $ \bx_i =\bv_k $ or $ \bx_i =- \bv_k $), and two due to the numerator (depending on whether $ \bx_i$ and $ \bx_j $ have the same sign). 
More specifically, denoting  $ \alpha_1^k = e^{  \bv_k^T \bQ^T\bK \bv_k }  $ and $ \alpha_2^k = e^{ - \bv_k^T \bQ^T\bK \bv_k } $, then the entries of the attention matrix are 
\[ 
A_{ij}(\bx) 
= \begin{cases} 
\frac{\alpha_1^k}{n_1 \alpha_1^k + n_2 \alpha_2^k} & \text{if}\; i \in \mathcal{V}_1, j \in \mathcal{V}_1 \\ 
\frac{\alpha_2^k}{n_1 \alpha_1^k + n_2 \alpha_2^k}  & \text{if}\; i \in \mathcal{V}_1, j \in \mathcal{V}_2 \\ 
\frac{\alpha_2^k}{n_1 \alpha_2^k + n_2 \alpha_1^k}   & \text{if}\; i \in \mathcal{V}_2, j \in \mathcal{V}_1 \\ 
\frac{\alpha_1^k}{n_1 \alpha_2^k + n_2 \alpha_1^k}   & \text{if}\; i \in \mathcal{V}_2, j \in \mathcal{V}_2  .
   \end{cases}
\]
Letting $ \beta_1^k = n_1 \alpha_1^k + n_2 \alpha_2^k$, and $ \beta_2^k = n_1 \alpha_2^k + n_2 \alpha_1^k$ and assuming w.l.o.g. that the first $ n_1 $ agents are in $ \mathcal{V}_1 $ and the last $ n_2 $ in $ \mathcal{V}_2 $, the attention matrix is 
\beq
A(\bx) = \begin{bmatrix} \frac{1}{\beta_1^k} ( \alpha_1^k & \ldots & \ldots & \alpha_1^k & \alpha_2^k & \ldots & \alpha_2^k) \\
\vdots & & & \vdots & \vdots & & \vdots \\
\frac{1}{\beta_1^k} ( \alpha_1^k & \ldots & \ldots & \alpha_1^k & \alpha_2^k & \ldots & \alpha_2^k) \\
\frac{1}{\beta_2^k} ( \alpha_2^k &  \ldots &  \ldots & \alpha_2^k & \alpha_1^k & \ldots &\alpha_1^k) \\
\vdots & & & \vdots  & \vdots & &  \vdots \\
\frac{1}{\beta_2^k} ( \alpha_2^k & \ldots &  \ldots & \alpha_2^k & \alpha_1^k & \ldots & \alpha_1^k) 
\end{bmatrix} ,
\label{eq:A-bipart}
\eeq
from which it is obvious that the rank must be 2. 

As for an $m$-clustering equilibrium point with vectors $ \bm{w}_1, \ldots, \bm{w}_m$, following a similar procedure it is easy to realize that since $ \bm{w}_i \neq \bm{w}_j $ and $ \bQ^T \bK$ is not symmetric,  there are $ m^2 $ different entries in the numerators of $A_{ij}$, and only $m$ in the denominators. Rearranging the entries as in \eqref{eq:A-bipart}, each row of $A$ has exactly $m$ different terms, and a block counting argument leads to $ \rank(A)=m$.
\qed

\subsection{Stability analysis}

As in the proof of Lemma~\ref{lem:cons-Aij}, let $ \alpha_1^k = e^{  \bv_k^T \bQ^T\bK \bv_k }  $, $ \alpha_2^k = e^{ - \bv_k^T \bQ^T\bK \bv_k } $ and $ \beta_1^k = n_1 \alpha_1^k + n_2 \alpha_2^k$, $ \beta_2^k = n_1 \alpha_2^k + n_2 \alpha_1^k$. Denote also $ \gamma_1^k = \frac{n_1 \alpha_1^k - n_2 \alpha_2^k }{\beta_1^k} $ and $ \gamma_2^k =  \frac{n_2 \alpha_1^k - n_1 \alpha_2^k }{\beta_2^k} $.

\begin{theorem}
\label{thm:stab-main-self}
For the self-attention dynamics~\eqref{eq:update}, under Assumption~\ref{ass:V-sym}, and, if $ -\lambda_d > \lambda_1 $, for $ \eta<  \frac{2}{ |\lambda_1 + \lambda_d |} $, the consensus equilibrium associated to the principal eigenvector $ \bv_1 $ of $\bV$ is always locally asymptotically stable, while the other consensus equilibria $ \bv_k$, $ k=2, \ldots, d $, are all unstable. 
A bipartite consensus equilibrium $ \bx_i = \pm \bv_k$ $ \forall \, i$ is asymptotically stable if $ \eta< \min\left( \frac{1}{\lambda_1 } , \, \frac{1}{ |\lambda_d |} \right) $ and the following inequalities are all satisfied
\benu
\item  
$ \frac{ |1 - (1-\eta \lambda_k \gamma_i^k)  \eta  \lambda_k\gamma_i^k | } {1+ \eta \lambda_k \gamma_i^k} <1 $, $ i =1,2 $;  
\item  $  a_j^k d_j^k - b_j^k c_j^k < 1 $ and $ a_j^k + d_j^k <1+ a_j^k d_j^k - b_j^k c_j^k $, $\, \forall \, j=1, \ldots, k-1, k+1,\ldots,  d $, where
 \beq
 \begin{split}
 a_j^k & = \frac{ 1+ \eta n_ 1 \frac{\alpha_1^k}{\beta_1^k}  \lambda_j  }{1+ \eta \lambda_k \gamma_1^k} , \qquad 
 b_j^k  = \frac{ \eta n_2 \frac{\alpha_2^k}{\beta_1^k}  \lambda_j  }{1+ \eta \lambda_k \gamma_1^k}, \\
 c_j^k  & = \frac{ \eta n_1 \frac{\alpha_2^k}{\beta_2^k}   \lambda_j  }{1+ \eta \lambda_k \gamma_2^k} , \qquad 
 d_j^k  =\frac{  1+  \eta n_ 2 \frac{\alpha_1^k}{\beta_2^k}   \lambda_j  }{1+ \eta \lambda_k \gamma_2^k  }
 \end{split}
 \label{eq:a-d} 
 \eeq
 \item $  \frac{1}{1+ \eta \lambda_k \gamma_i^k} <1$, $ i=1,2$.
 \eenu
All polygonal equilibria are unstable.
\end{theorem}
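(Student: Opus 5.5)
The approach is to linearize \eqref{eq:update} directly on the tangent space, with intrinsic perturbations $\bx_i = \bx_i^* + \bw_i$, $(\bx_i^*)^T\bw_i=0$, as in the proofs of Lemmas~\ref{lem:moja-bip-consensus-stab} and~\ref{lem:moja-polygonal-stab}. By Remark~\ref{rem:two-classes}, this avoids the non-tangent eigenvalues of the extrinsic Jacobian. The new ingredient relative to Section~\ref{sec:multi-oja} is the variation of the attention matrix. I will write $A_{ij}(\bx^*+\bw) = A_{ij}(\bx^*) + \delta A_{ij}$, where $\delta A_{ij}$ is linear in $\bw$ and $\sum_j \delta A_{ij}=0$ because $A$ is row stochastic. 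The contribution of $\delta A$ to $\by_i$ is $\bV\sum_j \delta A_{ij}\bx_j^*$, and it will be killed by the tangent projection $\Pi_{\bx_i^*}$ in the cases where all $\bx_j^*$ are collinear with $\bx_i^*$ and with an eigenvector.

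\emph{Consensus.} At $\bx_i^*=\bv_k$ for all $i$, Lemma~\ref{lem:cons-Aij} gives $A=\frac1n\mathds{1}\mathds{1}^T$. Since $\sum_j\delta A_{ij}=0$ and $\bx_j^*=\bv_k$ for all $j$, we get $\sum_j\delta A_{ij}\bx_j^*=0$ exactly at first order. The linearization therefore coincides with that of the Oja system \eqref{eq:update-moja}, and Lemmas~\ref{lem:moja-F-eigenvalues} and~\ref{lem:moja-consensus-stab} give asymptotic stability of $\bv_1$ under the stated bound on $\eta$, and instability of $\bv_k$ for $k\ge 2$.

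\emph{Bipartite consensus.} Here $\sum_j \delta A_{ij}\bx_j^* = \pm\big(\sum_{j\in\mathcal V_1}\delta A_{ij}-\sum_{j\in\mathcal V_2}\delta A_{ij}\big)\bv_k$, which is parallel to $\bv_k$. After applying $\bV$, this term stays parallel to $\bv_k$, and the normalization projects it out at first order. The attention variation thus again drops out, and $A$ can be frozen at \eqref{eq:A-bipart}.

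Repeating the computation of Lemma~\ref{lem:moja-bip-consensus-stab}, the unnormalized iterate for $i\in\mathcal V_1$ is $(1+\eta\lambda_k\gamma_1^k)\bv_k + \bw_i + \eta\bV\sum_j A_{ij}\bw_j$, and analogously for $\mathcal V_2$ with $\gamma_2^k$. The linearization becomes $\bw_i^+ = (\bw_i+\eta\bV\sum_jA_{ij}\bw_j)/(1+\eta\lambda_k\gamma_{c(i)}^k)$, where $c(i)$ is the class of $i$. The assumption $\eta<\min(1/\lambda_1,1/|\lambda_d|)$ guarantees $1+\eta\lambda_k\gamma_i^k>0$, since $|\gamma_i^k|\le 1$, so no sign issues arise.

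Next I expand $\bw_i=\sum_\ell\xi^i_\ell\bv_\ell$; each direction $\bv_j$ with $j\neq k$ decouples. Within direction $j$, the matrix $A$ has block-constant structure, and the dynamics split into two parts. The first part consists of the within-class zero-sum modes ($\sum_{i\in\mathcal V_c}\xi^i_j=0$). On these modes $A$ acts as zero, so the eigenvalues are $1/(1+\eta\lambda_k\gamma_c^k)$; this is condition 3. The second part is the $2\times 2$ dynamics of the class averages. Its entries are exactly $a_j^k,b_j^k,c_j^k,d_j^k$ in \eqref{eq:a-d}, and its Schur stability is encoded by the Jury conditions $\det<1$ and $\operatorname{tr}<1+\det$ (together with $\operatorname{tr}>-1-\det$, which holds automatically for small $\eta$); this is condition 2.

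Condition 1 should come from the one remaining channel, the $\bv_k$ component, or equivalently the extrinsic radial eigenvalues. These are analogous to the first class in Lemma~\ref{lem:moja-F-eigenvalues}, with $\lambda_k$ replaced by $\lambda_k\gamma_i^k$. To derive them I plan to compute the extrinsic Jacobian blocks restricted to $\bv_k$-directions, mimicking \eqref{eq:JacobF-moja}. Instability of the case $k=1$, $\nu$ small, etc., is then read off from violations of these inequalities.

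\emph{Polygonal equilibria.} Here $\by_i=0$, so $\bx_i^+=\bx_i$ exactly at zeroth order. The linearization is $\bw_i^+=\bw_i+\eta\Pi_{\bs_i}\bV\big(\sum_jA_{ij}\bw_j+\sum_j\delta A_{ij}\bs_j\big)$. I will look for an expanding direction as in Lemma~\ref{lem:moja-polygonal-stab}, using a perturbation along $\bv_1$ together with $\lambda_1>0$ and $|\zeta^i_1|<1$. The attention term complicates this, and it is the main obstacle.

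To handle it, I would first choose the simultaneous perturbation $\bw_i=\epsilon\Pi_{\bs_i}\bv_1$. I would then show that the quadratic form $\sum_i\bw_i^T(\text{increment})$ is positive: the $A$-part yields $\eta\lambda_1\sum_{i,j}A_{ij}(\cdot)$, whose positivity follows from positivity of $A$. For the $\delta A$ part, one can alternatively argue via a Lyapunov-type function; its second-order contribution should be controllable for small $\eta$, but I expect this step to need the most care.
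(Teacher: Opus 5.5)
\textbf{What works.} Your consensus and bipartite parts are sound. For the bipartite case your route is cleaner than the paper's. The paper computes the full extrinsic Jacobian (Lemmas~\ref{lem:Jac-bip-cons} and~\ref{lem:lin-stab-self2}) and then discards its radial class. You instead linearize on the tangent space and observe that $\sum_j\delta A_{ij}\bx_j^*$ is parallel to $\bv_k$, and vanishes at consensus, so $\Pi_{\bv_k}$ removes it. Your zero-sum and class-average modes reproduce exactly the third and second eigenvalue classes of Lemma~\ref{lem:lin-stab-self2}, with the correct total $n(d-1)$. The extra Jury inequality you mention does hold under the stated bound on $\eta$, since then $a_j^k,d_j^k>0$ and $a_j^kd_j^k-b_j^kc_j^k>0$.

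\textbf{Condition~1.} Do not try to derive it. It is an extra hypothesis in a sufficient condition, and your argument never uses it. The paper itself, in the remark after Lemma~\ref{lem:lin-stab-self2}, attributes it to the extrinsic radial eigenvectors, which do not affect stability. An exact computation would not even reproduce it. Since $f$ maps into $(\mathbb{S}^{d-1})^n$, $Df(\bx^*)$ takes values in the tangent space, and at these equilibria it sends the radial vectors of the first class to $0$. The expression in condition~1 comes from the truncated expansion of the norm.

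\textbf{The gap: polygonal equilibria.} Your proposal leaves this case open, and the mechanism you sketch would not close it, for three reasons.

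First, take $\bw_i=\epsilon\Pi_{\bs_i}\bv_1=\epsilon(\bv_1-\zeta^i_1\bs_i)$. The $A$-part of $\sum_i\bw_i^T(\bw_i^+-\bw_i)$ is $\eta\sum_{i,j}A_{ij}(\bs)\,\bw_i^T\bV\bw_j$. The components $-\epsilon\zeta^i_1\zeta^i_\ell$ of $\bw_i$ along $\bv_\ell$, $\ell\geq 2$, are weighted by possibly negative $\lambda_\ell$. So positivity of $A(\bs)$ does not make this form positive.

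Second, even a positive one-step increment along a single vector does not show that the non-normal linear map $I+\eta K$ on the tangent space has an eigenvalue outside the unit disk. You need an actual unstable eigenvalue, or a Chetaev-type function whose increment is positive on a whole region.

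Third, $\sum_j\delta A_{ij}\bs_j$ is linear in $\bw$ and carries the same factor $\eta$ as the $A$-term. Taking $\eta$ small therefore cannot make it negligible.

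\textbf{The missing idea.} The paper, in Lemma~\ref{lem:polyg-self}, uses a Perron--Frobenius argument on the $\bv_1$-components. Projecting the linearization gives $\bm{\xi}_1^+=\bm{\xi}_1+\eta\lambda_1(I-Z_1^2)(\Psi\bm{\zeta}_1+A(\bs)\bm{\xi}_1)$. Because $|\zeta^i_1|<1$ and $A(\bs)$ is full, $(I-Z_1^2)A(\bs)$ is entrywise positive. Hence $\rho(I+\eta\lambda_1(I-Z_1^2)A(\bs))=1+\eta\lambda_1\rho((I-Z_1^2)A(\bs))>1$.

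The paper disposes of the attention-derivative term $\Psi\bm{\zeta}_1$ only by a boundedness remark, although that term is also linear in $\bw$. So your worry about it is well founded. As it stands, however, your proposal contains no step that actually produces an eigenvalue of modulus larger than one.
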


The proof, based on Lyapunov indirect method, is broken down into various lemmas. 
As in Section~\ref{sec:multi-oja}, let $ f_i(\bx)   $ be the right-hand side of \eqref{eq:update} and $ f(\bx)  $ its vectorization. Let also $ \chi(\bx) := \|  \bx_i  +  \eta \bV \sum_j A_{ij}(\bx) \bx_j \| $. The (extrinsic, see Remark~\ref{rem:two-classes}) Jacobian of $ f(\bx)  $  is as follows.

\begin{lemma}
\label{lem:Jacob-self}
The Jacobian of  \eqref{eq:update}, $ J(\bx) = \pde{f(\bx)}{\bx}= \begin{bmatrix}\pde{f_i(\bx)}{\bx_h} \end{bmatrix} $, has the following blocks:
 \beq
 \label{eq:jacon-atten1}
 \begin{split}
&  \pde{f_i(\bx)}{\bx_h} =   
 \frac{I \delta_{ih} + \eta \bV \left( \sum_j  \bx_j \pde{A_{ij} (\bx) }{\bx_h}   + A_{ih}(\bx) I\right) }{\chi(\bx) } \\
& \quad  -  \frac{  \left(  \bx_i  + \eta \bV \sum_j A_{ij}(\bx) \bx_j \right) \pde{}{\bx_h} \chi(\bx) } {\chi(\bx)^2 }
\end{split}
 \eeq
 where
 \beq
 \begin{split}
 &\pde{A_{ij} (\bx) }{\bx_h} 
  =  \Bigg(-  \bx_i^T \bQ^T \bK A_{ih}(\bx)  +  \bx_i^T \bQ^T \bK  \delta_{jh}  \\
  & + \left( \bx_j^T \bK^T \bQ  - \sum_\ell \bx_\ell^T \bK^T \bQ A_{i\ell} (\bx) \right) \delta_{ih} \Bigg) A_{ij} (\bx)
  \end{split}
 \label{eq:partial-A}
 \eeq
and
\beq
\label{eq:Jac-attent1}
\begin{split}
&\pde{\chi(\bx)}{\bx_h } =  \left( \sum_j\bx_j  \pde{A_{ij}(\bx)}{\bx_h} + A_{ih}(\bx) I\right)  \cdot \\
&  \cdot \left( \eta \bx_i^T \bV  + \eta^2 \sum_j \bx_j^T A_{ij}(\bx) \bV^T (I - \bx_i \bx_i^T)  \bV   \right)  
\\
& + \left( \left(  \eta - \eta^2    \bx_i^T  \bV \sum_j A_{ij}(\bx) \bx_j \right)  \sum_j A_{ij}(\bx) \bx_j^T \bV^T  \right) \delta_{ih}
\end{split}
\eeq
\end{lemma}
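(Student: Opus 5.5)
The computation is a direct application of the quotient and chain rules to $f_i(\bx) = \tilde{\bx}_i/\chi(\bx)$, where $\tilde{\bx}_i = \bx_i + \eta \bV \bm{m}_i(\bx)$ and $\bm{m}_i(\bx)=\sum_j A_{ij}(\bx)\bx_j$. It parallels the proof of Lemma~\ref{lem:Jac-moja}. We carry it out in three steps: first the derivative of the numerator, then the derivative of the softmax weights, and finally the derivative of the norm $\chi(\bx)$.

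\textbf{Numerator.} The derivative of the numerator is
\[
\pde{\tilde{\bx}_i}{\bx_h} = I\delta_{ih} + \eta \bV \pde{\bm{m}_i}{\bx_h}, \qquad \pde{\bm{m}_i}{\bx_h} = \sum_j \bx_j \pde{A_{ij}(\bx)}{\bx_h} + A_{ih}(\bx) I .
\]
The second identity is the product rule: the second term comes from differentiating the explicit $\bx_j$ with $j=h$. Inserting both into
\[
\pde{f_i}{\bx_h} = \frac{\pde{\tilde{\bx}_i}{\bx_h}}{\chi} - \frac{\tilde{\bx}_i \, \pde{\chi}{\bx_h}}{\chi^2}
\]
gives \eqref{eq:jacon-atten1} immediately.

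\textbf{Softmax weights.} Write $A_{ij} = e^{s_{ij}}/\sum_\ell e^{s_{i\ell}}$ with $s_{ij} = \bx_i^T \bQ^T \bK \bx_j$. The standard softmax identity gives
\[
\pde{A_{ij}}{\bx_h} = A_{ij}\Big(\pde{s_{ij}}{\bx_h} - \sum_\ell A_{i\ell}\pde{s_{i\ell}}{\bx_h}\Big),
\]
and the score derivative is
\[
\pde{s_{i\ell}}{\bx_h} = \bx_\ell^T \bK^T \bQ\,\delta_{ih} + \bx_i^T \bQ^T \bK\,\delta_{\ell h}.
\]
Substituting, the $\delta_{\ell h}$ term inside the sum collapses to $A_{ih}\bx_i^T\bQ^T\bK$, and the $\delta_{ih}$ terms give $\bx_j^T\bK^T\bQ - \sum_\ell A_{i\ell}\bx_\ell^T \bK^T\bQ$. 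This is exactly \eqref{eq:partial-A}; it is exact and needs no expansion in $\eta$.

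\textbf{Norm derivative.} We write
\[
\chi = \big(1 + 2\eta \bx_i^T \by_i + \eta^2\|\by_i\|^2\big)^{1/2}, \qquad \by_i = \bV\bm{m}_i,
\]
and, as in Lemma~\ref{lem:Jac-moja}, expand to second order in $\eta$:
\[
\chi \approx 1 + \eta \bx_i^T\by_i + \tfrac{\eta^2}{2}\big(\|\by_i\|^2 - (\bx_i^T\by_i)^2\big).
\]
We then differentiate through two channels. The first is the dependence via $\bm{m}_i$. Using $\pde{\by_i}{\bx_h} = \bV\pde{\bm{m}_i}{\bx_h}$, the gradient of $\eta \bx_i^T\by_i + \tfrac{\eta^2}{2}\by_i^T(I-\bx_i\bx_i^T)\by_i$ with respect to $\by_i$, as a row vector, is $\eta\bx_i^T + \eta^2 \by_i^T(I-\bx_i\bx_i^T)$. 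Composing it with $\bV\pde{\bm{m}_i}{\bx_h}$ yields the first line of \eqref{eq:Jac-attent1}, once the order of the factors is written consistently (row vector times matrix). The second channel is the explicit $\bx_i$, present only when $h=i$. Differentiating $\eta\bx_i^T\by_i - \tfrac{\eta^2}{2}(\bx_i^T\by_i)^2$ with respect to $\bx_i$ gives $(\eta - \eta^2\bx_i^T\by_i)\by_i^T$, which is the $\delta_{ih}$ term.

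The only delicate point is bookkeeping rather than mathematics. The terms must be kept in a consistent row/column layout, since the statement writes the product in \eqref{eq:Jac-attent1} loosely and it should be read as row vector times $\pde{\bm{m}_i}{\bx_h}$. The approximation order must also match Lemma~\ref{lem:Jac-moja}, i.e.\ terms of order higher than $\eta^2$ in $\chi$ are dropped. A sanity check closes the argument: setting $A = \frac1n\mathds{1}\mathds{1}^T$, so that $\pde{A_{ij}}{\bx_h}=0$, must recover \eqref{eq:Jacob-moja}.
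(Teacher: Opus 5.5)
Your proposal is correct and follows essentially the same route as the paper: the quotient rule on $f_i=\tilde{\bx}_i/\chi$, a direct differentiation of the softmax weights, and differentiation of the second-order-in-$\eta$ expansion of $\chi$. Your use of the standard softmax identity and your split into the $\by_i$ channel and the explicit $\bx_i$ channel are only a tidier bookkeeping of the paper's term-by-term computation. Your reading of the factor order in \eqref{eq:Jac-attent1} (row vector times $\sum_j \bx_j \pde{A_{ij}}{\bx_h} + A_{ih}I$) also matches how the paper's own proof writes the product before regrouping.
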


\proof
The partial derivatives of $ A_{ij}(\bx) $ are 
 \[
 \begin{split}
 \pde{A_{ij} (\bx) }{\bx_h} 
  = & \pde{}{\bx_h} \frac{e^{\langle \bQ \bx_i, \bK \bx_j \rangle } }{\sum_{\ell=1}^n e^{\langle \bQ \bx_i, \bK \bx_\ell \rangle } }\\
 =  & -  \bx_i^T \bQ^T \bK A_{ij} (\bx) A_{ih}(\bx) \\
 & +  \bx_i^T \bQ^T \bK A_{ij} (\bx)\delta_{jh} \\
&  + \bx_j^T \bK^T \bQ A_{ij} (\bx)\delta_{ih}  \\
& - \sum_\ell \bx_\ell^T \bK^T \bQ A_{i\ell} (\bx)A_{ij}(\bx) \delta_{ih} 
 \end{split}
 \]
 or \eqref{eq:partial-A} after regrouping.
Similarly to the proof of Lemma~\ref{lem:Jac-moja},  the series expansion of the normed term is
\[
\begin{split}
& \chi( \bx) =  \left(  (  \bx_i +  \eta \by_i )^T (  \bx_i + \eta \by_i) \right)^{\frac{1}{2}}  
\\
& \approx  1 + \frac{1}{2} \left( 2  \eta \bx_i ^T \by_i + \eta^2 \| \by_i \|^2  \right) - \frac{1}{2}  \eta^2 ( \bx_i^T \by_i ) ^2 + O(\eta^3) \\
& =  1 + \eta \bx_i^T \bV \sum_j A_{ij}(\bx) \bx_j + \frac{\eta^2}{2} \left( \sum_j A_{ij}(\bx) \bx_j^T \bV^T \bV \cdot \right. \\
& \left. \cdot \sum_j A_{ij}(\bx) \bx_j - ( \bx_i^T \bV \sum_j A_{ij}(\bx) \bx_j)^2 \right)  + O(\eta^3)
\end{split}
\]
Differentiating it
\[
\begin{split}
& \pde{\chi(\bx)}{\bx_h } =   \,  \eta \,  \bx_i^T \bV \left(  \sum_j \bx_j \pde{A_{ij}(\bx)}{\bx_h} + A_{ih}(\bx) I \right) \\
&   + \eta^2 \left(  \sum_j \bx_j^T A_{ij}(\bx) \bV^T \bV  \left(  \sum_j \bx_j\pde{A_{ij}(\bx)}{\bx_h}  + A_{ih}(\bx)I \right) 
 \right. \\
&  \left. -  \left( \bx_i^T \bV \sum_j A_{ij}(\bx) \bx_j \right) \bx_i^T \bV \right. \cdot \\
& \cdot \left.  \left(  \sum_j\bx_j \pde{A_{ij}(\bx)}{\bx_h}  + A_{ih}(\bx) I\right) \right) + \left( \eta  \sum_j \bx_j^T A_{ij}(\bx) \bV^T \right. \\
& \left. - \eta^2  \left( \bx_i^T \bV \sum_j A_{ij}(\bx) \bx_j \right)\sum_j \bx_j^T A_{ij}(\bx) \bV^T \right) \delta_{ih}
\end{split}
\]
or \eqref{eq:Jac-attent1} after regrouping.
One gets the result inserting these quantities into \eqref{eq:jacon-atten1}.
 \qed

The following lemma states that the behavior of the self-attention model \eqref{eq:update} at a consensus point is locally identical to that of the multiagent Oja model \eqref{eq:update-moja}.
\begin{lemma}
\label{lem:lin-stab-self1}
At a consensus point $ \bv_k$, the eigenvalues of the Jacobian matrix $ J(\bv_k) $ are those indicated in Lemma~\ref{lem:moja-F-eigenvalues}.
In particular, under Assumption~\ref{ass:V-sym}, and, if $ -\lambda_d > \lambda_1 $, for $ \eta<  \frac{2}{ |\lambda_1 + \lambda_d |} $, the consensus point $ \bx_i =\bv_1$ $ \forall\, i$ is locally asymptotically stable for \eqref{eq:update}, while the remaining consensus equilibria $ \bx_i= \bv_k$ $ \forall \, i $, with $ k= 2, \ldots, d $, are all unstable. 
\end{lemma}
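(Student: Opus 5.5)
The plan is to evaluate the Jacobian of Lemma~\ref{lem:Jacob-self} at $\bx_i=\bv_k$ for all $i$, and to show that it coincides with the matrix $J(\bv_k)$ of \eqref{eq:JacobF-moja}. Lemmas~\ref{lem:moja-F-eigenvalues} and~\ref{lem:moja-consensus-stab} then give the conclusion at once. By Lemma~\ref{lem:cons-Aij}, at consensus $A_{ij}(\bx)=\frac{1}{n}$. Hence $\sum_j A_{ij}\bx_j=\bv_k$, $\by_i=\lambda_k\bv_k$ and $\chi(\bx)=|1+\eta\lambda_k|$, exactly as in the uniform-attention case. Every term of \eqref{eq:jacon-atten1} and \eqref{eq:Jac-attent1} that does not involve $\pde{A_{ij}}{\bx_h}$ is therefore literally the corresponding term of \eqref{eq:Jacob-moja}. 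It remains to show that the extra contributions, all of which pass through $\sum_j \bx_j \pde{A_{ij}(\bx)}{\bx_h}$, vanish at consensus.

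The key computation uses \eqref{eq:partial-A}. Since all $\bx_j=\bv_k$, we have $\sum_j \bx_j \pde{A_{ij}}{\bx_h}=\bv_k\sum_j \pde{A_{ij}}{\bx_h}$. Because $A$ is row stochastic, $\sum_j A_{ij}\equiv 1$, so $\sum_j \pde{A_{ij}}{\bx_h}=0$ identically. One can also check this directly from \eqref{eq:partial-A} with $A_{ij}=\frac1n$. The first two terms give $\frac1n\bv_k^T\bQ^T\bK(-1+1)$. The last group gives $\frac1n\delta_{ih}\bigl(n\bv_k^T\bK^T\bQ-n\bv_k^T\bK^T\bQ\bigr)\cdot\frac1n=0$. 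So the whole rank-one correction $\bv_k\cdot 0^T$ disappears. The same cancellation removes the attention derivative from the factor $\sum_j\bx_j\pde{A_{ij}}{\bx_h}+A_{ih}I$ in \eqref{eq:Jac-attent1}, which reduces to $\frac1n I$. This is precisely the factor $\frac{\eta}{n}$ appearing in Lemma~\ref{lem:Jac-moja}. Substituting gives $J(\bv_k)$ equal to \eqref{eq:JacobF-moja} blockwise.

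Given this identity, Lemma~\ref{lem:moja-F-eigenvalues} yields the same three classes of eigenvalues and eigenvectors. Remark~\ref{rem:two-classes} applies verbatim, since the state manifold is still $(\mathbb{S}^{d-1})^n$, so only the second and third classes (the tangent ones) matter. The stability conclusions then follow word for word from the argument of Lemma~\ref{lem:moja-consensus-stab}. For $k=1$ all tangent eigenvalues lie inside the unit disk when $\eta<\frac{2}{|\lambda_1+\lambda_d|}$ and $-\lambda_d>\lambda_1$. For $k\ge2$ the eigenvalue $\frac{1+\eta\lambda_1}{|1+\eta\lambda_k|}>1$. Lyapunov's indirect method then gives local asymptotic stability and instability, respectively.

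The main obstacle is bookkeeping rather than ideas. The expression \eqref{eq:partial-A} is a row vector multiplying the column $\bx_j$, and one must check that every occurrence of $\pde{A}{\bx_h}$ in both \eqref{eq:jacon-atten1} and \eqref{eq:Jac-attent1} appears only through the contracted sum $\sum_j\bx_j\pde{A_{ij}}{\bx_h}$. Otherwise the row-stochasticity cancellation does not apply. I would verify this first, term by term, before invoking the earlier lemmas.
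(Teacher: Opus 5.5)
Your proposal is correct and follows the paper's route: at consensus the attention is uniform, $A=\frac1n\mathds{1}\mathds{1}^T$, so $J(\bv_k)$ reduces to \eqref{eq:JacobF-moja}, and Lemmas~\ref{lem:moja-F-eigenvalues} and~\ref{lem:moja-consensus-stab} then give the conclusion. Your explicit check that $\sum_j \bx_j \pde{A_{ij}}{\bx_h}=\bv_k\sum_j\pde{A_{ij}}{\bx_h}=0$, by row-stochasticity, fills in a step the paper leaves implicit, since uniformity of $A$ at the point alone does not remove the attention-derivative terms; it strengthens the argument without changing it.
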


\proof
From Lemma~\ref{lem:cons-Aij}, in a consensus equilibrium the attention matrix is the uniform matrix $ A(\bm{\bv_k}) =\frac{1}{n} \mathds{1}\mathds{1}^T$.
It follows that the Jacobian at $ \bx_i = \bv_k $, $ J(\bv_k) $, is still given by \eqref{eq:JacobF-moja}, and hence, from Lemma~\ref{lem:moja-consensus-stab}, that $ \bv_1 $ is locally asymptotically stable while $ \bv_k $, $k=2, \ldots, d $, are all unstable.
\qed

We consider now a bipartite consensus equilibrium and assume w.l.o.g. that the first $ n_1 $ agents are in $ \mathcal{V}_1 $ and the last $ n_2 $ in $ \mathcal{V}_2 $.

\begin{lemma}
\label{lem:Jac-bip-cons}
At a bipartite consensus point $ \pm  \bv_k $, the Jacobian $ J(\pm\bv_k) $ is given by 
\[
\begin{split}
J(\pm\bv_k) & = 
 \begin{bmatrix} 
J_o^{11} & \ldots &  J_o^{11} &  J_o^{12}   & \ldots &  J_o^{12} \\ 
\vdots & & \vdots & \vdots & & \vdots \\
J_o^{11} & \ldots &  J_o^{11} &  J_o^{12}   & \ldots &  J_o^{12} \\ 
J_o^{21} & \ldots &  J_o^{21} &  J_o^{22}   & \ldots &  J_o^{22} \\ 
\vdots & & \vdots & \vdots & & \vdots \\
J_o^{21} & \ldots &  J_o^{21} &  J_o^{22}   & \ldots &  J_o^{22} \\ 
 \end{bmatrix} \\
&  +
  \begin{bmatrix} J_d^1 & \\ &  \ddots \\ & &  J_d^1 \\
& & &  J_d^2 \\ & & & & \ddots \\ & & & & &  J_d^2   \end{bmatrix} 
,
 \end{split}
\]
where
\beq\label{eq:J_bip}
 \begin{split}
 J_o^{11}& =  \frac{
  \eta  \frac{\alpha_1^k}{\beta_1^k}}
{1+ \eta \lambda_k \gamma_1^k}  \Big(\sigma_1  \bV - \lambda_k \bv_k \bv_k^T  \\
& \quad -   ( 1-\sigma_1) (1-\gamma_1^k) \lambda_k \bv_k \bv_k^T \bQ^T \bK  \Big) \\
 J_o^{12} & =  \frac{
  \eta  \frac{\alpha_2^k}{\beta_1^k} }
{1+ \eta \lambda_k \gamma_1^k}  \Big( \sigma_1 \bV - \lambda_k \bv_k \bv_k^T   \\
& \quad + (1- \sigma_1) (1+\gamma_1^k) \lambda_k \bv_k \bv_k^T \bQ^T \bK  \Big) 
 \\
  J_o^{21} &  =  \frac{
  \eta  \frac{\alpha_2^k}{\beta_2^k} }
{1+ \eta \lambda_k \gamma_2^k}  \Big( \sigma_2 \bV - \lambda_k \bv_k \bv_k^T \\
& \quad   +  (1-\sigma_2) (1+\gamma_2^k) \lambda_k \bv_k \bv_k^T \bQ^T \bK  \Big) 
 \\
   J_o^{22} & =    \frac{
  \eta  \frac{\alpha_1^k}{\beta_2^k} }
{1+ \eta \lambda_k \gamma_2^k}  \Big( \sigma_2 \bV - \lambda_k \bv_k \bv_k^T   \\
& \quad  - (1-\sigma_2) (1-\gamma_2^k) \lambda_k \bv_k \bv_k^T \bQ^T \bK \Big) 
 \\
 J_d^1 &  = \frac{\sigma_1 I - ( 1- \sigma_1) \eta (1-(\gamma_1^k)^2) \lambda_k \bv_k \bv_k^T \bQ^T \bK}
 {1+ \eta \lambda_k \gamma_1^k} \\
 & \quad - \frac{  (1-\eta \lambda_k \gamma_1^k)  \eta  \lambda_k \gamma_1^k  \bv_k \bv_k^T }
 {1+ \eta \lambda_k \gamma_1^k} \\
  J_d^2 &  = \frac{\sigma_2 I -(1-\sigma_2)  \eta (1-(\gamma_2^k)^2) \lambda_k \bv_k \bv_k^T \bQ^T \bK }
{1+ \eta \lambda_k \gamma_2^k} \\
& \quad  -\frac{ (1-\eta \lambda_k \gamma_2^k) \eta  \lambda_k \gamma_2^k  \bv_k \bv_k^T }
{1+ \eta \lambda_k \gamma_2^k} \\
 \end{split}
 \eeq
 with $ \sigma_1 = \sign({1+ \eta \lambda_k \gamma_1^k}) $ and $ \sigma_2 = \sign({1+ \eta \lambda_k \gamma_2^k}) $.
 
\end{lemma}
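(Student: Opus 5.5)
Evaluate the general Jacobian of Lemma~\ref{lem:Jacob-self} at the bipartite point $\bx_i=\bv_k$ for $i\in\mathcal{V}_1$ and $\bx_i=-\bv_k$ for $i\in\mathcal{V}_2$. Throughout, use the attention matrix \eqref{eq:A-bipart} from Lemma~\ref{lem:cons-Aij}: every $A_{ij}$ depends only on the pair of groups containing $i$ and $j$. The first step is to compute the basic equilibrium quantities. For $i\in\mathcal{V}_1$,
\[
\bm{m}_i=\sum_j A_{ij}\bx_j=\frac{n_1\alpha_1^k-n_2\alpha_2^k}{\beta_1^k}\bv_k=\gamma_1^k\bv_k ,
\]
so $\by_i=\lambda_k\gamma_1^k\bv_k$ and $\tilde\bx_i=(1+\eta\lambda_k\gamma_1^k)\bv_k$. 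Hence $\chi=|1+\eta\lambda_k\gamma_1^k|$, and the ratio $\tilde\bx_i/\chi^2$ contributes $\sigma_1\bv_k/|1+\eta\lambda_k\gamma_1^k|$. For $i\in\mathcal{V}_2$ one obtains $\bm m_i=-\gamma_2^k\bv_k$ and, after the sign flip $\bx_i=-\bv_k$, the analogous quantities with $\gamma_2^k,\sigma_2$. This accounts for the common denominators $1+\eta\lambda_k\gamma_i^k$ and the signs $\sigma_i$.

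Next, evaluate \eqref{eq:partial-A} at the equilibrium and contract it with $\sum_j\bx_j(\cdot)$. For $h\neq i$ only the terms $-\bx_i^T\bQ^T\bK A_{ih}\sum_jA_{ij}\bx_j$ and $A_{ih}\bx_h\bx_i^T\bQ^T\bK$ survive. Each is a rank-one matrix $\bv_k\bv_k^T\bQ^T\bK$ with coefficient $A_{ih}(\pm1-\gamma_i^k)$, which is the origin of the factors $(1\mp\gamma_i^k)$ in $J_o^{ij}$. For $h=i$ the extra $\delta_{ih}$ term gives $\bm{m}_i\bm{m}_i^{\,T}$-type contributions, and since $\sum_jA_{ij}\bx_j\bx_j^T=\bv_k\bv_k^T$ these combine to $(1-(\gamma_1^k)^2)\bv_k\bv_k^T\bQ^T\bK$ (note $\bx_j\bx_j^T=\bv_k\bv_k^T$ for both groups). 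Then insert these into \eqref{eq:Jac-attent1}. The key simplification is $\bv_k^T(I-\bv_k\bv_k^T)=0$, which removes the $\eta^2$ cross terms of the off-diagonal blocks, as in \eqref{eq:JacobF-moja}. It also leaves $\bv_k^T\bV=\lambda_k\bv_k^T$, so the normalization correction reduces to multiples of $\bv_k\bv_k^T$ and $\bv_k\bv_k^T\bQ^T\bK$.

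Finally, assemble the pieces. For the block $(i,h)$ with $h\neq i$, the numerator term $\eta\bV A_{ih}$ and the $\bQ^T\bK$ terms, combined with the subtraction $\sigma_i\bv_k(\cdot)/|1+\eta\lambda_k\gamma_i^k|$ from the norm derivative, give $J_o^{ij}$. In particular the $\bQ^T\bK$ term in the numerator, $\eta\bV\bv_k\bv_k^T\bQ^T\bK=\eta\lambda_k\bv_k\bv_k^T\bQ^T\bK$, is partially cancelled by the projection term. This leaves the coefficient $(1-\sigma_i)$: the term vanishes when $\sigma_i=1$ and doubles when $\sigma_i=-1$. Because these blocks depend only on the groups of $i$ and $h$, the off-diagonal part has the stated repeated-block form. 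The $h=i$ block also contains the repeated term $J_o^{ii}$, which is why the diagonal correction is written separately. The remainder, $I/\chi$ minus the $\delta_{ih}$ part of the normalization derivative, yields $J_d^i$ with the term $(1-\eta\lambda_k\gamma_i^k)\eta\lambda_k\gamma_i^k\bv_k\bv_k^T$, exactly as in the Oja case with $\lambda_k$ replaced by $\lambda_k\gamma_i^k$.

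The main obstacle is bookkeeping rather than ideas. One must track signs for the $\mathcal{V}_2$ rows, where $\bx_i=-\bv_k$ flips $\bx_i^T\bQ^T\bK$ and the outer vector, and verify that the $\bv_k\bv_k^T\bQ^T\bK$ coefficients collapse to $(1-\sigma_i)(1\mp\gamma_i^k)$. I would check each group separately and use the Oja case ($\bQ^T\bK=0$, $\gamma_i^k=\nu$-type values) as a consistency test against \eqref{eq:JacobF-moja}.
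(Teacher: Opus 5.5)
Your proposal is correct and follows the paper's own route. You evaluate the Jacobian of Lemma~\ref{lem:Jacob-self} at $\pm\bv_k$ using \eqref{eq:A-bipart}, compute $\bm m_i$, $\chi$, $\sum_j \bx_j\,\partial A_{ij}/\partial\bx_h$ and $\partial\chi/\partial\bx_h$, remove the $\eta^2$ cross term via $\bv_k^T(I-\bv_k\bv_k^T)=0$, and assemble. Your closed form $-A_{ih}\bm m_i\bx_i^T\bQ^T\bK + A_{ih}\bx_h\bx_i^T\bQ^T\bK$ (plus the $\delta_{ih}$ term) just compresses the paper's eight-case enumeration.

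One slip, which the paper shares: by \eqref{eq:partial-A} the $\delta_{ih}$ contribution is actually $(1-(\gamma_i^k)^2)\bv_k\bv_k^T\bK^T\bQ$, i.e.\ it carries the transpose of $\bQ^T\bK$. This affects only the $(1-\sigma_i)$ term of $J_d^i$, so it is immaterial in the regime $\sigma_1=\sigma_2=1$ used afterwards.
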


\proof
Some tedious but elementary calculations yield:
 \[
 \left.  \bV \sum_j A_{ij}(\bx) \bx_j  \right|_{\bx_{i,j} = \pm \bv_k} = \begin{cases} 
 \gamma_1^k \lambda_k \bv_k &  \text{ if } \; \; i \in \mathcal{V}_1 \\
- \gamma_2^k \lambda_k \bv_k &  \text{ if } \; \; i \in \mathcal{V}_2 
\end{cases}
\]
\[
\begin{split}
 &\left. \bx_i^T  \bV \sum_j A_{ij}(\bx) \bx_j  \right|_{\bx_{i,j} = \pm \bv_k} = \\ 
& \qquad \qquad = \begin{cases} \bv_k^T \bV \gamma_1^k \bv_k = \lambda_k \gamma_1^k &\!\!\!\! \text{ if } \;  i \in \mathcal{V}_1 \\
  - \bv_k^T \bV (- \gamma_2^k) \bv_k =  \lambda_k \gamma_2^k &\!\!\!\!  \text{ if } \;  i \in \mathcal{V}_2 
  \end{cases}
  \end{split}
  \]
  Since $ \bv_k^T (I - \bv_k \bv_k^T) =0 $, it is
\[
\left. \sum_j \bx_j^T A_{ij}(\bx) \bV^T (I - \bx_i \bx_i^T)  \bV \right|_{\bx_{i,j} = \pm \bv_k} =0 .
\]
For the derivatives of the attention matrix, we have
\[
\begin{split}
& \pde{A_{ij}(\bx)}{\bx_h} =\\
& = \begin{cases}
 \bullet\;  \text{if } \; \; i  \in \mathcal{V}_1, \; h \in \mathcal{V}_1, \;  j \in \mathcal{V}_1 : \\
 \left( - \frac{\alpha_1^k}{\beta_1^k} + \delta_{jh}\right) \frac{\alpha_1^k}{\beta_1^k} \bv_k^T \bQ^T \bK 
+ \left(( 1 -\gamma_1^k) \frac{\alpha_1^k}{\beta_1^k}   \bv_k^T \bK^T \bQ \right) \delta_{ih} \\  
\bullet\;  \text{if } \; \; i \in \mathcal{V}_1 ,\;  h \in \mathcal{V}_1 , \; j\in \mathcal{V}_2 \\
- \frac{\alpha_1^k}{\beta_1^k}  \frac{\alpha_2^k}{\beta_1^k}  \bv_k^T \bQ^T \bK 
-( 1 +\gamma_1^k) \frac{\alpha_2^k}{\beta_1^k}  \left( \bv_k^T \bK^T \bQ \right) \delta_{ih} \\
\bullet\;  \text{if } \; \; i \in \mathcal{V}_1, \; h\in \mathcal{V}_2, \;  j \in \mathcal{V}_1 \\
- \frac{\alpha_2^k}{\beta_1^k} \frac{\alpha_1^k}{\beta_1^k} \bv_k^T \bQ^T \bK \\
\bullet\;  \text{if }  \; \; i \in \mathcal{V}_1, \; h\in \mathcal{V}_2 , \;  j \in \mathcal{V}_2  \\
\left( - \frac{\alpha_2^k}{\beta_1^k} + \delta_{jh}\right) \frac{\alpha_2^k}{\beta_1^k} \bv_k^T \bQ^T \bK \\
\bullet\;  \text{if }  \; \; i \in \mathcal{V}_2, \; h \in \mathcal{V}_1, \; j \in \mathcal{V}_1  \\
\left(  \frac{\alpha_2^k}{\beta_2^k} - \delta_{jh}\right) \frac{\alpha_2^k}{\beta_2^k} \bv_k^T \bQ^T \bK \\
\bullet\;  \text{if }  \; \; i \in \mathcal{V}_2, \; h \in \mathcal{V}_1 , \;  j \in \mathcal{V}_2  \\
  \frac{\alpha_2^k}{\beta_2^k}  \frac{\alpha_1^k}{\beta_2^k} \bv_k^T \bQ^T \bK \\
\bullet\;  \text{if } \; \; i  \in \mathcal{V}_2, \; h \in \mathcal{V}_2, \;  j \in \mathcal{V}_1  \\
 \frac{\alpha_1^k}{\beta_2^k}  \frac{\alpha_2^k}{\beta_2^k} \bv_k^T \bQ^T \bK 
+ \left(( 1 +\gamma_2^k) \frac{\alpha_2^k}{\beta_2^k}   \bv_k^T \bK^T \bQ \right) \delta_{ih} \\
\bullet\; \text{if } \; \; i  \in \mathcal{V}_2, \; h \in \mathcal{V}_2, \;  j \in \mathcal{V}_2 \\
\left(  \frac{\alpha_1^k}{\beta_2^k} - \delta_{jh} \right) \frac{\alpha_1^k}{\beta_2^k} \bv_k^T \bQ^T \bK  - \left(( 1 -\gamma_2^k) \frac{\alpha_1^k}{\beta_2^k}   \bv_k^T \bK^T \bQ \right) \delta_{ih} \\
\end{cases}
\end{split}
\]
and hence
\[
\begin{split}
&\sum_j \bx_j  \pde{A_{ij}(\bx)}{\bx_h} = \\
& = \begin{cases}
\bullet\;  \text{if } \; \; i  \in \mathcal{V}_1, \; h \in \mathcal{V}_1\\
\left(  \frac{\alpha_1^k}{\beta_1^k} (1 - \gamma_1^k ) + (1-(\gamma_1^k)^2 ) \delta_{ih} \right) \bv_k  \bv_k^T \bQ^T \bK  \\
\bullet\; \text{if } \; \; i  \in \mathcal{V}_1, \; h \in \mathcal{V}_2\\ 
-  \frac{\alpha_2^k}{\beta_1^k} (1 + \gamma_1^k ) \bv_k \bv_k^T \bQ^T \bK   \\
\bullet\;  \text{if } \; \; i  \in \mathcal{V}_2, \; h \in \mathcal{V}_1\\
-\frac{\alpha_2^k}{\beta_2^k} (1 + \gamma_2^k ) \bv_k  \bv_k^T \bQ^T \bK  \\
\bullet\;  \text{if } \; \; i  \in \mathcal{V}_2, \; h \in \mathcal{V}_2\\
\left(  \frac{\alpha_1^k}{\beta_2^k} (1 - \gamma_2^k ) + (1-(\gamma_2^k)^2 )  \delta_{ih} \right)  \bv_k \bv_k^T \bQ^T \bK   \\
\end{cases}
\end{split}
\]
Concerning the normed terms, 
\[
\begin{split}
 \left. \chi(\bx) \right|_{\bx_{i,j} = \pm \bv_k} = \begin{cases}
| 1+ \eta \lambda_k \gamma_1^k | & \text{if } \; \; i  \in \mathcal{V}_1 \\
| 1+ \eta \lambda_k \gamma_2^k | & \text{if } \; \; i  \in \mathcal{V}_2
\end{cases}
\end{split}
\]
and
\[
\label{eq:Jac-attent2}
\begin{split}
&\left. \pde{\chi(\bx)}{\bx_h }  \right|_{\bx_{i,h,j} = \pm \bv_k} =  \\
& = \begin{cases} 
\bullet\;   \text{if } \; \; i  \in \mathcal{V}_1, \; h \in \mathcal{V}_1\\
\eta \lambda_k \bv_k^T   \left( \frac{\alpha_1^k}{\beta_1^k} \left(  1+ (1 - \gamma_1^k ) \bQ^T \bK \right)  \right.   \\  \quad   + \left( (1-(\gamma_1^k)^2 ) \bQ^T \bK  +  (1-\eta \lambda_k \gamma_1^k ) \gamma_1^k  \right) \delta_{ih}  \Big) \\
\bullet\;  \text{if } \; \; i  \in \mathcal{V}_1, \; h \in \mathcal{V}_2\\
\eta \lambda_k \bv_k^T  \frac{\alpha_2^k}{\beta_1^k} \left( 1 - (1 + \gamma_1^k ) \bQ^T \bK  \right)    \\
\bullet\;  \text{if } \; \; i  \in \mathcal{V}_2, \; h \in \mathcal{V}_1\\
- \eta \lambda_k  \bv_k^T  \frac{\alpha_2^k}{\beta_2^k} \left( 1 - (1 + \gamma_2^k ) \bQ^T \bK  \right)    \\
\bullet\;  \text{if } \; \; i  \in \mathcal{V}_2, \; h \in \mathcal{V}_2\\
- \eta \lambda_k  \bv_k^T \left( \frac{\alpha_1^k}{\beta_2^k}  \left( 1 + (1 - \gamma_2^k )  \bQ^T \bK  \right)  \right. \\
\quad  + \left( (1-(\gamma_2^k)^2 ) \bQ^T \bK  + (1-\eta \lambda_k \gamma_2^k ) \gamma_2^k \right) \delta_{ih}  \Big) \\
\end{cases}
\end{split}
\]
Combining these terms together, we obtain the two components of $\pde{f_i(\bx)}{\bx_h} $
\[
\begin{split}
& \left.  \frac{I \delta_{ih} + \eta \bV \left( \sum_j  \bx_j  \pde{A_{ij} (\bx) }{\bx_h}  + A_{ih}(\bx) I \right) }{\chi(\bx) }\right|_{\bx_{i,j} = \pm \bv_k} = \\
&  =
 \begin{cases}
 \bullet\;    \text{if } \; \; i  \in \mathcal{V}_1, \; h \in \mathcal{V}_1\\
 \frac{\eta \frac{\alpha_1^k}{\beta_1^k} \left( \bV +  (1-\gamma_1^k)\lambda_k \bv_k \bv_k^T \bQ^T \bK \right) }
 {| 1+ \eta \lambda_k \gamma_1^k | } \\ 
 \qquad + \frac{\left( I + \eta  (1-(\gamma_1^k)^2) \lambda_k \bv_k \bv_k^T \bQ^T \bK  \right)\delta_{ih} }
 {| 1+ \eta \lambda_k \gamma_1^k | } \\
 \bullet\;    \text{if } \; \; i  \in \mathcal{V}_1, \; h \in \mathcal{V}_2\\
 \frac{\eta  \frac{\alpha_2^k}{\beta_1^k} \left(\bV - (1+\gamma_1^k) \lambda_k \bv_k \bv_k^T \bQ^T \bK  \right) }
 {| 1+ \eta \lambda_k \gamma_1^k | } \\
 \bullet\;  \text{if } \; \; i  \in \mathcal{V}_2, \; h \in \mathcal{V}_1\\
  \frac{\eta  \frac{\alpha_2^k}{\beta_2^k} \left( \bV - (1+\gamma_2^k) \lambda_k \bv_k \bv_k^T \bQ^T \bK  \right) }
 {| 1+ \eta \lambda_k \gamma_2^k | } \\
 \bullet\;    \text{if } \; \; i  \in \mathcal{V}_2, \; h \in \mathcal{V}_2\\
 \frac{\eta \frac{\alpha_1^k}{\beta_2^k} \left( \bV + (1-\gamma_2^k)\lambda_k \bv_k  \bv_k^T \bQ^T \bK \right) }
 {| 1+ \eta \lambda_k \gamma_2^k | } \\ 
 \qquad + \frac{\left( I + \eta  (1-(\gamma_2^k)^2) \lambda_k \bv_k \bv_k^T \bQ^T \bK \right)\delta_{ih}}
 {| 1+ \eta \lambda_k \gamma_2^k | } \\
 \end{cases}
 \end{split}
 \]
 \[
\begin{split}
 &\left.  \frac{  \left(  \bx_i  + \eta \bV \sum_j A_{ij}(\bx) \bx_j \right) \pde{}{\bx_h} \chi(\bx) } {\chi(\bx)^2 }\right|_{\bx_{i,j} = \pm \bv_k} \!\!\!\!\!\! \!\!\!\!\!\! = \\
 & \;\; 
 = \begin{cases}
 \bullet\;   \text{if } \; \; i  \in \mathcal{V}_1, \; h \in \mathcal{V}_1 \\
 \frac{\eta \lambda_k \bv_k \bv_k^T \Big( 
 \frac{\alpha_1^k}{\beta_1^k} \left( 1+ (1-\gamma_1^k)  \bQ^T \bK \right) }
{1+ \eta \lambda_k \gamma_1^k}  \\
 \qquad +  \frac{\left[ (1-(\gamma_1^k)^2) \bQ^T \bK  +(1-\eta \lambda_k \gamma_1^k) \gamma_1^k  \right] \delta_{ih} \Big) }
{1+ \eta \lambda_k \gamma_1^k} \\
 \bullet\;  \text{if } \; \; i  \in \mathcal{V}_1, \; h \in \mathcal{V}_2 \\
 \frac{\eta \lambda_k \bv_k \bv_k^T \frac{\alpha_2^k}{\beta_1^k} \left( 1- (1+\gamma_1^k)  \bQ^T \bK \right) }
{1+ \eta \lambda_k \gamma_1^k} \\
 \bullet\;   \text{if } \; \; i  \in \mathcal{V}_2, \; h \in \mathcal{V}_1 \\
 \frac{\eta \lambda_k \bv_k \bv_k^T \frac{\alpha_2^k}{\beta_2^k} \left( 1- (1+\gamma_2^k) \bQ^T \bK  \right) }
{1+ \eta \lambda_k \gamma_2^k} \\
 \bullet\;   \text{if } \; \; i  \in \mathcal{V}_2, \; h \in \mathcal{V}_2 \\
\frac{\eta \lambda_k \bv_k \bv_k^T \Big( \frac{\alpha_1^k}{\beta_2^k} \left( 1+ (1-\gamma_2^k) \bQ^T \bK \right) }
{1+ \eta \lambda_k \gamma_2^k} \\
 \qquad + \frac{\left[  (1-(\gamma_2^k)^2) \bQ^T \bK  + (1-\eta \lambda_k \gamma_2^k) \gamma_2^k  \right] \delta_{ih} \Big) }
{1+ \eta \lambda_k \gamma_2^k} \\
\end{cases}
\end{split}
\]
which lead to the expression for $ J(\pm \bv_k)$. 
\qed

To avoid dealing with absolute values in $ 1 + \eta \lambda_k \gamma_i  $, $ i=1,2$, in $ J(\pm \bv_k)$, it is convenient to restrict the step size $ \eta $ opportunely.
\begin{lemma}
\label{lemm:simplif-eta}
When $ \eta< \min\left( \frac{1}{\lambda_1 } , \, \frac{1}{ | \lambda_d |} \right) $ then $ 1+ \eta \lambda_k \gamma_i^k > 0$, $ i=1,2$. 
\end{lemma}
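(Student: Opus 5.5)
The plan is to show that $|\gamma_i^k|\le 1$ for $i=1,2$, and then use the step-size bound to conclude that $|\eta\lambda_k\gamma_i^k|<1$. This immediately gives $1+\eta\lambda_k\gamma_i^k>0$.

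\textbf{Step 1: bound on $\gamma_i^k$.} By definition, $\gamma_1^k=\frac{n_1\alpha_1^k-n_2\alpha_2^k}{n_1\alpha_1^k+n_2\alpha_2^k}$. Since $\alpha_1^k=e^{\bv_k^T\bQ^T\bK\bv_k}>0$ and $\alpha_2^k=e^{-\bv_k^T\bQ^T\bK\bv_k}>0$, and $n_1,n_2\ge 0$ with $n_1+n_2=n\ge1$, the numerator is a difference of two nonnegative numbers whose sum is the (strictly positive) denominator. Writing $a=n_1\alpha_1^k\ge0$ and $b=n_2\alpha_2^k\ge0$, we get $|\gamma_1^k|=\frac{|a-b|}{a+b}\le 1$. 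The same argument, with the roles of $\alpha_1^k,\alpha_2^k$ exchanged in the weights, gives $|\gamma_2^k|\le 1$.

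\textbf{Step 2: bound on $\eta|\lambda_k|$.} The eigenvalues are ordered as $\lambda_d\le\lambda_k\le\lambda_1$, which gives $|\lambda_k|\le\max(\lambda_1,|\lambda_d|)$. The hypothesis $\eta<\min\left(\frac{1}{\lambda_1},\frac{1}{|\lambda_d|}\right)$ is equivalent to $\eta\max(\lambda_1,|\lambda_d|)<1$. Hence $\eta|\lambda_k|<1$ for every $k$. (Assumption~\ref{ass:V-sym} guarantees that $\lambda_1>0$ and $\lambda_d\neq0$, so both reciprocals are well defined.)

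\textbf{Step 3: conclusion.} Combining the two steps, $|\eta\lambda_k\gamma_i^k|\le\eta|\lambda_k|<1$, and therefore $1+\eta\lambda_k\gamma_i^k>0$ for $i=1,2$.

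There is no real obstacle here. The only point requiring care is Step 1, where one must check that the denominators $\beta_i^k$ are strictly positive, so that the inequality is not degenerate even when $n_2=0$. As a consequence, $\sigma_1=\sigma_2=1$ in Lemma~\ref{lem:Jac-bip-cons}.
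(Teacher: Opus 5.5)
Your proof is correct and follows essentially the same route as the paper: bound $|\gamma_i^k|\le 1$, use $\lambda_d\le\lambda_k\le\lambda_1$ to get $\eta|\lambda_k|<1$, and conclude $|\eta\lambda_k\gamma_i^k|<1$. Your version is slightly tighter than the paper's. You actually justify $|\gamma_i^k|\le 1$, which the paper only asserts. You also keep the inequalities strict throughout, whereas the paper's chain passes through $\eta|\lambda_k\gamma_i^k|\le 1$ and ends with a non-strict $1+\eta\lambda_k\gamma_i^k\ge 0$.
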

\proof
Notice first that, since $ |\gamma_i^k |\leq 1 $, $  | \lambda_k| \geq | \lambda_k \gamma_i^k | $, hence $\min\left( \frac{1}{\lambda_1 } , \, \frac{1}{ | \lambda_d |} \right)  \leq \frac{1}{ | \lambda_k| } \leq \frac{1}{ | \lambda_k \gamma_i^k |}$. 
Consequently, $ \eta< \min\left( \frac{1}{\lambda_1 } , \, \frac{1}{ | \lambda_d |} \right) $ implies $ \eta |  \lambda_k \gamma_i^k | \leq 1 $, or $ 1- \eta |  \lambda_k \gamma_i^k | >0$, which in turn implies $ 1 + \eta \lambda_k \gamma_i^k \geq 0 $, $ i=1,2$. 
\qed

This limitation on $ \eta$ implies $ \sigma_1 =\sigma_2 =1 $, which leads to a considerable simplification in the Jacobian terms of \eqref{eq:J_bip}, and therefore in the calculation of its eigenvalues.

\begin{lemma}
\label{lem:lin-stab-self2}
When $ \eta< \min\left( \frac{1}{\lambda_1 } , \, \frac{1}{ | \lambda_d |} \right) $, at a bipartite consensus point $ \pm  \bv_k $, the eigenvalues of the Jacobian $ J(\pm\bv_k) $ are 
 \benu
 \item 
  $ \frac{ 1 - (1-\eta \lambda_k \gamma_1^k)  \eta  \lambda_k\gamma_1^k  }
 {1+ \eta \lambda_k \gamma_1^k}\  $ of multiplicity $ n_1$, and  $ \frac{ 1 - (1-\eta \lambda_k \gamma_2^k)  \eta  \lambda_k\gamma_2^k  }
 {1+ \eta \lambda_k \gamma_2^k}\  $  of multiplicity $ n_2$.
 \item $ \mu_{j,\pm}^k =  \frac{1}{2} \Big( a_j^k+ d_j^k \pm \sqrt{ (a_j^k-d_j^k)^2 + 4 c_j^kb_j^k } \Big) $ for $ j=1, \ldots, k-1, k+1,\ldots, d $, where $ a_j^k$, $ b_j^k$, $ c_j^k $ and $ d_j^k$ are given by \eqref{eq:a-d}.

 \item   $  \frac{1}{1+ \eta \lambda_k \gamma_1^k} $ and $  \frac{1 }{1+ \eta \lambda_k \gamma_2^k} $, of total multiplicity $ nd -n -2d +2 $.
 \eenu

\end{lemma}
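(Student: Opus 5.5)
Throughout, we set $\sigma_1=\sigma_2=1$, which Lemma~\ref{lemm:simplif-eta} allows under the assumed bound on $\eta$. We then work directly with the block structure of $J(\pm\bv_k)$ from Lemma~\ref{lem:Jac-bip-cons}, in the same way as in the proof of Lemma~\ref{lem:moja-F-eigenvalues}. The idea is to produce three families of explicit eigenvectors whose total count is $nd$.

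The key structural fact is the form of the blocks once $\sigma_i=1$. Every correction term $\bv_k\bv_k^T\bQ^T\bK$ in \eqref{eq:J_bip} now carries a factor $(1-\sigma_i)$ and vanishes. Hence $J_o^{11},J_o^{12},J_o^{21},J_o^{22}$ are all scalar multiples of $\bV-\lambda_k\bv_k\bv_k^T$. Likewise $J_d^i=\frac{1}{1+\eta\lambda_k\gamma_i^k}\bigl(I-(1-\eta\lambda_k\gamma_i^k)\eta\lambda_k\gamma_i^k\bv_k\bv_k^T\bigr)$. All blocks are therefore diagonal in the eigenbasis $\bv_1,\ldots,\bv_d$, and the problem decouples direction by direction.

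\textbf{First class.} Take $\bp$ with $\bv_k$ in a single slot $i$ and zeros elsewhere. Every off-diagonal-type block annihilates $\bv_k$, since $(\bV-\lambda_k\bv_k\bv_k^T)\bv_k=0$. So only $J_d^i$ acts, and it gives the eigenvalue $\frac{1-(1-\eta\lambda_k\gamma_i^k)\eta\lambda_k\gamma_i^k}{1+\eta\lambda_k\gamma_i^k}$. This eigenvalue has multiplicity $n_1$ for slots in $\mathcal{V}_1$ and $n_2$ for slots in $\mathcal{V}_2$.

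\textbf{Second class.} Fix $j\neq k$ and use the ansatz $\bq=[\,\xi\bv_j\,\ldots\,\xi\bv_j\;\zeta\bv_j\,\ldots\,\zeta\bv_j\,]$, with $n_1$ copies of $\xi\bv_j$ and $n_2$ copies of $\zeta\bv_j$. Acting with $J$ uses $(\bV-\lambda_k\bv_k\bv_k^T)\bv_j=\lambda_j\bv_j$, and summing over the $n_1$ or $n_2$ identical columns gives the $2\times 2$ reduced system $\begin{bmatrix} a_j^k & b_j^k\\ c_j^k & d_j^k\end{bmatrix}$ acting on $(\xi,\zeta)$. One entry deserves a check: $a_j^k$ combines $n_1\eta\frac{\alpha_1^k}{\beta_1^k}\lambda_j$ from $J_o^{11}$ with the $1$ from $J_d^1$, both over $1+\eta\lambda_k\gamma_1^k$, and the other three entries arise the same way. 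The eigenvalues of this matrix are $\mu^k_{j,\pm}$. This step yields $2(d-1)$ eigenvalues.

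\textbf{Third class.} Take vectors $\br=[\bz^1;\ldots;\bz^n]$ with $\bv_k^T\bz^i=0$ for all $i$, $\sum_{i\in\mathcal{V}_1}\bz^i=0$ and $\sum_{i\in\mathcal{V}_2}\bz^i=0$. The repeated off-diagonal blocks multiply group sums, so they vanish on $\br$. The $\bv_k\bv_k^T$ part of $J_d^i$ also kills $\bz^i$. Hence $J\br$ has $i$-th block $\frac{1}{1+\eta\lambda_k\gamma_i^k}\bz^i$. The dimension count is $n(d-1)-2(d-1)=nd-n-2d+2$. Combining the three classes, the count is $n+2(d-1)+nd-n-2d+2=nd$, so the spectrum is complete.

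The main obstacle is completeness and independence. We must confirm that the three families span $\mathbb{R}^{nd}$. This follows from the decomposition of $\mathbb{R}^{nd}$ into the $\bv_k$-components, the group-constant $\bv_j$-components and the group-zero-sum components, which is a direct sum and is $J$-invariant. If the $2\times2$ matrix is not diagonalizable (repeated $\mu$), the statement still holds at the level of eigenvalues. A second point to verify is the bookkeeping of the $\alpha/\beta$ coefficients in the reduced $2\times 2$ system.
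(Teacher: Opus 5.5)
Your proposal is correct and follows essentially the same route as the paper. You set $\sigma_1=\sigma_2=1$ via Lemma~\ref{lemm:simplif-eta} and exhibit the same three eigenvector families: $\bv_k$ in a single slot, group-constant vectors along $\bv_j$ that reduce to the $2\times 2$ matrix with entries $a_j^k,b_j^k,c_j^k,d_j^k$, and group-zero-sum vectors orthogonal to $\bv_k$. Your $J$-invariant direct-sum decomposition makes the completeness and multiplicity count more explicit than the paper does. One minor point: in the third class the scaling factor is $1/(1+\eta\lambda_k\gamma_g^k)$, where $g$ is the group of token $i$, so the genuine eigenvectors are those supported in a single group, as the paper's $\br_1,\br_2$ are.
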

\proof
For $ J(\pm \bv_k) $ there are 3 classes of eigenvectors:
 \benu
 \item 
 If $ 1+ \eta \lambda_k \gamma_i^k >0$, $i=1, 2$, then choose $ \bm{p}^\ell = [ \,\underbrace{0 \, \ldots \, 0}_{\ell-1} \, \bv_k^T \, \underbrace{0 \, \ldots \, 0}_{n- \ell} \,]^T$. There are $ n$ such eigenvectors, and they are obviously all orthogonal to each other. For $\ell \leq n_1 $ it is 
  \[
 \begin{cases}
 J_o^{11} \bv_k & =  \frac{
  \eta  \frac{\alpha_1^k}{\beta_1^k} \left( \bV - \lambda_k \bv_k \bv_k^T \right)  
 }
{1+ \eta \lambda_k \gamma_1^k} \bv_k 
=  \frac{ \eta  \frac{\alpha_1^k}{\beta_1^k} \left( \lambda_k \bv_k  - \lambda_k \bv_k \right)}
{1+ \eta \lambda_k \gamma_1^k} =0
\\
   J_o^{21}  \bv_k &  =  \frac{
  \eta  \frac{\alpha_2^k}{\beta_2^k}  \left( \bV - \lambda_k \bv_k \bv_k^T \right)  
}
{1+ \eta \lambda_k \gamma_2^k} \bv_k = \frac{
  \eta  \frac{\alpha_2^k}{\beta_2^k}   \left( \lambda_k \bv_k  - \lambda_k \bv_k \right)  
}
{1+ \eta \lambda_k \gamma_2^k} =0
\\
 J_d^1 \bv_k &  = \frac{ I -(1-\eta \lambda_k \gamma_1^k)  \eta  \lambda_k \gamma_1^k \bv_k \bv_k^T }
 {1+ \eta \lambda_k \gamma_1^k} \bv_k \\
&  = \frac{ 1 -  (1-\eta \lambda_k \gamma_1^k) \eta  \lambda_k \gamma_1^k }
 {1+ \eta \lambda_k \gamma_1^k}\bv_k 
 \\
   \end{cases}
\]
meaning that 
\[
J(\pm \bv_k ) \bp =\frac{ 1 -(1-\eta \lambda_k \gamma_1^k) \eta \lambda_k  \gamma_1^k  }
 {1+ \eta \lambda_k \gamma_1^k}\ \bp
\]
i.e., $ \frac{ 1 - (1-\eta \lambda_k \gamma_1^k)  \eta  \lambda_k\gamma_1^k  }
 {1+ \eta \lambda_k \gamma_1^k}\  $ is an eigenvalue of $ J(\pm \bv_k)$ of multiplicity $ n_1$.
For $ \ell> n_1 $, similar calculations give
\[
 J_o^{12} \bv_k = J_o^{22} \bv_k =0, \;\; J_d^2 \bv_k = \frac{ 1 -  (1-\eta \lambda_k \gamma_2^k) \eta  \lambda_k \gamma_2^k }
 {1+ \eta \lambda_k \gamma_2^k}\bv_k 
 \]
 i.e., $ \frac{ 1 - (1-\eta \lambda_k \gamma_2^k)  \eta  \lambda_k\gamma_2^k  }
 {1+ \eta \lambda_k \gamma_2^k}\  $ is an eigenvalue of $ J(\pm \bv_k)$ of multiplicity $ n_2$.

\item Second class: Consider $ d-1$  vectors  $ \bm{q}^h $ s.t. $ \bm{q}^h =[\, \underbrace{ (\bm{w}_1^h)^T \, \ldots  \,  (\bm{w}_1^h)^T}_{n_1 \; \text{times}} \; \underbrace{(\bm{w}_2^h)^T \, \ldots  \, (\bm{w}_2^h)^T}_{n_2 \; \text{times}} ]^T $, with 
$ \bm{w}_1^h = \sum_{j=1}^d \xi_j^{h,1} \bv_j $ and $ \bm{w}_2^h = \sum_{j=1}^d \xi_j^{h,2} \bv_j $.
Choose $ \bw_i^h $ s.t. $ \xi_k^{h,i}=0$, so that $ \bv_k^T \bw_i^h =0$.
We have 
\[
\begin{split} 
J_o^{11} \bm{w}_1^h & = \frac{\eta  \frac{\alpha_1^k}{\beta_1^k}   \sum_{j\neq k } \lambda_j \xi_j^{h,1} \bv_j }{1+ \eta \lambda_k \gamma_1^k} \\
J_o^{12} \bm{w}_2^h & = \frac{\eta  \frac{\alpha_2^k}{\beta_1^k}  \sum_{j\neq k } \lambda_j \xi_j^{h,2} \bv_j }{1+ \eta \lambda_k \gamma_1^k} \\
J_o^{21} \bm{w}_1^h & = \frac{\eta  \frac{\alpha_2^k}{\beta_2^k}  \sum_{j\neq k } \lambda_j \xi_j^{h,1} \bv_j }{1+ \eta \lambda_k \gamma_2^k} \\
J_o^{22} \bm{w}_2^h & = \frac{\eta  \frac{\alpha_1^k}{\beta_2^k}  \sum_{j\neq k } \lambda_j \xi_j^{h,2} \bv_j }{1+ \eta \lambda_k \gamma_2^k} \\
\end{split}
\]
and
\[
\begin{split} 
J_d^{1} \bm{w}_1^h & = \frac{ \bw_1^h }{1+ \eta \lambda_k \gamma_1^k} \\
J_d^{2} \bm{w}_2^h & = \frac{ \bw_2^h  }{1+ \eta \lambda_k \gamma_2^k}
\end{split}
\]
The $ \bm{q}^h $ are obtained solving the algebraic equation  $ J(\pm \bv_k) \bm{q}^h = \mu \bm{q}^h $ where also the eigenvalue $ \mu $ is an unknown.
Expanding we obtain a block of $ n_1 $ identical equations
\[
\begin{split}
 \frac{  \sum_{j\neq k }  \left(  \eta n_ 1 \frac{\alpha_1^k}{\beta_1^k} \xi_j^{h,1}  +  \eta  n_2 \frac{\alpha_2^k}{\beta_1^k}   \xi_j^{h,2} \right)  \lambda_j  \bv_j +  \sum_{j\neq k }  \xi_j^{h,1}   \bv_j }{1+ \eta \lambda_k \gamma_1^k} \\
  = \mu  \sum_{j\neq k }  \xi_j^{h,1}   \bv_j 
 \end{split}
 \]
 and another of $ n_2 $ identical equations
 \[
 \begin{split}
 \frac{  \sum_{j\neq k }  \left( \eta n_ 1 \frac{\alpha_2^k}{\beta_2^k}  \xi_j^{h,1}  +  \eta  n_2 \frac{\alpha_1^k}{\beta_2^k}  \xi_j^{h,2} \right)  \lambda_j  \bv_j +  \sum_{j\neq k }  \xi_j^{h,2}   \bv_j }{1+ \eta \lambda_k \gamma_2^k} \\
  = \mu  \sum_{j\neq k }  \xi_j^{h,2}   \bv_j  \end{split}
 \]
 whose solution provides both the desired eigenvector $ \bm{w}_i^h $ and the associated eigenvalues $ \mu$. 
Projecting these equations along the eigenvector $ \bv_j $ and rearranging
\[
\begin{split}
\Big(  \overbrace{ 
 \frac{ 1+ \eta n_ 1 \frac{\alpha_1^k}{\beta_1^k}  \lambda_j  }{1+ \eta \lambda_k \gamma_1^k} }^{=a_j^k} - \mu \Big)  \xi_j^{h,1} + 
\Big(  \overbrace{ 
 \frac{ \eta n_2 \frac{\alpha_2^k}{\beta_1^k}  \lambda_j  }{1+ \eta \lambda_k \gamma_1^k} }^{=b_j^k} \Big)  \xi_j^{h,2} =0 \\
\Big( \underbrace{ \frac{ \eta n_1 \frac{\alpha_2^k}{\beta_2^k}   \lambda_j  }{1+ \eta \lambda_k \gamma_2^k}  }_{=c_j^k} \Big) \xi^{h,1}_j  
+ \Big( \underbrace{ \frac{  1+  \eta n_ 2 \frac{\alpha_1^k}{\beta_2^k}   \lambda_j  }{1+ \eta \lambda_k \gamma_2^k  } }_{=d_j^k} - \mu  \Big)  \xi^{h,2}_j  & = 0.
\end{split}
\]
Notice that $ a_j^k$, $b_j^k $, $ c_j^k$ and $ d_j^k $ are independent of the index $h$, hence, to avoid repeated identical algebraic equations, we can take $ h=j$ and drop one index in the $ \xi_j^{h,i}$ variables: $ \xi_j^{h,i} = \xi_j^i $, obtaining
\[
\begin{split}
(a_j^k-\mu) \xi^1_j  +b_j^k  \xi^2_j & =0 \\
c_j^k \xi^1_j  + (d_j^k-\mu)   \xi^2_j  & =0 ,
\end{split}
\]
which leads to the formula for the eigenvalues $ \mu_{j, \pm}^k = \frac{1}{2} \Big( a_j^k+ d_j^k \pm \sqrt{ (a_j^k-d_j^k)^2 + 4 c_j^kb_j^k } \Big) $. 
The relationship between the components of the two vectors $ \bm{w}_\ell^j $ is then $  \xi^2_j = -\frac{a_j^k- \mu_{j, \pm}^k}{b_j^k} \xi^1_j$.
Notice that for each $ \bm{q}^j $ there are two eigenvalues $ \mu_{j, \pm}^k$, for a total of $ 2 (d-1)$ eigenvalues in this class.

\item 
The third class contains the $ nd - n - 2d+2 $ remaining eigenvectors subdivided into two groups: $ \bm{r}_1 = \begin{bmatrix} \epsilon_1 \bv_h^T & \ldots &  \epsilon_{n_1} \bv_h^T & 0 & \ldots & 0 \end{bmatrix}^T$ and $ \bm{r}_2 = \begin{bmatrix} 0 & \ldots & 0  & \varepsilon_1 \bv_h^T & \ldots & \varepsilon_{n_2}  \bv_h^T \end{bmatrix}^T$ where $ \bv_h $ is an eigenvector of $\bV$, $h \neq k $, and $ \sum_{j=1}^{n_1} \epsilon_j =0$, $ \sum_{j=1}^{n_2} \varepsilon_j =0$.
By construction, $ \bv_k^T \bv_h =0 $ and $ (\bm{w}_\ell^j)^T \bv_h=0 $ for all  $ j=1, \ldots, k-1, k+1, \ldots, d $ and $ \ell =1,2$. 
Computing: 
\[
J(\pm \bv_k ) \bm{r}_1
= \begin{bmatrix}  
 \frac{\eta \frac{\alpha_1^k}{\beta_1^k} \lambda_h  \left(\sum_{j=1}^{n_1} \epsilon_j \right) \bv_h +  \epsilon_1 \bv_h}{ 1+ \eta \lambda_k \gamma_1^k} \\ 
\vdots \\ 
 \frac{\eta \frac{\alpha_1^k}{\beta_1^k} \lambda_h  \left(\sum_{j=1}^{n_1} \epsilon_j \right) \bv_h + \epsilon_{n_1} \bv_h}{ 1+ \eta \lambda_k \gamma_1^k} \\ 
\frac{\eta \frac{\alpha_2^k}{\beta_2^k} \lambda_h   \left(\sum_{j=1}^{n_1} \epsilon_j \right) \bv_h }{ 1+ \eta \lambda_k \gamma_2^k}  \\ 
\vdots \\
 \frac{\eta \frac{\alpha_2^k}{\beta_2^k} \lambda_h  \left(\sum_{j=1}^{n_1} \epsilon_j \right) \bv_h }{ 1+ \eta \lambda_k \gamma_2^k} 
 \end{bmatrix} = \frac{1}{1+ \eta \lambda_k \gamma_1^k}  \bm{r}_1
 \]  
and, similarly, $ J(\pm \bv_k ) \bm{r}_2 = \frac{1}{1+ \eta \lambda_k \gamma_2^k} \bm{r}_2$. 
\eenu
\qed

\begin{remark}
As for the multiagent Oja system, in Lemma~\ref{lem:lin-stab-self2} the first class of eigenvectors is ``extrinsic'' and the other two ``intrinsic'', i.e., they belong to the tangent space $ T_{\bx} (\mathbb{S}^{d-1})^n $. Only the last two classes count when deciding the local asumptotic stability properties of the bipartite equilibria. 
\end{remark}

\begin{lemma}
\label{lem:polyg-self}
The polygonal equilibria are all unstable for \eqref{eq:update}.
\end{lemma}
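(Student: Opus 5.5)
The plan is to mimic the proof of Lemma~\ref{lem:moja-polygonal-stab}, replacing the uniform average by the attention-weighted one and accounting for the extra terms coming from $\pde{A_{ij}}{\bx_h}$. Let $\bs=[\bs_1\,\ldots\,\bs_n]$ be a polygonal equilibrium, so that $\by_i(\bs)=\bV\sum_j A_{ij}(\bs)\bs_j=0$ for every $i$. Since $\bV$ is nonsingular (Assumption~\ref{ass:V-sym}), this is equivalent to $\bm{m}_i(\bs)=\sum_j A_{ij}(\bs)\bs_j=0$ for all $i$. Perturb as $\bx_i=\bs_i+\bw_i$ with $\bs_i^T\bw_i=0$ (an intrinsic perturbation). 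Then expand $A_{ij}(\bs+\bw)=A_{ij}(\bs)+\delta A_{ij}$, where $\delta A_{ij}$ is linear in $\bw$ by \eqref{eq:partial-A}. To first order,
\[
\tilde{\bx}_i=\bs_i+\bw_i+\eta\bV\Big(\sum_j A_{ij}(\bs)\bw_j+\sum_j \delta A_{ij}\,\bs_j\Big)+h.o.t.
\]
Because $\by_i(\bs)=0$, the norm is $1+\eta\,\bs_i^T(\cdot)+h.o.t.$, exactly as in Lemma~\ref{lem:moja-polygonal-stab}. Normalizing therefore gives the linearization
\[
\bw_i^+=\bw_i+\eta\,\Pi_{\bs_i}\bV\Big(\sum_j A_{ij}(\bs)\bw_j+\sum_j\delta A_{ij}\bs_j\Big).
\]

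The key simplification concerns the softmax-derivative term. From \eqref{eq:partial-A}, the factor $\sum_j A_{ij}\bs_j$ multiplies the two terms $-\bx_i^T\bQ^T\bK A_{ih}$ and $-\sum_\ell\bx_\ell^T\bK^T\bQ A_{i\ell}$, so both vanish at $\bs$. What remains of $\sum_j\delta A_{ij}\bs_j$ is
\[
\sum_j A_{ij}(\bs)\bs_j\,\bs_i^T\bQ^T\bK\bw_j+\Big(\sum_j A_{ij}(\bs)\bs_j\bs_j^T\Big)\bK^T\bQ\,\bw_i .
\]
These terms do not cancel in general. The linear map is therefore $\bw\mapsto\bw+\eta L\bw$, with $L$ acting on the tangent space $T_{\bs}(\mathbb{S}^{d-1})^n$. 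Instability follows if $L$ has an eigenvalue with positive real part, because then for every $\eta>0$ the eigenvalue $1+\eta\mu$ satisfies $|1+\eta\mu|>1$. The sign of $\mathrm{Re}\,\mu$ does not depend on $\eta$, which is consistent with the statement holding without a step-size restriction.

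To exhibit such an eigenvalue, I would use the trace of $L$ restricted to the tangent space. It suffices to show $\mathrm{tr}(L)>0$, since then some eigenvalue has positive real part. Write $C_i:=\sum_jA_{ij}(\bs)\bs_j\bs_j^T$, which is positive semidefinite. The diagonal blocks of $L$ are
\[
\Pi_{\bs_i}\bV\big(A_{ii}I+\bs_i\bs_i^T\bQ^T\bK A_{ii}+C_i\bK^T\bQ\big)\Pi_{\bs_i}.
\]
The first piece contributes $A_{ii}\,\mathrm{tr}(\Pi_{\bs_i}\bV)$ and the second vanishes because $\Pi_{\bs_i}\bs_i=0$. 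This is where I expect the main obstacle: $\mathrm{tr}\,\bV$ can be negative under Assumption~\ref{ass:V-sym}, and the term $C_i\bK^T\bQ$ has no sign, so the trace argument may fail.

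As a fallback, I would follow the paper's style and project onto $\bv_1$, using a perturbation with $\bw_i=\epsilon\,\Pi_{\bs_i}\bv_1$. This gives a Rayleigh-quotient test $\sum_i \bw_i^T(L\bw)_i>0$. The $\bV$-part then yields $\eta\lambda_1\sum_{i,j}A_{ij}\,\bv_1^T\Pi_{\bs_i}\Pi_{\bs_j}\bv_1$, which is positive for a positive-entry $A(\bs)$ when the $\bs_i$ are not all $\pm\bv_1$. The attention terms I would try to bound using $\bm{m}_i(\bs)=0$, or to treat as in the proof of Lemma~\ref{lem:moja-polygonal-stab}. In writing this I would accept some informality in that step, as the paper does for the Oja case.
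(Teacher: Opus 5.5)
Your linearization is correct, and your simplification of the softmax term is more explicit than the paper's $\Psi$: the two pieces carrying the factor $\sum_j A_{ij}(\bs)\bs_j=0$ drop out. You are also right that the intrinsic Jacobian is exactly $I+\eta L$ with $L$ independent of $\eta$. So instability for every $\eta>0$ amounts to $L$ having an eigenvalue with positive real part. \emph{That is exactly the step you never establish.}

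In the trace route, besides the sign problems you flag, the $A_{ii}\bs_i\bs_i^T\bQ^T\bK$ piece is not traceless. It contributes $A_{ii}\,\bs_i^T\bQ^T\bK\,\Pi_{\bs_i}\bV\bs_i$, because the left projector acts on $\bV\bs_i$, not on $\bs_i$.

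The fallback is not a valid instability test. $\bw^TL\bw>0$ only says that the symmetric part of $L$ has a positive eigenvalue. $L$ is not self-adjoint: its $\bV$-part already gives the bilinear form $\sum_{i,j}A_{ij}(\bs)\,\bz_i^T\bV\bw_j$, which is non-symmetric because $A(\bs)$ is, and the softmax terms add more. So nothing follows about its spectrum; think of $\begin{bmatrix}-1&10\\0&-1\end{bmatrix}$ with $\bw=(1,1)^T$. The quotient is also miscomputed, since $\bV\Pi_{\bs_j}\bv_1=\lambda_1\bv_1-\zeta_1^j\bV\bs_j\neq\lambda_1\Pi_{\bs_j}\bv_1$. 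The positivity of $\sum_{i,j}A_{ij}\langle\Pi_{\bs_i}\bv_1,\Pi_{\bs_j}\bv_1\rangle$ is asserted, not shown. Finally, the attention terms, which are of the same order in $\bw$, are left uncontrolled.

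The paper finishes differently. It projects onto $\bv_1$, reaches \eqref{eq:ode-eta-self}, and invokes $\rho(I+\eta\lambda_1(I-Z_1^2)A(\bs))>1$, a spectral statement about an entrywise positive matrix rather than a quadratic form. That step, however, only goes through for two reasons. The perturbation $\bw_i\parallel\bv_1$ used there is not tangent when $\zeta^i_1\neq0$. And $\Psi\bm{\zeta}_1$, which is linear in $\bw$, is set aside as a bounded term.

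Indeed, the missing step is false in general. Take $\bQ^T\bK=0$, $n=d=2$, $\bV=\diag(1,-2)$, and $\bs_1=-\bs_2=\bs=\cos\theta\,\bv_1+\sin\theta\,\bv_2$ with $\theta\neq0$ small. This is a polygonal equilibrium of \eqref{eq:update}. Let $\bs_\perp$ be the unit vector orthogonal to $\bs$. Then your $L$ has eigenvalues $0$, along the circle of equilibria $(\bu,-\bu)$, $\bu\in\mathbb{S}^1$, and $\kappa=\bs_\perp^T\bV\bs_\perp=\sin^2\theta-2\cos^2\theta<0$. For small $\eta$ the equilibrium is therefore normally attracting, hence Lyapunov stable. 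The same persists for small skew-symmetric $\bQ^T\bK$. At this equilibrium your quotient with $\bw_i=\Pi_{\bs_i}\bv_1$ equals $2\kappa\sin^2\theta<0$, whereas your formula gives a positive value.

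A correct statement needs a nondegeneracy condition. For uniform attention, for example, $L=\frac1n P\bV P^T$ with $P\bu=(\Pi_{\bs_i}\bu)_i$ and $P^T\bw=\sum_i\bw_i$. Its nonzero spectrum is that of $\frac1n\bV\sum_i\Pi_{\bs_i}$. By Sylvester's law of inertia, this has a positive eigenvalue whenever the $\bs_i$ are not all collinear.
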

\proof
The idea of the proof is similar to that used in Lemma~\ref{lem:moja-polygonal-stab} for the multiagent Oja flow. 
A polygonal equilibrium point $ \bx =\bm{s} =\begin{bmatrix} \bm{s}_1 & \ldots & \bm{s}_n \end{bmatrix} \in (\mathbb{S}^{d-1})^n$ is s.t. $ \bV  \sum_{j=1}^n A_{ij}(\bm{s}) \bm{s}_j  =0$. Let us compute the linearization of \eqref{eq:update} at $\bm{s}$, obtained perturbing $ \bm{s}$ with a perturbation $ \bw = [ \bw_1 \ldots \bw_n] $ with  $  \bs_i^T \bw_i =0$.  For the non-normalized next iterate it is
\[
\begin{split}
\tilde{\bx}_i = & \bs_i+\bw_i + \frac{\eta}{n} \bV\sum_j A_{ij}(\bs + \bw) (\bs_j + \bw_j)  \\
= & \bs_i + \eta \underbrace{\bV\sum_j A(\bs) \bs_j}_{=0} + \bw_i + \eta  \bV \sum_j A(\bs) \bw_j \\
& +  \eta  \bV \sum_j \sum_h \pde{A_{ij}(\bs)}{\bw_h} \bw_h \bs_j + h.o.t.
\end{split}
\]
where the vectors $ \pde{A_{ij}(\bm{s})}{\bw_h} $ are computed in \eqref{eq:partial-A}.
Denote $ \psi^{ij} (\bw) := \sum_h\pde{A_{ij}(\bm{s})}{\bw_h}  \bw_h$ and observe that $ \psi^{ij} (\bw) $ is a sum of bilinear forms $ \bs_\ell B_k \bw_h $ for some matrices $ B_k $ (see \eqref{eq:partial-A} for their specific expressions). 
Since $ \| \bm{s}_\ell \|=1$ and $  \bw_h $ is small enough, from the matrix Cauchy-Schwartz inequality, for each of these bilinear forms it holds $ - \| B_k\|_2 \leq \bm{s}_\ell B_k \bw_h \leq \| B_k \|_2 $, where $ \|B_k \|_2 $ is  independent of $ \bw_h$.

Denoting $ \bu_i := \bw_i + \eta \bV  \sum_j \left( \psi_{ij}(\bw) \bs_j + A_{ij}(\bs) \bw_j \right) $, the norm of $ \tilde{\bx}_i $ is
\[
\| \tilde{\bx}_i \| = \Big( 1+ \| \bu_i \|^2 + 2 \eta \bs_i^T \bV \sum_j \left( \psi_{ij}(\bw) \bs_j + A_{ij}(\bs) \bw_j \right)  \Big)^{\frac{1}{2}}
\]
where we have used $ \bs_i^T \bw_i =0 $.
Observe that $ \| \bu_i \|^2 $ is composed entirely of terms of order 2 or higher in $ \bw_i $, hence, using the binomial series expansion ($ (1 + \xi)^{-\frac{1}{2}}\approx 1 - \frac{\xi}{2}  + h.o.t. $), 
\[
\begin{split}
& \bx_i^+ = \frac{\tilde{\bx}_i}{\| \tilde{\bx}_i \|}
\approx (\bs_i + \bu_i) \left(1- \frac{\| \bu_i \|^2}{2} \right. \\
&\qquad  \left. - \eta \bs_i^T \bV \sum_j \left( \psi_{ij}(\bw) \bs_j + A_{ij}(\bs) \bw_j \right) \right)  + h.o.t. \\
&\approx  \bs_i - \eta \bs_i^T \bV \sum_j \left( \psi_{ij}(\bw) \bs_j + A_{ij}(\bs) \bw_j \right) \bs_i  + \bu_i   + h.o.t.\\
& = \bs_i - \eta  \bs_i  \bs_i^T \bV \sum_j \left( \psi_{ij}(\bw) \bs_j + A_{ij}(\bs) \bw_j \right)+ \bw_i\\
& \qquad  + \eta \bV \sum_j \left( \psi_{ij}(\bw) \bs_j + A_{ij}(\bs) \bw_j \right) + h.o.t.\\
& = \bs_i + \bw_i +\eta (I - \bs_i  \bs_i^T ) \bV \sum_j \left( \psi_{ij}(\bw) \bs_j + A_{ij}(\bs) \bw_j \right).
\end{split}
\]
The linearized equation is then 
\[
\bw_i^+ = \bw_i +\eta (I - \bs_i  \bs_i^T ) \bV \sum_j \left( \psi_{ij}(\bw) \bs_j + A_{ij}(\bs) \bw_j \right).
\]
Expanding all $ \bw_i $ and $ \bs_i $ in the basis of eigenvectors $ \bv_1, \ldots, \bv_d $, and denoting $ \bw_i = \sum_\ell \xi^i_\ell \bv_\ell $ and $ \bs_i = \sum_\ell \zeta^i_\ell \bv_\ell $, 
\[
\begin{split}
\bw_i^+ & = \sum_\ell (\xi^i_\ell)^+ \bv_\ell =  \sum_\ell \xi^i_\ell \bv_\ell +\eta \Big( I - \sum_k \zeta^i_k \bv_k \sum_\ell \zeta^i_\ell \bv_\ell^T \Big) \cdot \\
& \quad   \cdot \sum_{j, \ell} \lambda_\ell (\psi_{ij}(\bw) \zeta^j_\ell + A_{ij}(\bs) \xi^j_\ell )  \bv_\ell .
\end{split}
\]

Assuming that the perturbation $ \bw$ is aligned with $ \bv_1$, i.e., for all $i$, $ \xi^i_1 \neq 0 $ and $ \xi^i_k=0$ for $ k=2, \ldots, d$, then, projecting along $ \bv_1$, yields
\[
{\xi^i_1}^+ =\xi^i_1 + \eta \left( 1- (\zeta^i_1)^2 \right) \lambda_1 \Big( \sum_j \psi_{ij}(\bw) \zeta^j_1 +  \sum_j A_{ij}(\bm{s}) \xi^j_1 \Big) .
\]
Denoting  $ \bm{\xi}_1 = \begin{bmatrix} \xi^1_1 & \ldots & \xi^n_1 \end{bmatrix}^T$
and $  \bm{\zeta}_1 = \begin{bmatrix} \zeta^1_1 & \ldots & \zeta^n_1 \end{bmatrix}^T$ the collection of the $ \xi_1^i $ and $ \zeta_1^i $ components of all agents, the previous ODEs can be expressed in vector form as
\beq
{\bm{\xi}_1}^+ =\bm{\xi}_1 + \eta  \lambda_1 \left( I- Z_1^2 \right) \left( \Psi \bm{\zeta}_1 + A(\bm{s} ) \bm{\xi}_1 \right)
\label{eq:ode-eta-self}
\eeq
where $ Z_1 = \diag(\bm{\zeta}_1)$, $ \Psi =[\psi_{ij}]$ is a matrix with lower and upper bounds independent of $ \bw$, and $ \bm{\zeta}_1 $ is fixed. $ A(\bm{s} ) $ is a row stochastic matrix,  $ \rho(A(\bm{s} )) =1 $ and $ A(\bm{s} ) \mathds{1} = \mathds{1}$. 
Since $ |\zeta^i_1 |< 1$ because  $ \bm{s} $ is not an eigenvector of $\bV$ and $ \lambda_1 >0$, it is $\rho(I +\eta \lambda_1(I-Z_1^2) A(\bs))>1 $, and the system \eqref{eq:ode-eta-self} diverges when $ \bm{\xi}_1 = \epsilon \mathds{1}$ for some scalar $ \epsilon$. Hence the polygonal equilibrium $ \bm{s}$ is unstable. 
\qed

\begin{remark}
As for the multiagent Oja system, the perturbation used in Lemma~\ref{lem:polyg-self} is``intrinsic'', i.e.,  $ \bw_i \in T_{\bs_i} \mathbb{S}^{d-1} $ for all $i$. 
\end{remark}

\noindent {\bf Proof of Theorem~\ref{thm:stab-main-self}.}
The proof is the direct combination of Lemmas~\ref{lem:lin-stab-self1},~\ref{lem:lin-stab-self2} and~\ref{lem:polyg-self}, with the only observation that in the second class of eigenvalues of the bipartite consensus case of Lemma~\ref{lem:lin-stab-self2}, using the Schur-Cohen formula for stability of a $ 2 \times 2 $ matrix in discrete time, the condition $ | \mu_{j, \pm}^k | <1$ becomes $  a_j^k d_j^k - b_j^k c_j^k < 1 $ and $ a_j^k + d_j^k <1+ a_j^k d_j^k - b_j^k c_j^k $, $\, \forall \, j=1, \ldots, k-1, k+1,\ldots,  d $.
\qed

\begin{remark}
\label{rem:bip-in-practice-singlehead}
Similarly to the continuous-time case analyzed in \cite{altafini2026multistability}, bipartite consensus equilibria can be locally stable for the single-head discrete-time transformer in \eqref{eq:update}, even though numerically this seems to be a rare event, and to be associated to very small basins of attraction. A simulation example is shown in Fig.~\ref{fig:3Dexamples-bip}.
Notice that under the assumption of Theorem~\ref{thm:stab-main-self}, from the third condition of the theorem ($ \frac{1}{1+ \eta \lambda_k \gamma_i^k} <1$), it is straightforward to deduce that if $ \lambda_k <0 $ is an eigenvalue of $ \bV $, then the associated bipartite equilibria $ \bx_i = \pm \bv_k $ are unstable, because it is $ 1+ \eta \lambda_k \gamma_i^k <1 $ for all admissible values of $ \eta< \min\left( \frac{1}{\lambda_1 } , \, \frac{1}{ | \lambda_d |} \right) $.
\end{remark}

\section{Extensions}
In this section we discuss a few extensions of the results, most notably to the multihead case.

\subsection{Multihead case}
\label{sec:ext-multi-head}

Real transformers are multihead, meaning that different pieces of the token vectors $ \bx_i $ attend to different convex combinations of the other tokens. 
This mechanism is constructed by using multiple query, key and value matrices, and combining them through a fourth weight matrix called output matrix, using an operation normally referred to as a ``concatenation'' \cite{vaswani2017attention}. 
Let $ h=1, \ldots, H $ be the head index. 
If $ W_Q^{(h)}$, $ W_K^{(h)}$, $ W_V^{(h)}$, and $ W_O$ are the learned matrices in the original notation of \cite{vaswani2017attention}, then in our column-token convention the corresponding matrices are obtained by transposition: 
\[
F_Q^{(h)} = (W_Q^{(h)})^T, \quad 
F_K^{(h)} = (W_K^{(h)})^T,\quad 
F_O = W_O^T
\]
\[
F_V^{(1)} = \begin{bmatrix} (W_V^{(1)})^T \\ 0 \\ \vdots \\ 0  \end{bmatrix}, \ldots,  F_V^{(H)} = \begin{bmatrix} 0 \\ 0 \\ \vdots \\ (W_V^{(H)})^T   \end{bmatrix},
 \]
where the zero-padding in $ F_V^{(h)}$ places the nonzero entries into the rows corresponding to the $ h$ head and corresponds to the ``concatenation'' operation of \cite{vaswani2017attention}.
Each head has an associated attention matrix $ A^{(h)}(\bx) $ of components 
\[
A_{ij}^{(h)}(\bx) = \frac{e^{\langle \bQ^{(h)} \bx_i, \bK^{(h)} \bx_j \rangle } }{\sum_{\ell=1}^n e^{\langle \bQ^{(h)} \bx_i, \bK^{(h)} \bx_\ell \rangle } } .
\]
The multi-head case corresponds to the following time-invariant dynamical system
\beq
\label{eq:update-multihead-auton-bis}
\begin{split}
\bx_i(t+1)  = \frac{\bx_i (t) +\eta F_{O} \sum_{h=1}^H F_{V}^{(h)}  \sum_j A_{ij}^{(h)}(\bx(t)) \bx_j (t)  }{\| \bx_i (t) +\eta F_{O} \sum_{h=1}^H F_{V}^{(h)}  \sum_j A_{ij}^{(h)}(\bx(t)) \bx_j  (t) \|}.
\end{split}
\end{equation}
As for the single head model, if we disregard attention (which varies from token to token) the matrix that lumps together the contribution of the $H$ heads occurring in the transformer layer is the following ``layer matrix''
\[
F = F_O  \sum_{h=1}^H F_V^{(h)} .
\] 
Let $ \lambda_1, \ldots, \lambda_d $ be the eigenvalues of $ F $, and $ \bv_1, \ldots, \bv_d $ the associated eigenvectors, normalized s.t. $ \| \bv_j\|=1$.
Under the special case of $ F $ which is symmetric, the asymptotic behavior of the system~\eqref{eq:update-multihead-auton-bis} is described in the following theorem.

\begin{theorem}
\label{thm:symm-case1}
Consider the system~\eqref{eq:update-multihead-auton-bis}. 
Assume the attention matrices $ A^{(h)}$, $ h=1, \ldots, H$, are full and the matrix $ F = F_O \sum_{h=1}^H F_{V}^{(h)} $ is symmetric with $ \lambda_1 > \lambda_i$, $ i=2,\ldots, d$, $ \lambda_1 >0$. Assume further that  $ \eta<  \frac{2}{ |\lambda_1 + \lambda_d |}  $ whenever $ - \lambda_d > \lambda_1 $. Then the consensus point $ \bx_i =\bv_1 $ $\forall \, i=1, \ldots, n$ (or $ \bx_i =-\bv_1 $ $\forall \, i=1, \ldots, n$) is a locally asymptotically stable equilibrium point for the system~\eqref{eq:update-multihead-auton-bis} when $ T\to \infty$. 
\end{theorem}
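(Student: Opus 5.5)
The plan is to reduce the multihead system~\eqref{eq:update-multihead-auton-bis} at the consensus point to the multiagent Oja system~\eqref{eq:update-moja} with $\bV$ replaced by the layer matrix $F$, and then invoke Lemma~\ref{lem:moja-F-eigenvalues} and Lemma~\ref{lem:moja-consensus-stab}. This mirrors the argument of Lemma~\ref{lem:lin-stab-self1} for the single-head case. The argument has three steps.

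\emph{Step 1: equilibrium and attention at consensus.} First I will check that $\bx_i=\bv_1$ $\forall\, i$ is an equilibrium. As in Lemma~\ref{lem:cons-Aij}, at any consensus state every head's attention matrix is uniform: $A^{(h)}(\bx)=\frac{1}{n}\mathds{1}\mathds{1}^T$ for all $h$, because all exponents $\langle \bQ^{(h)}\bv_1,\bK^{(h)}\bv_1\rangle$ coincide within each row. Hence $\sum_j A^{(h)}_{ij}\bx_j=\bv_1$ for every $h$. The drift then becomes $F_O\sum_h F_V^{(h)}\bv_1=F\bv_1=\lambda_1\bv_1$, so $\bx_i^+=\frac{(1+\eta\lambda_1)\bv_1}{|1+\eta\lambda_1|}=\bv_1$.

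\emph{Step 2: Jacobian at consensus.} Next I will differentiate the right-hand side as in Lemma~\ref{lem:Jacob-self}, now with a sum over heads. The Jacobian has two kinds of contributions.
\begin{itemize}
\item Terms with $A^{(h)}_{ih}$: at the uniform attention these assemble exactly into the Oja Jacobian~\eqref{eq:JacobF-moja} with $\bV$ replaced by $F$. This holds because $\sum_h F_OF_V^{(h)}=F$ and $\bv_1^TF=\lambda_1\bv_1^T$ by symmetry.
\item Terms involving $\pde{A^{(h)}_{ij}}{\bx_\ell}$: here I will show that at consensus $\sum_j \bx_j\,\pde{A^{(h)}_{ij}}{\bx_\ell}=0$ for every $h$. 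Since $\bx_j=\bv_1$ for all $j$, this sum equals $\bv_1\sum_j \pde{A^{(h)}_{ij}}{\bx_\ell}$, and $\sum_jA^{(h)}_{ij}\equiv 1$ forces the derivative sum to vanish identically.
\end{itemize}
With the second kind of term gone, $J(\bv_1)$ is exactly the matrix~\eqref{eq:JacobF-moja} built from $F$. This step is where the multihead structure could interfere, and the row-stochasticity identity is what disposes of it; I will state it explicitly, since the paper's single-head proof uses it only implicitly.

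\emph{Step 3: spectrum and stability.} Lemma~\ref{lem:moja-F-eigenvalues} uses only the symmetry of $\bV$, an orthonormal eigenbasis, and $1+\eta\lambda_1>1$. Applied to $F$, it gives the intrinsic eigenvalues $\frac{1+\eta\lambda_h}{1+\eta\lambda_1}$ for $h\ne1$ and $\frac{1}{1+\eta\lambda_1}$; the first class lies outside the tangent space by Remark~\ref{rem:two-classes}. The third class has modulus $<1$ since $\lambda_1>0$. For the second class, Lemma~\ref{lem:moja-consensus-stab} shows that $\frac{1+\eta\lambda_h}{1+\eta\lambda_1}\in(-1,1)$: this is automatic when $|\lambda_d|\le\lambda_1$, and is ensured by $\eta<\frac{2}{|\lambda_1+\lambda_d|}$ when $-\lambda_d>\lambda_1$. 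All intrinsic eigenvalues thus lie strictly inside the unit disk, and Lyapunov's indirect method gives local asymptotic stability. The antipodal consensus $-\bv_1$ follows by the symmetry $\bx\mapsto-\bx$, which preserves each $A^{(h)}$.
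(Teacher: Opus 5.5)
Your proposal is correct and follows the paper's own route. At consensus every head's attention matrix is uniform, so $J(\bv_1)$ reduces to the multiagent Oja Jacobian \eqref{eq:JacobF-moja} with $\bV$ replaced by $F$, and Lemmas~\ref{lem:moja-F-eigenvalues} and~\ref{lem:moja-consensus-stab} then give the result. Your Step 2 identity $\sum_j \bx_j \pde{A^{(h)}_{ij}}{\bx_\ell} = \bv_1 \sum_j \pde{A^{(h)}_{ij}}{\bx_\ell} = 0$ makes explicit what the paper only implies when it says that the attention matrices approach the uniform matrix.
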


\proof Under the assumption of the theorem, the system~\eqref{eq:update-multihead-auton-bis} has the consensus points at the eigenvectors of $F$ as equilibria, as can be verified by an argument similar to Lemma~\ref{lem:self-equil1}. In particular, when $ \bx_i$ approaches a consensus point, the attention matrices of \eqref{eq:update-multihead-auton-bis} still approach a uniform matrix: $ A_{ij}^{(h)}(\bx) \to \frac{1}{n}$, $ h=1, \ldots, H$. Therefore we can use the same argument used in Theorem~\ref{thm:stab-main-self}: at consensus, the system \eqref{eq:update-multihead-auton-bis} has the Jacobian $ J(\bv_k) $ of the multiagent Oja system \eqref{eq:update-moja}, whose eigenvalues were computed in Lemma~\ref{lem:moja-F-eigenvalues}. Consensus aligned with the principal eigenvector $ \bv_1 $ is locally asymptotically stable, while all other consensus points at the other eigenvectors of $F$ are unstable. 
\qed

While the picture for consensus is easily obtainable from that of the single-head case, what happens to the bipartite consensus points is less clear. 
We currently have no (analytical nor numerical) evidence of such points being stable equilibria for \eqref{eq:update-multihead-auton-bis}.

\subsection{Longer step size}
To check stability of the consensus equilibria a condition on the step size is needed only when $ \lambda_d < 0$, and  $ | \lambda_d |> \lambda_1 $, i.e., when the spectral radius of $F$ (or $ \bV$ for the single-head case) is $ | \lambda_d | $, not $ \lambda_1 $. 
Extra conditions on $ \eta $ are required only for checking the stability of the bipartite consensus points. 
These extra conditions are only sufficient, and meant to simplify the expression of the eigenvalues of the Jacobian $ J(\pm \bv_k)$, see Lemma~\ref{lemm:simplif-eta}.
As mentioned above, locally asymptotically stable bipartite consensus equilibria appear to be rare, even in the single-head case.

\subsection{Asymmetric layer (or value) matrix}
\label{sec:asymm-A}
The assumption of symmetric $F $ (or $F_V$ in the single head case) can be significantly relaxed. 
In fact, what really matters to capture the asymptotic behavior of the self-attention dynamics is that the dominant eigenvalue of $F$, $ \lambda_1 $, is real, while $ \lambda_2, \ldots, \lambda_d $ can be any.
This together with a strict dominance condition on $ \lambda_1 $ (i.e., $ \lambda_1 > {\rm Re}(\lambda_j)$, $ j=2, \ldots, d $) is the essence of the Perron-Frobenius theorem extended beyond nonnegative matrices. 
In fact, when the dominance is strict  not much changes in the behavior of the single-head system~\eqref{eq:update} and neither of its multi-head version~\eqref{eq:update-multihead-auton-bis} by considering an $ F$ which is not symmetric.
A numerical analysis of the asymmetric $F$ case is carried out when dealing with the ALBERT transformer shown in Section~\ref{sec:ALBERT}.

\section{Experiments}

\subsection{Small-scale numerical examples}
\label{sec:extra-examples}
In Fig.~\ref{fig:3Dexamples} we show a numerical simulation of the dynamics of the single-head self-attention system \eqref{eq:update} when $ d=3 $ and $ n=10$.
In this case $ \bx_i\in \mathbb{S}^2 $, i.e., we can represent trajectories of the self-attention dynamics on a sphere in $ \mathbb{R}^3$. 
The trajectory in Fig.~\ref{fig:3Dexamples} converges to consensus, aligned with the principal eigenvector $ \bv_1 $ of $ F_V $. 

In Fig.~\ref{fig:3Dexamples-bip}, for the single-head case,  we show instead an example of a bipartite consensus equilibrium which is locally asymptotically stable, as checked analytically using Theorem~\ref{thm:stab-main-self}. In Fig.~\ref{fig:3Dexamples-bip}(a), a small perturbation added to a bipartite equilibrium point aligned with $ \bv_1 $ is reabsorbed. However, the basin of attraction is very small: as soon as a slightly larger perturbation is applied (Fig.~\ref{fig:3Dexamples-bip}(b)), the tokens on one of the two antipodal points migrate to the other point, and consensus is achieved (still aligned with $ \bv_1 $, as expected from Theorem~\ref{thm:stab-main-self}). 
The (very) small basin of attraction of the bipartite stable equilibria appears to be a constant characteristic of these equilibria in our simulations. Analytical calculations of the basin of attraction are prohibitive for this system.

\begin{figure}[ht]
     \centering    
      \begin{minipage}{6cm}
        \includegraphics[trim=2cm 0.5cm 2cm 1cm, clip=true,width=1\textwidth]{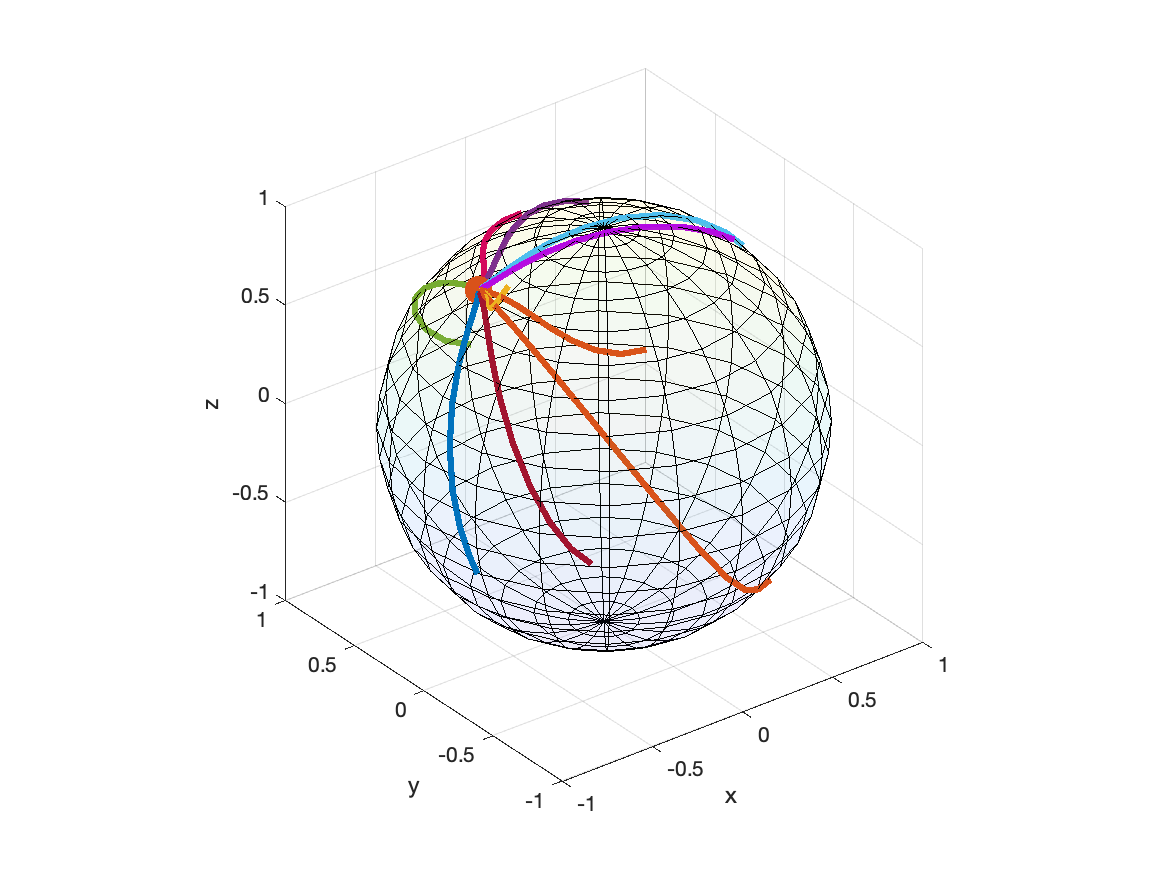}\\
        \includegraphics[trim=0cm 0cm 0cm 0cm, clip=true,width=1\textwidth]{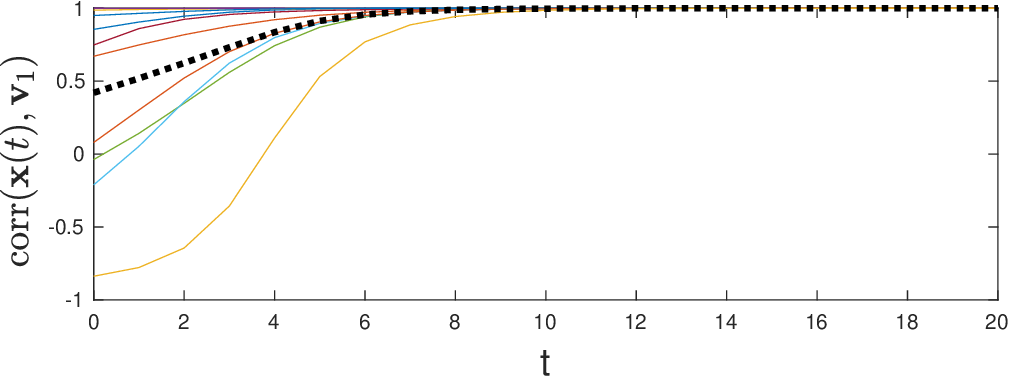}
        \end{minipage}
        \caption{Example of single-head self-attention dynamics with $ d=3$ and $ n=10$. Top panel: the trajectory of the 10 tokens is shown on the sphere $ \mathbb{S}^2 $. The solid dot is the endpoint of a trajectory, i.e., the consensus point reached by all tokens. Lower panel: the correlation $ {\rm corr}(\bx_i, \bv_1) $ tends to 1 as consensus is achieved (all tokens become aligned with the principal eigenvector $ \bv_1 $). The black dotted line is the mean correlation over the 10 tokens.}
        \label{fig:3Dexamples}
\end{figure}

\begin{figure}[ht]
     \centering    
   \subfigure[small perturbation preserves bipartition]{
      \begin{minipage}{6cm}
        \includegraphics[trim=2cm 0.5cm 2cm 1cm, clip=true,width=1\textwidth]{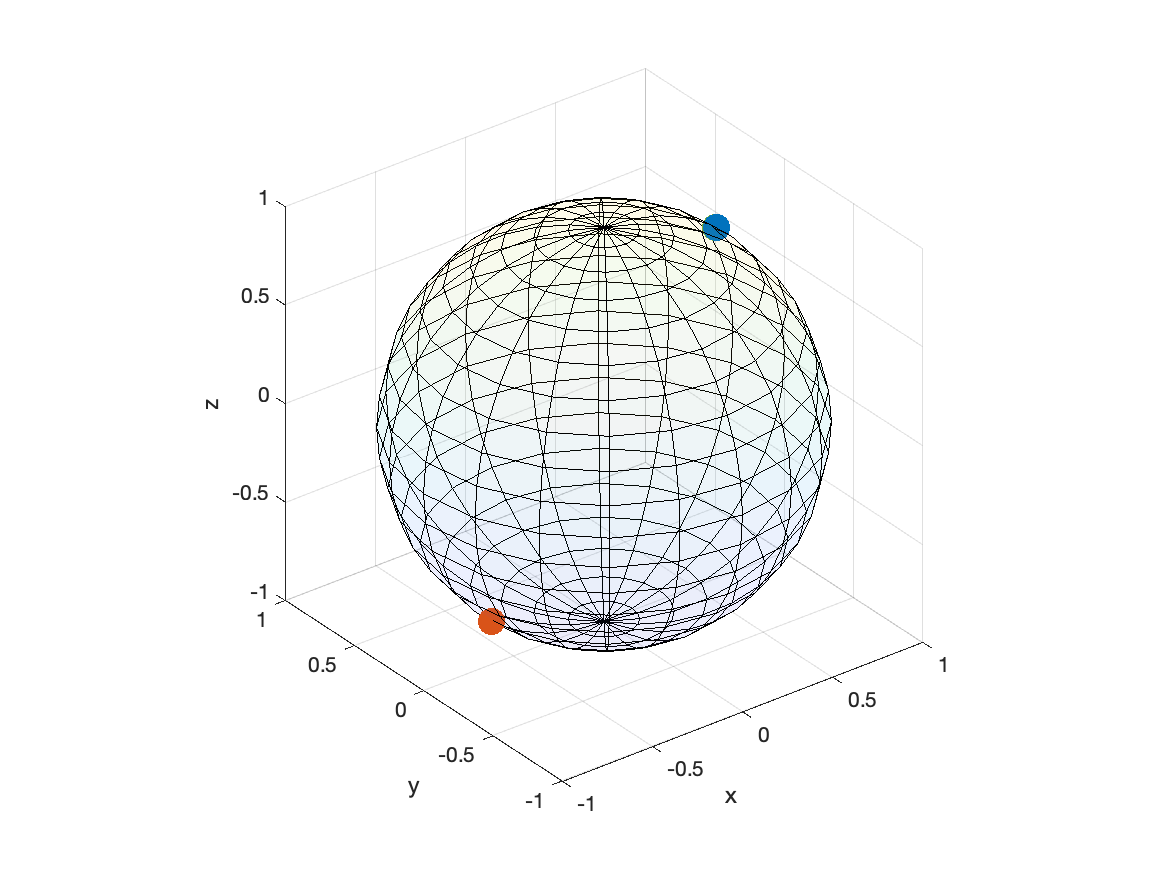}
      \includegraphics[trim=0cm 0cm 0cm 0cm, clip=true,width=1\textwidth]{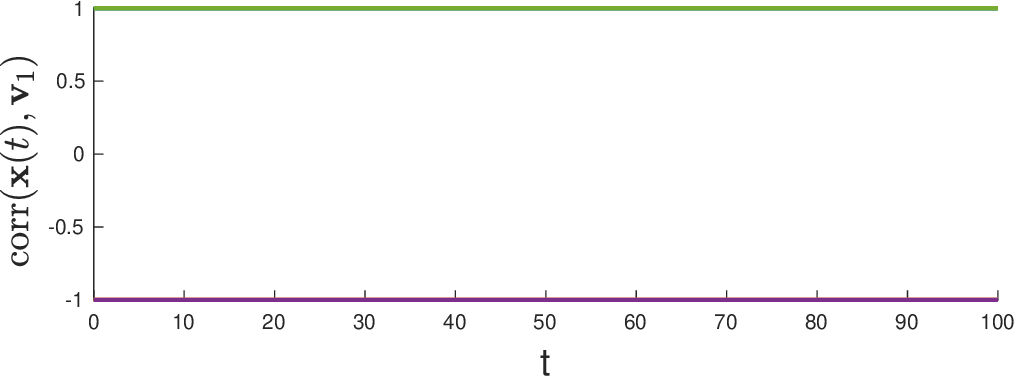}
      \end{minipage}}
      \subfigure[larger perturbation leads to consensus]{
      \begin{minipage}{6cm}
        \includegraphics[trim=2cm 0.5cm 2cm 1cm, clip=true,width=1\textwidth]{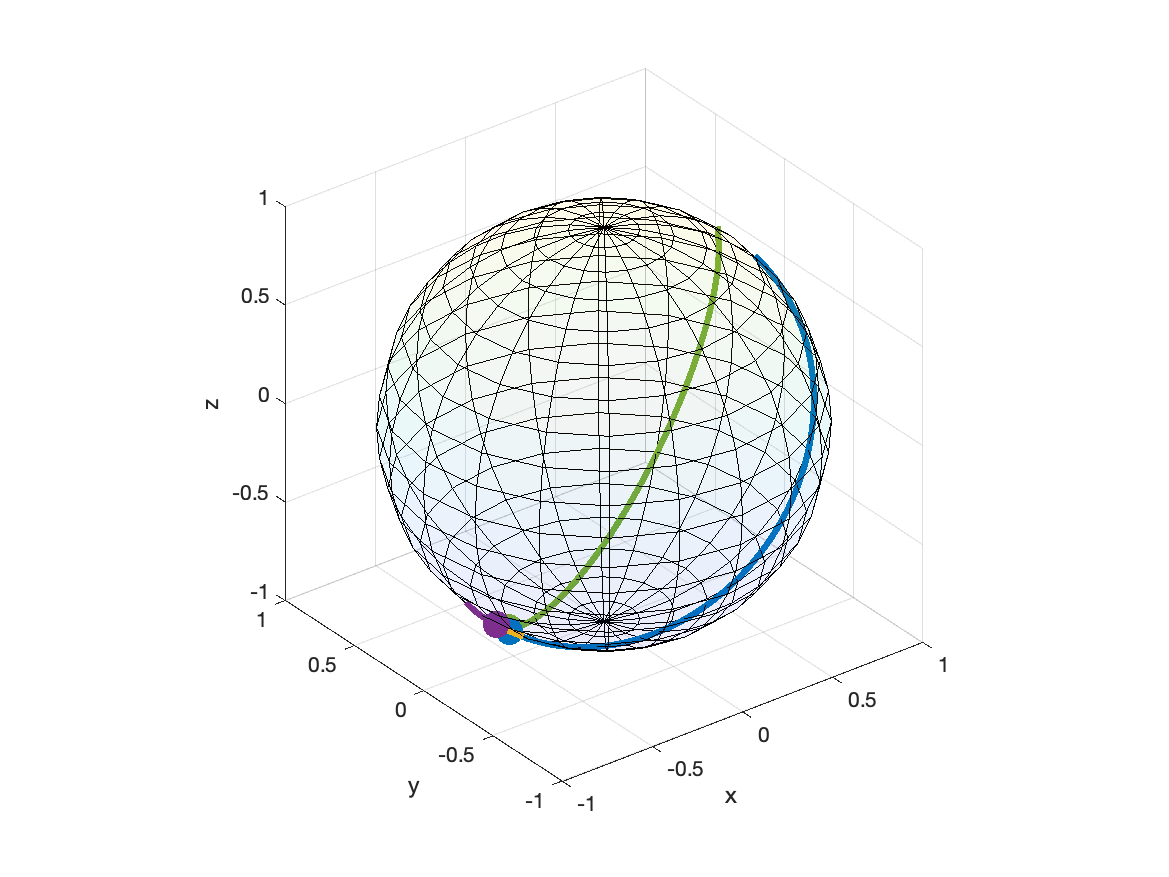}
        \includegraphics[trim=0cm 0cm 0cm 0cm, clip=true,width=1\textwidth]{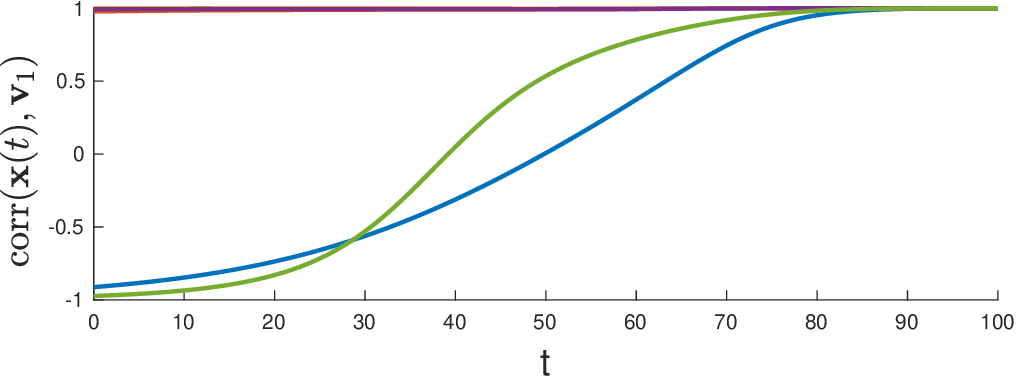}
     \end{minipage}}
        \caption{Example of bipartite consensus with $ d=3$ and $ n=10$. (a): when a very small perturbation is applied to a locally asymptotically stable bipartite consensus point (aligned with $ \bv_1 $), the system returns to the bipartite equilibrium. (b): when a larger perturbation is applied to the same bipartite consensus point, the trajectory escapes towards one of the two the consensus points associated to the principal eigenvector $ \bv_1 $. The two tokens in blue and green ``migrate'' to the antipodal point, thereby achieving consensus.
        }
        \label{fig:3Dexamples-bip}
\end{figure}

\subsection{ALBERT: a shared weight multihead transformer}
\label{sec:ALBERT}

ALBERT  \cite{lan2019albert} is a multihead transformer in which all weights are identical across layers.
The version we consider here, called ALBERT-large, is characterized by $ d=1024$, $H=16$ and $ T=24$. 
For it the principal eigenvalue of $F$, $ \lambda_1 =2.0674$, is real, positive, simple, and strictly dominant, hence the heuristic extension to nonsymmetric weight matrices mentioned in Section~\ref{sec:asymm-A} holds. 
Once we disregard the FFN part and we consider an $ L_2 $ normalization,  \eqref{eq:update-multihead-auton-bis} holds.

In the simulations we consider a small number of tokens $ n=10$ and the time horizon $T =24$ given by the real number of layers. 
Fig.~\ref{fig:ALBERT}(a) shows that over $ 10^3 $ experiments, the intertoken distance $ \| \bx_i - \bx_j \| $ tends to vanish rather quickly, meaning the tokens align with each other. 
Fig.~\ref{fig:ALBERT}(b) and~(c) show that the tokens align also with the principal eigenvector $ \bv_1 $ of $F$, even though this alignment is not complete within a time horizon equal to the number of layers ($T=24$). 
Consensus can be towards $ \bv_1 $ or $ - \bv_1 $, see Fig.~\ref{fig:ALBERT}(c).

\begin{figure}[ht]
 \centering    
   \subfigure[]{
      \begin{minipage}{6cm}
        \includegraphics[trim=0cm 7cm 0cm 0cm, clip=true,width=1\textwidth]{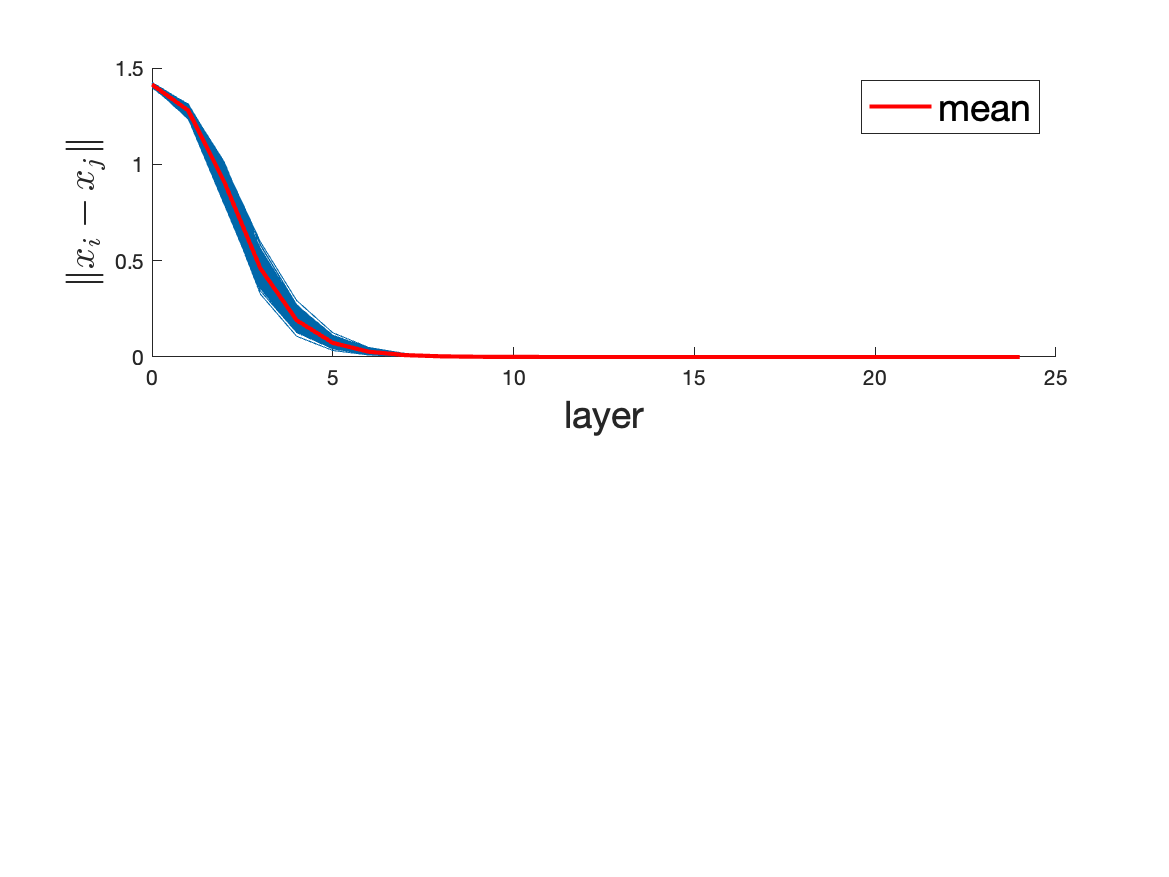}
     \end{minipage}}
   \subfigure[]{
      \begin{minipage}{6cm}
      \includegraphics[trim=0cm 0cm 0cm 0cm, clip=true,width=1\textwidth]{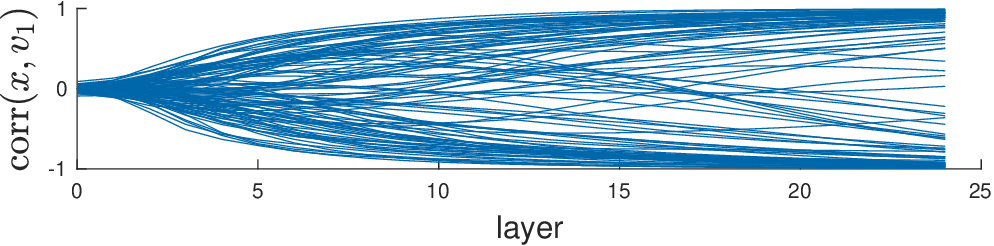}
      \end{minipage}}
   \subfigure[]{
      \begin{minipage}{6cm}
      \includegraphics[trim=0cm 7cm 0cm 0cm, clip=true,width=1\textwidth]{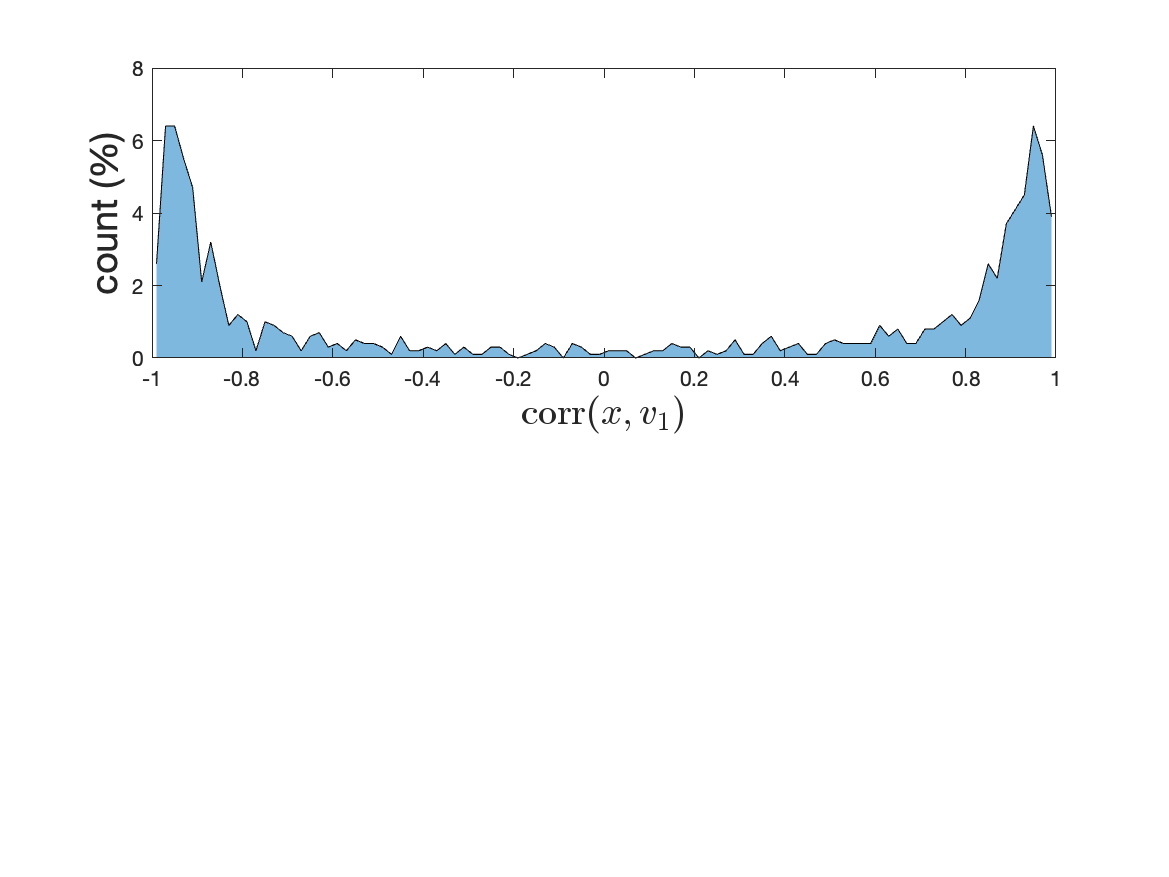}
      \end{minipage}}
        \caption{Example of real shared-weight transformer: ALBERT. (a): Inter-token distance for $ 10^3 $ experiments. (b): Correlation between $ \bx_i(t) $ and $ \bv_1 $ for $ t=1, \ldots, 24$ over multiple experiments. 
        (c): Histogram of the correlation between $ \bx_i(T) $ and $ \bv_1 $ at $T=24$ over $ 10^3 $ experiments. 
        }
        \label{fig:ALBERT}
\end{figure}

\section{Conclusions}
This paper provides a thorough analysis of the asymptotic behavior of a discrete-time self-attention dynamical system, meant to represent an idealized transformer. The locus of equilibria and their stability properties are investigated and confirmed to be analogous to their continuous-time counterparts. The analysis provides insight into the convergence properties of this model: the consensus point always aligns itself with the direction of the principal eigenvector of the value matrix, including in the multihead case (where the value matrix is replaced by what we call the layer matrix). These predictions are confirmed by investigating the behavior of a real shared weight transformer. 

 \section{Acknowledgments} The author would like to thank Chenglong Li for providing the weigh matrices of the ALBERT transformer. 


\end{document}